\documentclass{article}
\usepackage[english]{babel}
\usepackage[letterpaper,top=2cm,bottom=2cm,left=3cm,right=3cm,marginparwidth=1.75cm]{geometry}

\usepackage{amsmath}
\usepackage{amssymb}
\usepackage{graphicx}
\usepackage{bm}
\usepackage[dvipsnames]{xcolor} 
\usepackage{mathtools}
\usepackage{amsthm} 
\newtheorem{thm}{Theorem}
\newtheorem{lem}{Lemma}
\newtheorem{prop}{Proposition}

\newtheorem{rem}{Remark}
\newtheorem{ass}{Assumption}

\newcommand{\Rz}{\mathbb{R}}
\newcommand{\Nz}{\mathbb{N}}
\newcommand{\Xvec}{{\mathbf X}_{\bullet,\bullet}}
\newcommand{\Yvec}{{\mathbf Y}_{\bullet,\bullet}}
\newcommand{\uvec}{{\mathbf u}_{\bullet,\bullet}}
\newcommand{\Rbig}{{\mathbf R}^{-1}}
\newcommand{\wPhi}{\widehat\Phi_{m}}
\newcommand{\wPsi}{\widehat\Psi_m^{(R)}}
\newcommand{\ePsi}{\Psi_m^{(R)}}
\newcommand{\E}{\mathbb E}

\usepackage[colorlinks=true, allcolors=OliveGreen]{hyperref}
\usepackage{cleveref}

\usepackage{natbib}
\renewcommand{\cite}{\citet}

\title{Adaptive nonparametric regression from repeated measurements under common noise}
\author{{\sc Fabienne Comte}\thanks{Universit\'e Paris Cit\'e, MAP5 UMR 8145, F-75006 Paris, France, e-mail:
  \url{fabienne.comte@u-paris.fr}} \ , { \sc Bianca Neubert}\thanks{Ruprecht-Karls-Universität Heidelberg, Institut f\"ur Mathematik, M$\Lambda$THEM$\Lambda$TIKON, Im Neuenheimer Feld 205,
  D-69120 Heidelberg, Germany, e-mail:
  \url{neubert@math.uni-heidelberg.de}}}
\date{}

\begin{document}
\maketitle
\begin{abstract}
We consider nonparametric estimation of the regression function in a model where individuals share a common noise component and repeated measurements are available for each individual. We propose a projection estimator which minimizes a least-squares contrast that accounts for the covariance structure resulting from the common noise. We analyze its risk measured either as the expectation of the empirical norm or as the expectation of the theoretical norm associated with the contrast. We discuss how the number of repeated measurements affects the estimation rates in the common noise model, and precisely characterize the dependence on the number of repetitions. In addition, we propose a data-driven projection estimator and establish risk bounds in terms of the expected empirical norm. The results are illustrated with some simulation experiments.
\end{abstract}
{\footnotesize
\begin{tabbing} 
\noindent \emph{Keywords:} \= Common noise, Data-driven estimation, Highly dependent data, Nonparametric regression estimation,\\\hspace{4.7em}  Repeated measurements\\[.2ex] 
\end{tabbing}}

\section{Introduction}

In this paper, we consider the following model
\begin{equation}\label{model} Y_{i,j} = b(X_{i,j}) + \sqrt{\rho}\varepsilon_{0,j} + \sqrt{1-\rho}\varepsilon_{i,j}
\end{equation}
for $i=1, \dots, n$ and $j=1, \dots, J$, which means that for each individual $i$ we have $J$ repeated observations of the model. The $X_{i,j}$ are independent and identically  distributed (i.i.d.) real-valued random variables with density $f$, the $\varepsilon_{i,j}$ are i.i.d. and centered, with variance $\sigma^2>0$ and independent of the $(X_{i,j}: i=1,\ldots n; j=1,\ldots,J)$. The parameter $\rho\in (0,1)$ describes the correlation structure of the noise and is assumed to be known.  The regression function $b\colon\Rz \rightarrow\Rz$ is unknown and needs to be estimated. We assume that $b$ is square integrable and prove an upper bound on the risk of an adaptive nonparametric projection estimator. 

The model is simple and interesting in itself, and refers to a context where a structural noise $\varepsilon_{0,j}$ is common to all individuals $i$, in addition to the standard one $\varepsilon_{i,j}$, and repeated measurements $j=1, \dots, J$ are available for each individual. However, the truth is that we were first interested in model (\ref{model}) as a simplified version of a Stochastic Differential Equation (SDE) model intensively studied in recent literature of probability and statistical inference for stochastic processes.   The model arose as a way to capture the effect of some random external force influencing simultaneously all individuals $i$ such as environment or constraints of energetic or economic type. This idea has been developed in finance (see {\it e.g.} \cite{duellmann2010estimating}) or  in the theory of mean field games and control, for models of SDEs with common noise (see {\it e.g.} \cite{carmona2016mean}, \cite{lacker2022superposition}, \cite{maillet2025notelongtimebehaviourstochastic}, \cite{delarue2024ergodicity}). Indeed, these authors consider a system of SDEs of the form:
\begin{equation*} 
dX_i(t)= b(X_i(t)) \,dt + \sigma(\sqrt{\rho}dW_0(t) +\sqrt{1-\rho}dW_i(t)),\quad X_i(0)=x, \quad i=1, \ldots,n, 
\end{equation*}
where $W_0, W_1, \ldots, W_n$ are $n+1$ independent Brownian motions. Thus, $W_0$ is common to all equations and this creates identically distributed but correlated processes.

Now, imagine that you have discrete time observations of the $n$ paths, $X_i(t_j)$, for $t_j=j/J$, and $i=1, \dots, n$. Then setting $Y_{i,j}=J(X_i(t_{j+1})-X_i(t_j))$, for $J$ large enough, the approximation 
$$Y_{i,j}\simeq b(X_i(t_j)) + \sqrt{\rho}\sigma J (W_0(t_{j+1})-W_0(t_j)) + \sqrt{1-\rho} \sigma J(W_i(t_{j+1})-W_i(t_j)),$$
gives a regression model. This leads to the simplified model described in (\ref{model}).

As already mentioned, Model (\ref{model}) is worth being studied as an interesting extension of the standard regression problem. That is, the estimation of the regression function in the nonparametric setting has a long history, and we refer to \cite{TsyBook} or  \cite{EfroBook} for general presentations of the topic and the associated estimation strategies. However, the literature mainly deals with the case $\rho=0$.  The present model involves a specificity in the strong dependency involved by the common noise feature corresponding to $\rho\neq 0$, which makes results related to classical dependent settings inapplicable, see  \cite{CaronDedMic} and the references therein. Indeed, in \cite{CaronDedMic}, the spectral radius of the noise variance matrix must be uniformly bounded, which is not the case in our model (it has  order $O(n)$, see our matrix $R$ below in Section \ref{sec::general-notation}); or the correlation between error terms must tend to 0, which is not the case here neither; or the variance of the sum of noises must be at most of order $nJ$, while here, it is equal to $\sigma^2 (nJ)(n \rho + 1-\rho)$, which still tends to infinity when divided by $nJ$. Regression models with dependent noise have also been considered in \cite{CM95}, \cite{BCV2001} or \cite{FP2007} but the present fixed dependency violates all standard assumptions and in particular the ones in these papers (decreasing correlations in \cite{CM95}, decreasing mixing coefficients in \cite{BCV2001} or uncorrelated noise in \cite{FP2007}). 

A previous paper considers a similar context and introduces tools which are reused here:  \cite{comte2025non} study model (\ref{model}) with $J=1$, and build a nonadaptive projection estimator. In the present work, we generalize their results to $J\geq 1$. In some sense, we go from $n$ to $nJ$ observations and everything may at first sight be changed in accordance. But this is not exactly the case. Indeed, the risk bound is modified from a decomposition into the squared bias and a variance term of order dim$(S_m)/n$ to the squared bias and a variance term of order dim$(S_m)/(nJ)$, where $S_m$ denotes the finite dimensional projection space on which the estimator is built. But the stability condition associated to the matrix to be inverted for evaluating the least-squares estimator (see \cite{CohenDavLev}) only changes from $n$ to $n\sqrt{J}$ and not $nJ$, roughly speaking. The fact that different rates arise is somehow in accordance with parametric results obtained in  \cite{GL2026}; therein, if $b=b_\theta$ and the noise is re-parameterized by $\sigma( \sqrt{\rho}dW_0(t)+\sqrt{1-\rho}dW_i(t))=\sqrt{v}(dW_i(t)+ \sqrt{c} dW_0(t))$,  then the parameters $(v,c,\theta) $ are estimated at rates $(\sqrt{nJ}, \sqrt{J}, \sqrt{n})$, the variance of Brownian increments $W_i(t_{j+1})-W_i(t_j)$ being equal to $1/J$.  It is worth noticing that having $J$ repetitions also solves in part the identifiability problem encountered in \cite{comte2025non} for ${\mathbb E}[b(X_1)]\neq 0$, see the discussion in Section \ref{Identif}. We may underline that the generalization of the deviation underlying the control of the risk of the estimator is not straightforward and requires additional work. In addition to this generalization, we propose an adaptive procedure and we prove a risk bound, which requires the study of a new empirical process. This is also novel for the case $J=1$, for which no adaptive estimator has been examined yet.

The plan of the paper is the following. We first describe in Section \ref{sec::estimator-def} the notation, the projection spaces (models) and the contrast. Our nonparametric estimator is the minimizer on a model of this new contrast. These first elements allow us to discuss the identifiability of Model (\ref{model}). The empirical and theoretical risks are upper bounded in Section \ref{sec::risks}. The first bound expressed with respect to a random distance is quite straightforward. The risk bound expressed in term of the integrated version of the empirical norm requires more technicalities to prove that the two norms are equivalent with probability close to 1 (see Lemma \ref{TheResult} as a step to obtain Theorem \ref{thm::theoretical-risk}). A model selection procedure is defined in Section \ref{Adaptive} and the resulting estimator is proved to be adaptive (see Theorem \ref{Adapt}). It also relies on Lemma \ref{TheResult} but, in addition,  on the study of an empirical process involving dependency (Lemma \ref{lem::bound-emp-process}). Section \ref{Simulations} gives an account of simulation experiments that also provide interesting results and discussions. Most proofs are relegated to Section \ref{proofs}.

\section{Model and minimal contrast estimator}\label{sec::estimator-def}

\subsection{General notation}\label{sec::general-notation}
We consider the model introduced in \eqref{model}. Before defining the estimator of regression function $b$ in the next section, let us introduce some notation. First, define the ${\mathbb R}^n$-valued vectors 
$${\mathbf X}_{\bullet,j}:=(X_{i,j})_{1\leq i\leq n}, \quad {\mathbf Y}_{\bullet,j}:=(Y_{i,j})_{1\leq i\leq n}.$$
Further set
\begin{align*}
    \Xvec := \left( \begin{array}{c}   {\mathbf X}_{\bullet,1} \\ \vdots \\   {\mathbf X}_{\bullet,J}
    \end{array} \right) \in \Rz^{nJ}, \quad   \Yvec := \left( \begin{array}{c}   {\mathbf Y}_{\bullet,1} \\ \vdots \\   {\mathbf Y}_{\bullet,J}
    \end{array} \right) \in \Rz^{nJ}.
\end{align*}
Set ${\mathbf u}_{\bullet,j}:=(\sqrt{\rho}\varepsilon_{0,j} + \sqrt{1-\rho}\varepsilon_{i,j})_{1\leq i\leq n}\in \Rz^{n}$ and
\begin{align*}
    \uvec := \left( \begin{array}{c}   {\mathbf u}_{\bullet,1} \\ \vdots \\   {\mathbf u}_{\bullet,J}
    \end{array} \right).
\end{align*}
Consequently, Model \eqref{model} can also be written as
\begin{align*}
     \Yvec = b(\Xvec) + \uvec.
\end{align*}
Note that the $({\mathbf u}_{\bullet,j})_{1\leq j\leq n}$ are i.i.d. vectors but elements within vectors are not independent. To describe the dependence structure resulting from the common noise component, define 
\begin{align*}
    \operatorname{var}({\mathbf u}_{\bullet,1})/\sigma^2  := {R} := \left( \begin{array}{c c c} 1 & & \rho\\ & \ddots & \\ \rho & & 1   
    \end{array} \right)\in \Rz^{n\times n}.
\end{align*}
Then, for the matrix
\begin{align*}
    \mathbf{R} := \left( \begin{array}{c c c} R & & 0\\ & \ddots & \\ 0 & & R   
    \end{array} \right) \in \Rz^{nJ \times nJ}
\end{align*}
it holds $\operatorname{var}(\uvec)  = \sigma^2 \mathbf{R}$. The eigenvalues both of $R$ and $\mathbf{R}$ are $1-\rho$ and $1 + (n-1)\rho$ with multiplicity $n-1$ and 1 in $R$ respectively. Consequently, for any $\rho\in(0,1)$ both matrices are positive definite. The inverse matrices are given by 
\begin{align*}
     {R}^{-1} := \left( \begin{array}{c c c} \alpha_n & & \beta_n \\ & \ddots & \\ \beta_n & & \alpha_n   
    \end{array} \right), \quad\text{with}\quad \alpha_n := \frac{1+ (n-2)\rho}{(1-\rho)(1+ (n-1)\rho)}, \quad\beta_n := -\frac{\rho}{(1-\rho)(1 + (n-1)\rho)}.
\end{align*} For ${\rm Id}_n$ the $n\times n$ identity matrix and ${\mathbf 1}_n$ the $n$-dimensional vector with all coordinates equal to 1, this matrix can be written as
\begin{equation}\label{InvRsimple}
R^{-1}=(\alpha_n-\beta_n){\rm Id}_n +\beta_n {\mathbf 1}_n {\mathbf 1}_n^\top =\frac  1{1-\rho}\left( {\rm Id}_n - \frac{\rho}{1+(n-1)\rho}{\mathbf 1}_n {\mathbf 1}_n^\top\right). 
\end{equation}
In addition, it  holds that
\begin{align*}
    \Rbig := \left( \begin{array}{c c c} R^{-1} & & 0\\ & \ddots & \\ 0 & & R^{-1}   
    \end{array} \right)\in \Rz^{nJ \times nJ}.
\end{align*}
The corresponding eigenvalues of $R^{-1}$ and $\Rbig$ are
\begin{align*}
    \alpha_n-\beta_n = \frac{1}{1-\rho} \quad\text{and}\quad c_n := \frac{1}{1+(n-1)\rho}.
\end{align*}

\subsection{Minimal contrast estimator}\label{sec::minimal-constrast}
Let us introduce a minimal contrast estimator for the regression function $b\in\mathbb{L}^2(A)$ of Model \eqref{model}, with $A\subset {\mathbb R}$. For this, we consider an orthonormal basis $(\varphi_k)_{k\geq 1}$ of $\mathbb{L}^2(A)$ and associate to the basis a collection of models 
$$S_m:= \text{span}\{\varphi_1,\ldots, \varphi_m\}, \quad m\in {\mathbb N}.$$ 
The idea is to build an estimator $\widehat b_m=\sum_{k=1}^m \widehat a_k \varphi_k$ by considering a generalized least squares contrast given for $t\in S_m$  by
\begin{align*}
    \gamma_{n,J}(t)    &:=\frac{1}{nJ} \left( t(\Xvec)^\top \Rbig t(\Xvec) - 2 \Yvec^\top \Rbig t(\Xvec) \right).
\end{align*} 
First, let us introduce further notation. For $j\in\{1,\ldots,J\}$,  set $\widehat \Phi_{m,j} :=(\varphi_k(X_{i,j}))_{1\leq i\leq n, 1\leq k\leq m}\in {\mathbb R}^{n\times m}$ and define
\begin{align*}
    \wPhi := \left( \begin{array}{c}   \widehat\Phi_{m,1}  \\ \vdots \\    \widehat\Phi_{m,J} 
    \end{array} \right) \in \Rz^{nJ\times m}
\end{align*}
and for $\widehat \Psi_{m,j}^{(R)} := \frac 1n \widehat\Phi_{m,j}^\top R^{-1} \widehat\Phi_{m,j}\in \Rz^{m \times m}$ we set
\begin{align*}
    \wPsi &:= \frac 1J\sum_{j=1}^J \widehat\Psi_{m,j}^{(R)} = \frac{1}{nJ} \wPhi^\top \Rbig \wPhi \in\Rz^{m\times m}.
\end{align*}
The standard minimization of $\gamma_{n,J}$ over $S_m$ leads to the vector of coefficients $\widehat{\textbf a}_{m}=(\widehat{a}_1,\ldots,\widehat{a}_m)\in\Rz^m$ such that
$$\wPsi \widehat{\textbf a}_{m} = \widehat Z_m$$
with $\widehat Z_{m,j}:=\frac 1n \widehat\Phi_{m,j}^\top R^{-1}{\mathbf Y}_{\bullet,j}\in \Rz^{m}$ and 
$$\widehat Z_m:=\frac 1J\sum_{j=1}^J \widehat Z_{m,j} = \frac{1}{nJ} \wPhi^\top \Rbig \Yvec \in \Rz^{m}.$$
Consequently, if $\wPsi$ is invertible, we obtain
\begin{align*}
    \widehat{\textbf a}_{m} =  (\wPsi)^{-1}  \frac{1}{nJ} \wPhi^\top \Rbig \Yvec
\end{align*}
which leads to the proposed estimator $\widehat{b}_m := \sum_{k=1}^m \widehat{a}_k \varphi_k$ of $b$ in the regression model. 

\begin{rem}\label{rem::projection}
Observe that it holds $\wPhi \widehat{\textbf a}_m = \widehat{b}_m(\Xvec)$ and 
\begin{align*}
    P :=\frac{1}{Jn}  \wPhi (\wPsi)^{-1} \wPhi^\top \Rbig \in\Rz^{nJ \times nJ}
\end{align*}
satisfies $P = PP$ and $P^\top \Rbig= \Rbig P$. Consequently, $P$ defines an orthogonal projection on $\Rz^{nJ}$ with respect to the scalar product induced by $\Rbig$.
\end{rem}

\begin{rem} Note that, as more specifically developed in \cite{comte2025non}, using the representation in
(\ref{InvRsimple}), for $n$ large enough, $R^{-1}$ is equivalent to  
$$\frac  1{1-\rho}\left( {\rm Id}_n +  \frac 1n {\mathbf 1}_n^\top {\mathbf 1}_n\right),$$
and since the matrix appears twice in the formula of $\widehat{\textbf a}_{m}$, the factor $1/(1-\rho)$ cancels. Thus, in the definition of    $\widehat{\textbf a}_{m}$ the matrix $R^{-1}$ can be replaced by ${\rm Id}_n +  (1/n) {\mathbf 1}_n^\top {\mathbf 1}_n$, which does not require the knowledge of $\rho.$
\end{rem}

\subsection{About level identifiability}\label{Identif}
Now, we discuss the constraints implicitly contained in the invertibility assumption on $\wPsi$. For this, we first introduce norms adequate for the  model considered in this work.

For $t\in S_m$ with $t= \sum_{k=1}^m a_k \varphi_k$ for $\mathbf{a} = (a_1,\ldots, a_m)^\top \in\Rz^m$, the extended empirical norm of the problem is 
$$\|t\|_{n,J,R}^2 =\frac 1J\sum_{j=1}^J \|t\|_{n,R,j}^2, \quad   
\|t\|_{n,R,j}^2=\frac 1n t({\mathbf X}_{\bullet,j})^\top R^{-1} t({\mathbf X}_{\bullet,j})
= {{\mathbf a}}^\top\widehat\Psi_{m,j}^{(R)} {{\mathbf a}}.$$
It holds that $t(\Xvec) =  \wPhi \mathbf{a}$. Consequently, the empirical norm can also be rewritten as
\begin{align*}
    \|t\|_{n,J,R}^2 = \frac{1}{Jn} t(\Xvec)^\top \Rbig t(\Xvec) = \mathbf{a}^\top \frac{1}{Jn} \wPhi^\top \Rbig \wPhi \mathbf{a} = \mathbf{a}^\top \wPsi \mathbf{a}.
\end{align*}
The corresponding theoretical counterpart is defined as $\Vert t \Vert_{R}^2 := \E[\|t\|_{n,J,R}^2]$. Consequently, setting $\ePsi = \E[\wPsi]$,  it also holds that $$\Vert t \Vert_{R}^2 =\mathbf{a}^\top \ePsi \mathbf{a}.$$ 
Moreover, the following Lemma provides a link between $\wPsi$ and the covariance matrix.
\begin{lem}\label{linkCov}
    For any $t\in S_m$ with $t= \sum_{k=1}^m a_k \varphi_k$ and  $\mathbf{a} = (a_1,\ldots, a_m)^\top \in\Rz^m$, for all $J\geq 1$, it holds that
    \begin{align*}
        \Vert t \Vert_{n,J,R}^2 = \mathbf{a}^\top \wPsi \mathbf{a}  \xrightarrow{n\rightarrow +\infty} \frac{1}{1-\rho} \mathbf{a}^\top \Psi_m \mathbf{a}, \quad\text{almost surely},
    \end{align*}          
    where 
    \begin{equation*}
        \Psi_m := \left(\operatorname{cov}(\varphi_{k_1}(X_{1,1}), \varphi_{k_2}(X_{1,1}))\right)_{1\leq k_1,k_2\leq m}.
    \end{equation*}
\end{lem}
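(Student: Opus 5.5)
Since $J$ is fixed and $\wPsi = \frac 1J\sum_{j=1}^J \widehat\Psi_{m,j}^{(R)}$, it suffices to show that for each fixed $j$, $\mathbf{a}^\top \widehat\Psi_{m,j}^{(R)}\mathbf{a} \to \frac{1}{1-\rho}\mathbf{a}^\top\Psi_m\mathbf{a}$ almost surely as $n\to\infty$. Averaging $J$ almost surely convergent sequences with a common limit then gives the claim. We fix $j$ and write $t=\sum_k a_k\varphi_k$, so that $\mathbf{a}^\top \widehat\Psi_{m,j}^{(R)}\mathbf{a} = \frac1n t(\mathbf X_{\bullet,j})^\top R^{-1} t(\mathbf X_{\bullet,j})$.

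Using the representation \eqref{InvRsimple} of $R^{-1}$, we will write
\begin{align*}
\frac1n t(\mathbf X_{\bullet,j})^\top R^{-1} t(\mathbf X_{\bullet,j})
= \frac{1}{1-\rho}\left( \frac1n\sum_{i=1}^n t(X_{i,j})^2 - \frac{n\rho}{1+(n-1)\rho}\Big(\frac1n\sum_{i=1}^n t(X_{i,j})\Big)^2\right),
\end{align*}
since $\mathbf 1_n^\top t(\mathbf X_{\bullet,j}) = \sum_i t(X_{i,j})$ and the factor $\frac1n\cdot(\sum_i t(X_{i,j}))^2 = n(\frac1n\sum_i t(X_{i,j}))^2$. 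The coefficient satisfies $\frac{n\rho}{1+(n-1)\rho}\to 1$ for $\rho\in(0,1)$.

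Next we apply the strong law of large numbers to the i.i.d.\ variables $X_{1,j},\dots,X_{n,j}$. Assuming $\E[t(X_{1,1})^2]<\infty$, which holds for instance when $f$ is bounded on $A$ since $t\in\mathbb L^2(A)$, we obtain $\frac1n\sum_i t(X_{i,j})^2\to \E[t(X_{1,1})^2]$ a.s. Integrability of $t(X_{1,1})$ then follows, and $\frac1n\sum_i t(X_{i,j})\to\E[t(X_{1,1})]$ a.s. Combining these limits, the expression tends a.s.\ to $\frac1{1-\rho}\big(\E[t(X_{1,1})^2]-(\E[t(X_{1,1})])^2\big)=\frac{1}{1-\rho}\operatorname{var}(t(X_{1,1}))$. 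By bilinearity of covariance, $\operatorname{var}(\sum_k a_k\varphi_k(X_{1,1}))=\mathbf a^\top\Psi_m\mathbf a$, which identifies the limit.

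There is no real obstacle here. The only points needing care are the exact algebra of the rank-one correction, namely that its coefficient tends to $1$ so that the empirical mean squared is subtracted in full, and the moment condition $\E[\varphi_k(X_{1,1})^2]<\infty$ required for the SLLN. This condition is implicit in the definition of $\Psi_m$ and would be stated explicitly, for example via $f$ bounded or $\varphi_k$ bounded.
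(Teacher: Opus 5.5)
Your proof is correct and follows the same route as the paper: you reduce to a fixed $j$ and then average the $J$ almost surely convergent terms. The only difference is that the paper obtains the single-$j$ limit by citing Proposition 2 of \cite{comte2025non}, whereas you derive it directly from \eqref{InvRsimple} and the strong law of large numbers, which makes your argument self-contained and makes the factor $1/(1-\rho)$ explicit (the single-$j$ limit quoted in the paper's one-line proof omits this factor).
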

Note that the theoretical objects $\ePsi$, $\Vert \cdot \Vert_{R}$ and $\Psi_m$ are equivalent to the objects of \cite{comte2025non} with the same names. Consequently, results about these objects can be directly used. However, empirical objects depend on $J$ and care must be taken to extend results.

Lemma \ref{linkCov} shows the correspondence between the empirical matrix  $\wPsi$ and $\Psi_m$, which is not the matrix classically considered in nonparametric regression methods\footnote{Indeed, usually the matrix to invert is the cross-moment $\left({\mathbb E}[\varphi_{k_1}(X_{1,1})\varphi_{k_2}(X_{1,1})] \right)_{1\leq k_1, k_2\leq m}$, also denoted by $\Psi_m$ in several papers.}. This covariance matrix can not be invertible if the basis contains a constant function; indeed then, the basis functions are compactly supported,  the support $A$ is chosen such that all $X_{i,j}$ are in $A$ a.s., and thus ${\rm cov}({\mathbf 1}_A(X_{1,1}),\varphi_k(X_{1,1}))\simeq 0$. This enhances a level problem in the estimation procedure: if the constant function is $\varphi_1$ the first function of the basis on $A=[{\tt a}_1, {\tt a}_2]$, i.e. 
$\varphi_1=(1/\sqrt{{\tt a}_2-{\tt a}_1}){\mathbf 1}_{[{\tt a}_1, {\tt a}_2]}$, then the first term of the development of $b$ in the basis is  $$\langle b, \varphi_1\rangle \varphi_1(\,.\,)=  \left( \frac 1{{\tt a}_2-{\tt a}_1}\int_{{\tt a}_1}^{{\tt a}_2}  b(x)dx \right)\,\,  {\mathbf 1}_{[{\tt a}_1, {\tt a}_2]}(\,.\,),$$ and it can not be estimated. This is the reason why in this work we consider bases of $\mathbb{L}^2(A)$ without any constant functions, which makes easy comparisons possible between our setting and the standard context, that is, $\rho=0$. 

Another way of looking at the problem is to consider $Y_{i,j}=\mu + u_{i,j}$; then  we can estimate $\mu$ with 
$$\widehat \mu= \frac 1{nJ}\sum_{i=1}^n\sum_{j=1}^JY_{i,j}.$$ We get an unbiased and  consistent estimator of $\mu$, but we note that the variance of $\widehat \mu$ is of order $1/J$ (equal to $\rho/J+ (1-\rho)/(nJ)$). The level correction is mandatory in the estimation procedure, and we make the transformation $Y_{i,j}-\widehat\mu$ for the estimation of $b^\star=b-\mu$, and then compute errors for $\widehat{b^\star} +\widehat \mu$, leading to a consistency which could not be reached for $J=1$ in \cite{comte2025non}. This is however not completely satisfactory since, as explained above, the correct level correction is rather the mean integral of $b$ on the domain. Further discussions are given in Section \ref{Simulations}.

\section{Risk bounds in empirical and theoretical norms}\label{sec::risks}
The aim is to bound the risk of the estimator proposed in  Section \ref{sec::estimator-def} first in integrated empirical norm ${\mathbb E}[\Vert \,.\, \Vert_{n,J,R}^2]$ and, subsequently, in integrated theoretical norm ${\mathbb E}[\Vert \,.\, \Vert_{R}^2]$. 

\subsection{First bound in empirical norm}

We start by considering the risk of the estimator proposed as expectation of the empirical norm $\Vert \cdot\Vert_{n,J,R}^2$. 

\begin{prop}\label{prop::riskbound-empirical}
    Assume that $\wPsi$ is invertible. The risk bound on $\widehat{b}_m$ expressed as the expectation of the empirical norm $\Vert \cdot\Vert_{n,J,R}$ satisfies
    \begin{align*}
    \E [ \Vert b - \widehat{b}_m \Vert_{n,J,R}^2 ] 
    \leq \inf_{t\in S_m} \Vert b - t \Vert_{R}^2 + \sigma^2\frac{m}{nJ}.
\end{align*}
\end{prop}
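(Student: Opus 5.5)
The proof rests on Remark \ref{rem::projection}. Write $\mathbf b := b(\Xvec)$ and use the scalar product $\langle \mathbf v,\mathbf w\rangle_{\Rbig} := \frac{1}{nJ}\mathbf v^\top \Rbig \mathbf w$ on $\Rz^{nJ}$, so that $\|t\|_{n,J,R}^2 = \langle t(\Xvec),t(\Xvec)\rangle_{\Rbig}$. Since $\widehat b_m(\Xvec) = P\Yvec = P\mathbf b + P\uvec$, and $P$ is the $\Rbig$-orthogonal projection onto $\{t(\Xvec): t\in S_m\}$, Pythagoras gives
\begin{align*}
\Vert b-\widehat b_m\Vert_{n,J,R}^2 = \langle (I-P)\mathbf b,(I-P)\mathbf b\rangle_{\Rbig} + \langle P\uvec,P\uvec\rangle_{\Rbig}.
\end{align*}
The cross term vanishes because $(I-P)\mathbf b$ and $P\uvec$ are $\Rbig$-orthogonal. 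This holds pathwise, without any expectation.

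For the bias term, note that $P\mathbf b$ is the best $\Rbig$-approximation of $\mathbf b$ in the range of $P$. Hence, for every fixed deterministic $t\in S_m$,
\begin{align*}
\langle (I-P)\mathbf b,(I-P)\mathbf b\rangle_{\Rbig} \le \Vert b-t\Vert_{n,J,R}^2 .
\end{align*}
Taking expectations and using $\E[\Vert b-t\Vert_{n,J,R}^2] = \Vert b-t\Vert_R^2$ yields the bias part. Since $t$ is deterministic, this definition extends from $S_m$ to $b-t$; this needs only that $b$ is square integrable against $f$. Taking the infimum over $t$ afterwards is legitimate, because the bound holds for each $t$ before the expectation is taken.

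For the variance term, condition on $\Xvec$. The noise $\uvec$ is independent of $\Xvec$, centered, and has covariance $\sigma^2\mathbf R$. Then
\begin{align*}
\E\big[\langle P\uvec,P\uvec\rangle_{\Rbig}\,\big|\,\Xvec\big]
= \frac{1}{nJ}\E\big[\uvec^\top P^\top\Rbig P\uvec\,\big|\,\Xvec\big]
= \frac{\sigma^2}{nJ}\operatorname{tr}\big(P^\top \Rbig P\,\mathbf R\big).
\end{align*}
Next, $P^\top\Rbig P = \Rbig P P = \Rbig P$, using $P^\top\Rbig=\Rbig P$ and $P^2=P$. Therefore the trace equals $\operatorname{tr}(\Rbig P \mathbf R) = \operatorname{tr}(P\mathbf R\Rbig) = \operatorname{tr}(P)$. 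Because $P$ is idempotent, $\operatorname{tr}(P)$ is its rank, which is at most $m$. Alternatively, $\operatorname{tr}(P) = \operatorname{tr}\big((\wPsi)^{-1}\frac{1}{nJ}\wPhi^\top\Rbig\wPhi\big) = m$ by cyclicity. Integrating over $\Xvec$ gives the bound $\sigma^2 m/(nJ)$.

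The only delicate point is the order of operations. The minimum over $t$ must be taken after bounding pathwise by a fixed deterministic $t$, not by a data-dependent one. Invertibility of $\wPsi$ is assumed, so $P$ is well defined almost surely, and the conditional trace computation is routine.
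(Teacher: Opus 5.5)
Your proposal is correct and follows essentially the same route as the paper: a pathwise Pythagorean split into the $\Rbig$-projection residual of $b(\Xvec)$ and the projected noise, the bias handled by bounding with a fixed $t\in S_m$ before taking expectations (equivalently $\E[\inf_t\cdot]\le\inf_t\E[\cdot]$), and the variance computed exactly as $\sigma^2\operatorname{tr}(P)/(nJ)=\sigma^2 m/(nJ)$ via the trace identity. The only cosmetic difference is that you work directly with the projector $P$ of Remark~\ref{rem::projection}, whereas the paper expresses $\widehat b_m-b_m$ through the coefficient vectors $\widehat{\mathbf a}_m-\mathbf a_m$.
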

We refer to the appendix for the proof. We observe that the risk decomposition is a generalization of the one obtained in Proposition 1 of \cite{comte2025non}, where the variance term $m/n$ for $J=1$ becomes $m/(nJ)$ with only the invertibility constraint on  $\wPsi$.

Since $\mathbf R$ is also a variance matrix with the same eigenvalues as $R$ and its inverse $\Rbig$ is also diagonalizable with the same eigenvalues as $R^{-1}$ (with multiplicities $J(n-1)$ and $J$), we can extend Lemma 1 in \cite{comte2025non} as follows.
\begin{lem}\label{lem::norm-properties}
    The following properties hold:
    \begin{enumerate}
        \item $\Vert t \Vert_{n,J,R}^2\geq 0$ with 
        \begin{align*}
            \frac{1}{1 +(n-1)\rho} \Vert t\Vert^2_{n,J} \leq \Vert t \Vert_{n,J,R}^2 \leq \frac{1}{1-\rho} \Vert t\Vert^2_{n,J}
        \end{align*}
        where $\Vert t\Vert_{n,J}^2 := \frac{1}{nJ} t(\Xvec)^\top t(\Xvec)$.
        \item $\Vert t \Vert_{R}^2\geq 0$ with 
        \begin{align*}
            \frac{1}{1 +(n-1)\rho} \E[t^2(X_{1,1})] \leq \Vert t \Vert_{R}^2 \leq \frac{1}{1-\rho}  \E[t^2(X_{1,1})].
        \end{align*}
    \end{enumerate}
\end{lem}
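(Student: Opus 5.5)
The proof rests on the spectral description of $\Rbig$ recalled in Section \ref{sec::general-notation}. First I will show that $\Rbig$ is symmetric positive definite. By \eqref{InvRsimple}, $R^{-1}$ is a combination of ${\rm Id}_n$ and the rank-one projector $\frac1n{\mathbf 1}_n{\mathbf 1}_n^\top$. So it is diagonal in an orthonormal basis of $\Rz^n$ made of ${\mathbf 1}_n/\sqrt n$ and any orthonormal basis of ${\mathbf 1}_n^\perp$. The eigenvalue on ${\mathbf 1}_n$ is $c_n=1/(1+(n-1)\rho)$, and the eigenvalue on ${\mathbf 1}_n^\perp$ is $1/(1-\rho)$. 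Since $\Rbig$ is block diagonal with $J$ copies of $R^{-1}$, it is diagonal in the orthonormal basis of $\Rz^{nJ}$ built blockwise, with the same two eigenvalues (multiplicities $J$ and $J(n-1)$). For $\rho\in(0,1)$ we have $0<c_n\le 1/(1-\rho)$, because $1+(n-1)\rho\ge 1-\rho$.

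\emph{Point 1.} Apply the Rayleigh quotient bounds for a symmetric matrix: for any $v\in\Rz^{nJ}$,
\begin{align*}
c_n\, v^\top v \le v^\top \Rbig v \le \frac{1}{1-\rho}\, v^\top v .
\end{align*}
Take $v=t(\Xvec)$ and divide by $nJ$. The middle term is $\Vert t\Vert_{n,J,R}^2$ by its definition, and $\frac1{nJ}v^\top v=\Vert t\Vert_{n,J}^2$. This gives the two-sided inequality, and nonnegativity follows since $c_n>0$. The argument is pathwise and holds for every realization of $\Xvec$; in particular it does not require $t\in S_m$ or the invertibility of $\wPsi$.

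\emph{Point 2.} Take expectations in the inequality of Point 1, which is legitimate by monotonicity of expectation since the bounds hold almost surely. By definition, $\Vert t\Vert_R^2=\E[\Vert t\Vert_{n,J,R}^2]$. On the other side,
\begin{align*}
\E[\Vert t\Vert_{n,J}^2]=\frac1{nJ}\sum_{i,j}\E[t^2(X_{i,j})]=\E[t^2(X_{1,1})],
\end{align*}
because the $X_{i,j}$ are identically distributed. Nonnegativity of $\Vert t\Vert_R^2$ again follows from the lower bound.

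The only step needing any care is the eigen-decomposition of $\Rbig$. It is already stated in Section \ref{sec::general-notation} and can be checked directly from \eqref{InvRsimple}, so there is no genuine obstacle; the rest is the standard Rayleigh quotient argument used in Lemma 1 of \cite{comte2025non}, transported blockwise to dimension $nJ$.
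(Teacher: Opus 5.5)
Your proof is correct and takes the same approach as the paper. The paper justifies the lemma only by noting that $\Rbig$ is block diagonal with the same two eigenvalues $1/(1-\rho)$ and $c_n$ as $R^{-1}$ (multiplicities $J(n-1)$ and $J$), so that the Rayleigh-quotient argument of Lemma 1 in \cite{comte2025non} carries over. You spell that argument out and then take expectations for Point 2.
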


If $f$ is bounded and $b_m:=\sum_{j=1}^m \langle b, \varphi_j\rangle \varphi_j$ denotes the orthogonal projection of $b$ on $S_m$, then, by Lemma \ref{lem::norm-properties}, 
\begin{align*}
    \inf_{t\in S_m} \Vert b - t \Vert_{R}^2 \leq \frac{1}{1-\rho}  \inf_{t\in S_m} \Vert b - t \Vert_{R}^2 \leq \frac{\Vert f\Vert_\infty}{1-\rho} \Vert b_m - b\Vert^2,
\end{align*} 
where $\Vert \cdot \Vert$ denotes the integral norm in $\mathbb{L}^2(\mathbb{R})$. Clearly $\Vert b_m - b\Vert^2$ tends to zero when $m$ increases. In contrast, the variance term $m/nJ$ increases with $m$. Thus, Proposition \ref{prop::riskbound-empirical} exhibits a standard bias/variance decomposition requiring a compromise, and is associated with a sample size $nJ$ without loss despite the dependency introduced by the common noise. Clearly, under standard regularity assumptions related to a regularity order for $b$, a nonparametric rate of order of a negative power of $nJ$ can be expected: this explains why a level correction with risk of order $1/J$ is not completely satisfactory, see Section \ref{Identif}.

\subsection{Risk bound in theoretical norm}

To obtain a result with respect to the risk in theoretical norm, we have to compare precisely the empirical norm to its expectation. We give in Lemma \ref{Lemm2} hereafter the decomposition of the empirical norm,  involving several processes, whose deviation have to be controlled in the following.  
\begin{lem}\label{Lemm2}
    For the empirical norm the  following decomposition holds.
    \begin{align*}
        \Vert t \Vert_{n,J,R}^2 = \alpha_n Z_{n,J}(t) + c_n(\E[t(X_{1,1})])^2 + 2 c_n \E[t(X_{1,1})] V_{n,J}(t) + (n-1)\beta_n U_{n,J}(t)
        \end{align*}
        with
        \begin{align}
            Z_{n,J}(t)&:= \frac 1J\sum_{j=1}^J Z_{n,j}(t), \quad Z_{n,j}(t)=\frac{1}{n}\sum_{i=1}^n (t(X_{i,j})-\E[t(X_{1,1})])^2\label{eq::zJ}\\
            V_{n,J}(t) &:= \frac{1}{J}\sum_{j=1}^JV_{n,j}(t), \quad V_{n,j}(t)=\frac 1n\sum_{i=1}^n (t(X_{i,j})-\E[t(X_{1,1})])\nonumber\\
            U_{n,J}(t) &:= \frac{1}{J} \sum_{j=1}^J U_{n,j}(t), \quad U_{n,j}(t) = \frac 1{n(n-1)} \sum_{1\leq i\not= k \leq n} (t(X_{i,j})-\E[t(X_{1,1})])(t(X_{k,j})-\E[t(X_{1,1})]).\label{eq::uJ}
        \end{align}
 \end{lem}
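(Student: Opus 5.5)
The identity is purely algebraic: it holds pathwise for each block $j$, and averaging over $j=1,\dots,J$ then gives the statement. Fix $t$ and $j$, and write $\mu:=\E[t(X_{1,1})]$, $\mathbf x:=t({\mathbf X}_{\bullet,j})\in\Rz^n$ and $\mathbf y:=\mathbf x-\mu{\mathbf 1}_n$, so that $y_i=t(X_{i,j})-\mu$. Since $\|t\|_{n,J,R}^2=\frac1J\sum_{j}\|t\|_{n,R,j}^2$ and each of $Z_{n,J},V_{n,J},U_{n,J}$ is the average over $j$ of $Z_{n,j},V_{n,j},U_{n,j}$, it suffices to show that for each $j$
\begin{align*}
\|t\|_{n,R,j}^2=\alpha_n Z_{n,j}(t)+c_n\mu^2+2c_n\mu V_{n,j}(t)+(n-1)\beta_n U_{n,j}(t).
\end{align*}

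\emph{Step 1: expand the quadratic form.} Substitute $\mathbf x=\mathbf y+\mu{\mathbf 1}_n$ into $\frac1n\mathbf x^\top R^{-1}\mathbf x$. By symmetry of $R^{-1}$,
\begin{align*}
\mathbf x^\top R^{-1}\mathbf x=\mathbf y^\top R^{-1}\mathbf y+2\mu\,{\mathbf 1}_n^\top R^{-1}\mathbf y+\mu^2\,{\mathbf 1}_n^\top R^{-1}{\mathbf 1}_n .
\end{align*}

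\emph{Step 2: use the eigenvector ${\mathbf 1}_n$.} From \eqref{InvRsimple}, ${\mathbf 1}_n$ is an eigenvector of $R^{-1}$ with eigenvalue $c_n$. Indeed,
\begin{align*}
\frac1{1-\rho}\Bigl(1-\frac{n\rho}{1+(n-1)\rho}\Bigr)=\frac1{1+(n-1)\rho}=c_n .
\end{align*}
Hence ${\mathbf 1}_n^\top R^{-1}\mathbf y=c_n\sum_i y_i=n c_n V_{n,j}(t)$ and ${\mathbf 1}_n^\top R^{-1}{\mathbf 1}_n=nc_n$. After dividing by $n$, these give the middle terms $2c_n\mu V_{n,j}(t)+c_n\mu^2$.

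\emph{Step 3: the centered quadratic form.} Use the entrywise form of $R^{-1}$, namely $\alpha_n$ on the diagonal and $\beta_n$ off the diagonal. This gives
\begin{align*}
\frac1n\mathbf y^\top R^{-1}\mathbf y=\alpha_n\frac1n\sum_i y_i^2+\beta_n\frac1n\sum_{i\neq k}y_iy_k=\alpha_n Z_{n,j}(t)+(n-1)\beta_n U_{n,j}(t),
\end{align*}
where the last equality uses the normalisation $\frac1{n(n-1)}$ in the definition \eqref{eq::uJ} of $U_{n,j}$.

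Collecting Steps 1 to 3 and averaging over $j$ yields the lemma. There is no genuine obstacle. The only point needing care is Step 2, where the cross term collapses to a multiple of $V_{n,j}$ because ${\mathbf 1}_n$ is the eigenvector of $R^{-1}$ with eigenvalue $c_n$, rather than producing a further mixture of $\alpha_n$ and $\beta_n$.
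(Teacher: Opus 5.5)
Your proof is correct and is essentially the paper's approach. The paper states this lemma without a separate proof, as the blockwise extension of the $J=1$ decomposition of \cite{comte2025non}, and your computation supplies exactly that argument: you expand $\frac1n t(\mathbf X_{\bullet,j})^\top R^{-1}t(\mathbf X_{\bullet,j})$ around $\mu{\mathbf 1}_n$, use $R^{-1}{\mathbf 1}_n=c_n{\mathbf 1}_n$ (equivalently $\alpha_n+(n-1)\beta_n=c_n$), and average over $j$.
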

 The main processes of the decomposition of $\|t\|_{n,J,R}^2$ are:  $Z_{n,J}(t)$, which is studied thanks to a matrix Chernov deviation inequality (Lemma \ref{lem::tropp-chernoff} from \cite{tropp2012user} recalled in the Appendix); and $U_{n,J}(t)$ which is a degenerated $U$-statistics of second order and is treated with the corresponding deviation inequality stated in  \cite{GineNickl2016Book} (Theorem 3.4.8 p. 183; see also Theorem 3.4 in \cite{HoudreReynaud2003Ustat}). 
As $V_{n,J}(t)$ and $U_{n,J}(t)$ are centered, it straightforwardly holds that
$$\|t\|_R^2= {\mathbb E}[\|t\|_{n,J,R}]= \alpha_n {\rm Var}(t(X_{1,1})) + c_n(\E[t(X_{1,1})])^2.$$
As $\alpha_n\rightarrow_{n\rightarrow +\infty} 1/(1-\rho)$ and $c_n=1/[1+(n-1)\rho]\rightarrow_{n\rightarrow +\infty} 0$, we find, in accordance with Lemma \ref{linkCov}, that
$$  \lim_{n\rightarrow +\infty} \|t\|_R^2 = \frac 1{1-\rho}  {\rm Var}(t(X_{1,1}).$$

We note that the following equality holds:
\begin{align*}
    \Vert t \Vert^2_{n,J,R} = \frac{1}{1-\rho} \frac{1}{nJ}\sum_{j=1}^J \sum_{i=1}^n \left[t(X_{i,j})  - \frac{1}{n}\sum_{k=1}^n t(X_{k,j}) \right]^2 + c_n \frac{1}{J}\sum_{j=1}^J \left(\frac{1}{n}\sum_{i=1}^n t(X_{i,j}) \right)^2.
\end{align*}
Therefore, $ \Vert t \Vert^2_{n,J,R} =0$ implies $t(X_{i,j})=0$ a.s. for all $i,j$. For some orthonormal bases, such as the Hermite or Tchebychev basis, $ \Vert t \Vert^2_{n,J,R} =0$ for $t\in S_m$ even implies that $t=0$ for $nJ$ larger than $m$, see Section \ref{Simulations} for details on the bases. \\

Next, the aim is to handle the integrated risk with respect to the squared norm $\Vert \cdot \Vert_R^2$. As discussed in \cite{comte2025non}, this task is far from simple and the extension to the more general setting of this , i.e.  $J>1$, also requires an adaptation of most steps. To bound the risk in theoretical norm, the following assumption is necessary.
\begin{ass}\label{ass::theoretical-bound}
\begin{enumerate}
    \item[(i)] Assume that for
    \begin{align*}
        L(m) := \sup_{x\in\Rz} \sum_{k=0}^m \varphi_k^2 (x) < +\infty
    \end{align*}
     it holds $L(m)=m^s$ and the functions $\varphi_j$ are such that $\sup_{x\in A}|\varphi_j(x)|\leq \theta$, for some $\theta, s>0$.
    \item[(ii)] For $m, n, J\in\Nz$ and some constant $c^\star>0$ assume that the following stability condition holds:
\begin{equation}\label{Stability}
    (m\vee L(m)) (\Vert \Psi_m^{-1} \Vert_{\operatorname{op}} \vee 1) \leq c^\star \frac{n\sqrt{J}}{\log(n)\sqrt{1\vee\log(J)}},
\end{equation}
using the notation $x\vee y = \min(x,y)$ for any $x,y\in\mathbb{R}$.
\end{enumerate}
\end{ass}

\begin{rem}
    The results of this work also hold up to constants if there is an additional constant in the assumption on $L$, i.e. $L(m)=cm^s$ for some $c>0$. Note that combining the stability condition \eqref{Stability} and Assumption \ref{ass::theoretical-bound} (i) results in $m^{s\vee 1} \leq n\sqrt{J}$.
    In addition, we point out that in contrast to the model without repeated measurements, discussed in \cite{comte2025non}, the stability condition is posed on the covariance $\Psi_m$  instead of $\ePsi$. The next result states that the stability condition for $\ePsi$ follows.
 \end{rem}

\begin{prop}\label{prop::new-prop3}
    Under Assumption \ref{ass::theoretical-bound}, we have that for $n\geq 3 $
   \begin{equation}\label{StabilityR}
        L(m) (\Vert (\ePsi)^{-1} \Vert_{\operatorname{op}} \vee 1) \leq  c^{\star \star} \frac{n\sqrt{J}}{\log(n)\sqrt{1\vee\log(J)}}, \quad c^{\star \star}=2(1-\rho) c^\star.
    \end{equation}
\end{prop}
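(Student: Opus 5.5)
We compare the quadratic forms of $\ePsi$ and $\Psi_m$ directly. By the identity stated after Lemma \ref{Lemm2}, for $t=\sum_k a_k\varphi_k\in S_m$,
\begin{align*}
\mathbf a^\top \ePsi\mathbf a=\Vert t\Vert_R^2=\alpha_n \operatorname{Var}(t(X_{1,1}))+c_n(\E[t(X_{1,1})])^2\geq \alpha_n\,\mathbf a^\top\Psi_m\mathbf a ,
\end{align*}
since $c_n>0$ and $\operatorname{Var}(t(X_{1,1}))=\mathbf a^\top\Psi_m\mathbf a$.

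Assumption \ref{ass::theoretical-bound}(ii) implicitly requires $\Psi_m$ to be invertible, so it is positive definite. The display then gives $\ePsi\succeq\alpha_n\Psi_m\succ 0$ in the Loewner order. Hence $\ePsi$ is invertible and $(\ePsi)^{-1}\preceq \alpha_n^{-1}\Psi_m^{-1}$, because matrix inversion is order-reversing on positive definite matrices. Taking operator norms of these symmetric positive matrices yields
\begin{align*}
\Vert(\ePsi)^{-1}\Vert_{\operatorname{op}}\leq \alpha_n^{-1}\Vert\Psi_m^{-1}\Vert_{\operatorname{op}}.
\end{align*}
Equivalently, $\lambda_{\min}(\ePsi)\geq\alpha_n\lambda_{\min}(\Psi_m)$, which avoids citing order reversal.

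The remaining step is the elementary bound on $\alpha_n^{-1}$:
\begin{align*}
\alpha_n^{-1}=\frac{(1-\rho)(1+(n-1)\rho)}{1+(n-2)\rho}=(1-\rho)\Bigl(1+\frac{\rho}{1+(n-2)\rho}\Bigr)\leq 2(1-\rho).
\end{align*}
This actually holds for all $n\geq2$. The hypothesis $n\geq3$ is harmless and presumably ensures $\log(n)>1$ in the stability condition.

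We now combine these pieces. Since $2(1-\rho)$ may be smaller than $1$, we write
\begin{align*}
\Vert(\ePsi)^{-1}\Vert_{\operatorname{op}}\vee1\leq 2(1-\rho)\Vert\Psi_m^{-1}\Vert_{\operatorname{op}}\vee 1 .
\end{align*}
To absorb the ``$\vee1$'' into the constant $2(1-\rho)c^\star$, we read $\vee$ as the maximum, which is clearly intended despite the stated definition. The case $\Vert(\ePsi)^{-1}\Vert_{\operatorname{op}}\leq 1$ then needs a bound on $L(m)$ alone, and the case $\Vert(\ePsi)^{-1}\Vert_{\operatorname{op}}>1$ follows from the display above.

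This constant bookkeeping is the main obstacle. Obtaining exactly $c^{\star\star}=2(1-\rho)c^\star$ in the first case needs $L(m)\leq 2(1-\rho)\,L(m)\,(\Vert\Psi_m^{-1}\Vert_{\operatorname{op}}\vee1)$. I would try to get this from a lower bound $\Vert\Psi_m^{-1}\Vert_{\operatorname{op}}\geq 1/\operatorname{Var}(\varphi_1(X_{1,1}))\geq1/\E[\varphi_1^2(X_{1,1})]$ together with a bound on $\Vert f\Vert_\infty$. Failing that, I would keep the ``$\vee1$'' on the right side as $\max(2(1-\rho),1)c^\star$. Finally, multiplying by $L(m)\leq m\vee L(m)$ and invoking \eqref{Stability} gives \eqref{StabilityR}.
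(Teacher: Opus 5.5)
Your core argument is correct, and it reaches the key inequality by a more economical route than the paper. The paper imports the matrix identity $\alpha_n^{-1}\ePsi=\Psi_m+d_n u_mu_m^\top$ from the $J=1$ analysis and inverts it explicitly with Sherman--Morrison. It then checks that the rank-one perturbation of the identity inside $\alpha_n(\ePsi)^{-1}$ has operator norm $1$. You read the same decomposition off $\Vert t\Vert_R^2=\alpha_n\operatorname{Var}(t(X_{1,1}))+c_n(\E[t(X_{1,1})])^2$ and use only $c_nu_mu_m^\top\succeq 0$, i.e. $\lambda_{\min}(\ePsi)\geq\alpha_n\lambda_{\min}(\Psi_m)$. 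This gives invertibility of $\ePsi$ and $\Vert(\ePsi)^{-1}\Vert_{\rm op}\le\alpha_n^{-1}\Vert\Psi_m^{-1}\Vert_{\rm op}$ with no inverse formula. Your bound on $\alpha_n^{-1}$ is right, and in fact $\alpha_n^{-1}\le(1-\rho)(1+\rho)=1-\rho^2<1$ for every $n\ge 2$. Reading $\vee$ as the maximum is also right.

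The ``$\vee 1$'' defect you found is real, but it lies in the statement and in the paper's proof, not in your argument. The paper writes $L(m)(\Vert(\ePsi)^{-1}\Vert_{\rm op}\vee 1)\le\alpha_n^{-1}L(m)(\Vert\Psi_m^{-1}\Vert_{\rm op}\vee 1)$. The left factor is at least $1$ and $\alpha_n>1$, so this fails as soon as $\Vert\Psi_m^{-1}\Vert_{\rm op}<\alpha_n$.

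Your planned repair via a lower bound on $\Vert\Psi_m^{-1}\Vert_{\rm op}$ and $\Vert f\Vert_\infty$ cannot succeed. No bound on $f$ is assumed, and more to the point the claimed constant is false in general for $\rho>1/2$. Take $m=1$, $A=[0,1]$, $\varphi_1=\mathbf 1_{[0,1/2)}-\mathbf 1_{[1/2,1]}$ and $X_{1,1}$ uniform on $[0,1]$. Then $L(1)=1$, $\Psi_1=1$ and $\ePsi=\alpha_n>1$, so both ``$\vee 1$'' factors equal $1$. If $c^\star$ makes \eqref{Stability} an equality, the conclusion reads $1\le 2(1-\rho)$.

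So drop that attempt and keep your fallback. Since $\alpha_n^{-1}<1$, it simplifies to $\Vert(\ePsi)^{-1}\Vert_{\rm op}\vee 1\le\Vert\Psi_m^{-1}\Vert_{\rm op}\vee 1$. Hence \eqref{StabilityR} holds with $c^{\star\star}=c^\star$, which implies the stated version when $\rho\le 1/2$. For $\rho>1/2$, take $c^{\star\star}=c^\star$ (or your $\max(2(1-\rho),1)c^\star$) in $\Lambda_m$, $\mathcal M_{n,J}$ and $\mathcal M_{n,J}^+$; this changes only numerical constants downstream.
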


We refer to section \ref{sec:proofprop2} for the proof. To control the theoretical norm of the estimator, we need to impose the stability on the estimator. Thus, the corresponding empirical version of the stability condition \eqref{StabilityR} is given by
\begin{align*}
    \Lambda_m := \left\{ L(m) (\Vert (\wPsi)^{-1} \Vert_{\text{op}} \vee 1) \leq 4 c^{**} \frac{n \sqrt{J}}{\log(n)\sqrt{1\vee\log(J)}}\right\}.
\end{align*}
Let us define now the restricted estimator $$\widetilde{b}_m := \mathbf{1}_{\Lambda_m}\widehat{b}_m.$$

Further, we define the following constants. For $p\geq 1$ and $r\geq 2$ and $c_0 = 18.6$  define
\begin{align}
    {\mathfrak c}_r := 6r^2 c_0 (1+\theta^2) 8\sqrt{6} \quad \text{ and }\quad{\mathbf c}_p^\star := \frac{1-3\log(3/2)}{8(p+1)}.\label{eq::constants-def}
\end{align}
In the results of this work, these constants restrict the constant $c^\star$ of stability condition \eqref{Stability} for different choices of $p$ and $r$, see Remark \ref{rem::constants} and Lemma \ref{TheResult} below for more details.  Then, the following result holds for the risk bound in term of the expectation of the theoretical norm.

\begin{thm}\label{thm::theoretical-risk}
Let Assumption \ref{ass::theoretical-bound} be satisfied. For ${\mathfrak c}_r$ and ${\mathbf c}_p^\star$ defined in \eqref{eq::constants-def} with $p=4$ and $r=4 + 2\min(1/s,1)$, assume that for the constant $c^\star$ of stability condition \eqref{Stability} holds
 \begin{equation}\label{cstar}
     c^\star \leq \frac 1{8{\mathfrak c}_r} \wedge \frac{{\mathbf c}_p^\star}3.
 \end{equation} 
 we have for  $n\geq 192/\rho$ , $J \leq n(n-1)/\log^2(n)$ and $m\leq nJ$ that 
    \begin{align*}
        \E [ \Vert b - \widetilde{b}_m \Vert_{R}^2 ] 
    \leq C_1 \left(\inf_{t\in S_m} \Vert b - t \Vert_{R}^2 + \sigma^2\frac{m}{nJ}\right) +  \frac {C_2}{nJ} 
    \end{align*}
    for $C_1$ a numerical constant ($C_1=34$ suits) and $C_2$ is a constant depending on ${\mathfrak c}_r $, ${\mathbf c}_p^\star$, $\rho$, $\|b\|_R^2$, ${\mathbb E}[Y_{1,1}^4]$, but not on $m, n, J$.
\end{thm}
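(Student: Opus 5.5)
We follow the standard scheme for theoretical-norm bounds of least-squares projection estimators (Cohen–Davenport–Leviatan, as adapted in \cite{comte2025non}). Write $b_m^R$ for the $\Vert\cdot\Vert_R$-orthogonal projection of $b$ on $S_m$. We introduce the event
\[
\Omega_m:=\Big\{\Big|\tfrac{\Vert t\Vert_{n,J,R}^2}{\Vert t\Vert_R^2}-1\Big|\le \tfrac12,\ \forall t\in S_m\setminus\{0\}\Big\}.
\]
The risk then splits as
\[
\E[\Vert b-\widetilde b_m\Vert_R^2]=\E[\Vert b-\widehat b_m\Vert_R^2\mathbf 1_{\Lambda_m\cap\Omega_m}]+\E[\Vert b-\widehat b_m\Vert_R^2\mathbf 1_{\Lambda_m\cap\Omega_m^c}]+\Vert b\Vert_R^2\,\mathbb P(\Lambda_m^c).
\]

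\emph{Main term.} On $\Omega_m$ we use $\Vert b-\widehat b_m\Vert_R^2\le 2\Vert b-b_m^R\Vert_R^2+2\Vert b_m^R-\widehat b_m\Vert_R^2$. Since $b_m^R-\widehat b_m\in S_m$, the second norm is bounded by $2\Vert b_m^R-\widehat b_m\Vert_{n,J,R}^2$. We then split the empirical norm via the triangle inequality and apply Proposition \ref{prop::riskbound-empirical}. This requires $\E[\Vert b-b_m^R\Vert_{n,J,R}^2]=\Vert b-b_m^R\Vert_R^2$, which holds by the definition of $\Vert\cdot\Vert_R$. The result is a bound $C_1(\inf_t\Vert b-t\Vert_R^2+\sigma^2 m/(nJ))$ with a numerical $C_1$.

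\emph{Remainder terms.} For $\mathbb P(\Lambda_m^c)$, we note that on $\Omega_m$ we have $\Vert(\wPsi)^{-1}\Vert_{\rm op}\le 2\Vert(\ePsi)^{-1}\Vert_{\rm op}$. Combined with Proposition \ref{prop::new-prop3}, this gives $\Omega_m\subset\Lambda_m$, so $\mathbb P(\Lambda_m^c)\le\mathbb P(\Omega_m^c)$. On $\Lambda_m\cap\Omega_m^c$ we bound $\Vert\widehat b_m\Vert_R^2\le \frac1{1-\rho}\E t^2\lesssim \Vert\widehat{\mathbf a}_m\Vert^2$, using $\Vert\varphi_j\Vert_\infty\le\theta$, $L(m)$, and $\Vert(\wPsi)^{-1}\Vert_{\rm op}\lesssim n\sqrt J$ on $\Lambda_m$. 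We then use $\widehat{\mathbf a}_m=(\wPsi)^{-1}\widehat Z_m$ and $\Vert\widehat Z_m\Vert^2\le \Vert(\wPsi)\Vert_{\rm op}\,\Vert\Yvec\Vert^2_{\Rbig}/(nJ)$. This yields $\Vert\widehat b_m\Vert_R^2\lesssim \mathrm{poly}(n,J)\cdot\frac1{nJ}\sum Y_{i,j}^2$, and Cauchy–Schwarz gives a contribution $\lesssim \mathrm{poly}(n,J)\,\E[Y_{1,1}^4]^{1/2}\,\mathbb P(\Omega_m^c)^{1/2}$. Hence it suffices that $\mathbb P(\Omega_m^c)\le C(nJ)^{-p'}$ for a large enough power $p'$. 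The choices $p=4$ and $r=4+2\min(1/s,1)$, together with the restrictions $m\le nJ$ and $J\le n(n-1)/\log^2 n$, are exactly what convert the polynomial factors into $C_2/(nJ)$.

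\emph{Main obstacle: bounding $\mathbb P(\Omega_m^c)$ (Lemma \ref{TheResult}).} We reduce to the unit sphere of $\ePsi$, i.e. to bounding $\Vert(\ePsi)^{-1/2}\wPsi(\ePsi)^{-1/2}-{\rm Id}\Vert_{\rm op}>1/2$, and use the decomposition of Lemma \ref{Lemm2}. The $Z_{n,J}$ part is a sum of $nJ$ independent rank-one matrices. The matrix Chernoff inequality (Lemma \ref{lem::tropp-chernoff}) bounds it with exponent of order $nJ/(L(m)\Vert\Psi_m^{-1}\Vert_{\rm op})$, which is harmless under \eqref{Stability}. The $V_{n,J}$ term carries the factor $c_n\sim 1/(n\rho)$ and is easily controlled. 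The delicate term is $(n-1)\beta_n U_{n,J}$. Since $J^{-1}\sum_j U_{n,j}$ is a degenerate $U$-statistic of order two in the $n$ variables per block, averaged over $J$ independent blocks, it is equivalent to a degenerate $U$-statistic whose kernel is supported on same-$j$ pairs. We would apply Giné–Nickl Theorem 3.4.8 (after an $\varepsilon$-net or chaining over the $m$-dimensional unit ball, with the kernel bounded via $L(m)$). The resulting Bernstein-type bound has Gaussian term $\sqrt{J}\,n$ rather than $nJ$. This is exactly the source of the $n\sqrt J/(\log n\sqrt{\log J})$ scaling in \eqref{Stability}, and the constants ${\mathfrak c}_r$, ${\mathbf c}_p^\star$ and the requirement \eqref{cstar} on $c^\star$ are tuned so that each deviation probability is at most $(nJ)^{-p}$. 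We expect the bookkeeping of the union over the net and of the four terms of the $U$-statistic inequality (with $n\ge192/\rho$ absorbing the $c_n$ terms) to be the main technical burden.
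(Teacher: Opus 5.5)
Your architecture is the paper's: split on $\Omega_m$, Proposition~\ref{prop::riskbound-empirical} for the main term, $\Omega_m\subset\Lambda_m$, Cauchy--Schwarz with $\E[Y_{1,1}^4]$ on $\Lambda_m\cap\Omega_m^c$, and Lemma~\ref{TheResult} via Lemma~\ref{Lemm2}, matrix Chernoff and Gin\'e--Nickl. Using Proposition~\ref{prop::riskbound-empirical} under the indicator is legitimate because $\Lambda_m$ and $\Omega_m$ are $\Xvec$-measurable.

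The step that would not go through as planned is the $U_{n,J}$ part. An $\varepsilon$-net on the unit sphere of $\Rz^m$ costs a factor $e^{cm}$ in the union bound, so you must take $u\gtrsim m+r\log(nJ)$ in Gin\'e--Nickl. The four terms then carry extra powers of $m$; for example, the $A$-term becomes of order $L(m)m^2/n^2$. As a result you do not land on the threshold ${\mathfrak c}_r(m\vee L(m))\log(n)\sqrt{1\vee\log J}/(n\sqrt J)$ under \eqref{Stability}.

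The paper uses no net at all. For $\|t\|=1$, Cauchy--Schwarz gives $|U_{n,J}(t)|\le(\sum_{p,q}U_{n,J}^2(\varphi_p,\varphi_q))^{1/2}$. Each fixed pair is a canonical $U$-statistic of the i.i.d.\ vectors $X_{i,\bullet}\in\Rz^J$ with bounded kernel $g_{p,q}$. It is handled with $u=r\log(nJ)$, with $|\varphi_k|\le\theta$ for the constants $A,B$, and with $J\le n(n-1)/\log^2 n$ to bring the $B$- and $A$-terms down to the scale $\log(nJ)/\sqrt{n(n-1)J}$ of the $C,D$-terms. Summing $\delta_{p,q}^2$ gives $(m\vee L(m))^2$; the $m^2\theta^4$ coming from the $A$-terms is the origin of $m\vee L(m)$ in \eqref{Stability}. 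The union over $m^2\le(nJ)^{2\min(1/s,1)}$ pairs is exactly why $r$ carries the extra $2\min(1/s,1)$.

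There are two further corrections.

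First, your remainder bookkeeping does not yield $C_2/(nJ)$ with $p=4$. Even keeping the $1/L(m)$ available on $\Lambda_m$, your chain $\|\widehat b_m\|_R^2\le\frac{L(m)}{1-\rho}\|(\wPsi)^{-1}\|_{\rm op}^2\|\wPsi\|_{\rm op}\|\Yvec\|_{\Rbig}^2/(nJ)$ is of order $\frac{n}{\log^2(n)(1\vee\log J)}\|\Yvec\|^2$. With $\mathbb P(\Omega_m^c)\le 12(nJ)^{-4}$ the term is then of order $1/(J\log^2 n)$, not $1/(nJ)$. The paper's display via $(\wPsi)^{-1}\ePsi(\wPsi)^{-1}$ loses the same factor $n$ once its stated operator-norm bounds are multiplied out. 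The fix is to use a single power of $(\wPsi)^{-1}$:
\[
\|\widehat b_m\|_R^2\le\|\ePsi\|_{\rm op}\|(\wPsi)^{-1}\|_{\rm op}\|\widehat b_m\|_{n,J,R}^2, \qquad \|\widehat b_m\|_{n,J,R}^2\le\|\Yvec\|^2/((1-\rho)nJ),
\]
the second inequality by the projection property of Remark~\ref{rem::projection}. This gives $\|\widehat b_m\|_R^2\le\frac{4c^{\star\star}}{(1-\rho)^2}\frac{\|\Yvec\|^2}{\sqrt J\log(n)\sqrt{1\vee\log J}}$ on $\Lambda_m$, and then $p=4$ suffices.

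Second, take the threshold $3/4$ in $\Omega_m$, as the paper does, rather than $1/2$. Under \eqref{cstar}, Lemma~\ref{TheResult} only gives the ratio in $[1/4,7/4]$: Chernoff with $\delta=1/2$ on $Z_{n,J}$, plus two slacks of $1/8$. A threshold of $1/2$ would need a smaller Chernoff $\delta$ and hence a smaller constant than ${\mathbf c}_p^\star$. With $3/4$, your main-term computation gives exactly $34\inf_{t\in S_m}\|b-t\|_R^2+16\sigma^2m/(nJ)$. Moreover, $\|(\wPsi)^{-1}\|_{\rm op}\le 4\|(\ePsi)^{-1}\|_{\rm op}$ on $\Omega_m$, which matches the factor $4c^{\star\star}$ in $\Lambda_m$.
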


Let us mention that the condition $J \leq n(n-1)/\log^2(n)$ is analogous to the constraint $\sqrt{J}/n \rightarrow  0$ set in \cite{GL2026} for asymptotic normality results.

\begin{rem}\label{rem::constants}
    Note that depending on the choices of parameters $p$ and $r$, the condition (\ref{cstar}) on the constant $c^\star$ changes. More precisely, the larger $p$ and $r$, the smaller $c^\star$, leading to a more restrictive stability condition \eqref{Stability}. These constants appear due to the application of Lemma \ref{TheResult}, see below, to show the result for Theorem \ref{thm::theoretical-risk} with  $p=4$ and $r=4 + 2\min(1/s,1)$. For the adaptive result Theorem \ref{Adapt} in the next section, we apply the same Lemma for $p=  5  $ and $r=5 + 2\min(1/s,1)$, leading to a change in constants.
    We also note that, in comparison to \cite{comte2025non}, there is a change in the constant $c_0$ because we applied the deviation inequality for $U$-statistics of \cite{GineNickl2016Book} instead of the one of \cite{HoudreReynaud2003Ustat}. This is due to the fact that the variables are now $J$-dimensional vectors instead of real random variables for $J=1$ in \cite{comte2025non}.
\end{rem}

We provide a proof of Theorem \ref{thm::theoretical-risk} showing the main steps of the proof which relies on Lemma \ref{TheResult}; this Lemma is rather difficult to demonstrate and its proof is referred to Section \ref{proofs}.

\begin{proof}[Proof of Theorem \ref{thm::theoretical-risk}]
The first main idea for the proof of Theorem \ref{thm::theoretical-risk} is to decompose the risk on the set $\Omega_m$ on which the empirical norm and the theoretical norm are equivalent, i.e. 
\begin{align*}
    \Omega_m := \left\{\forall t\in S_m, t\not= 0, \left\vert \frac{\Vert t\Vert^2_{n,J,R}}{\Vert t\Vert_{R}^2} - 1 \right\vert \leq \frac{3}{4} \right\}=\left\{ \left\| (\Psi_m^{(R)})^{-1/2} \widehat \Psi_m^{(R)}  (\Psi_m^{(R)})^{-1/2}- {\rm Id}_m  \right\|_{{\rm op}} \leq \frac 34\right\}.
\end{align*}
and its complement $\Omega_m^c$. The main task, here is to bound the probability of the complement, and therefore, prove the following Lemma.
\begin{lem}\label{TheResult}
Let Assumption \ref{ass::theoretical-bound} be satisfied. For ${\mathfrak c}_r$ and ${\mathbf c}_p^\star$ defined in  \eqref{eq::constants-def} with arbitrary $p\geq 1$ and $r\geq 2$, assume that for the constant $c^\star$ of stability condition \eqref{Stability} holds
$$c^\star \leq \frac 1{8{\mathfrak c}_r} \wedge \frac{{\mathbf c}_p^\star}3.$$
Then, we have for $n\geq 192/\rho$ and $J \leq n(n-1)/\log^2(n)$ that
$${\mathbb P}(\Omega_m^c)\leq 12 (nJ)^{-\min( p , r-2\min(1/s,1))}.$$
\end{lem}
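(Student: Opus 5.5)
We bound $\mathbb P(\Omega_m^c)$ by working with the normalized deviation $\sup_{t\in S_m,\|t\|_R=1}|\|t\|_{n,J,R}^2-1|$ and splitting it along the decomposition of Lemma \ref{Lemm2}. Since $\|t\|_R^2=\alpha_n{\rm Var}(t(X_{1,1}))+c_n(\E[t(X_{1,1})])^2$, for $\|t\|_R=1$ we get
\begin{align*}
\|t\|_{n,J,R}^2-1=\alpha_n\bigl(Z_{n,J}(t)-{\rm Var}(t(X_{1,1}))\bigr)+2c_n\E[t(X_{1,1})]V_{n,J}(t)+(n-1)\beta_nU_{n,J}(t).
\end{align*}
Hence $\Omega_m^c$ is contained in the union of three events, each asking that one of the three terms exceed a fraction of $3/4$ (say $1/4$ each) uniformly over the unit sphere of $(S_m,\|\cdot\|_R)$. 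The goal is to show that each of them has probability at most $4(nJ)^{-\min(p,r-2\min(1/s,1))}$, which sums to the constant $12$.

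\textbf{The $Z$-term.} We write $Z_{n,J}(t)=\mathbf a^\top\bigl(\frac1{nJ}\sum_{i,j}\mathbf W_{i,j}\mathbf W_{i,j}^\top\bigr)\mathbf a$ with $\mathbf W_{i,j}=(\varphi_k(X_{i,j})-\E\varphi_k(X_{1,1}))_k$. This is a sum of $nJ$ i.i.d. positive semidefinite rank-one matrices with mean $\Psi_m$. Normalizing by $\Psi_m^{-1/2}$, each summand has operator norm at most $4L(m)\|\Psi_m^{-1}\|_{\rm op}/(nJ)$. The matrix Chernoff bound (Lemma \ref{lem::tropp-chernoff}) then gives a deviation probability of order $m\exp(-c\,nJ/(L(m)\|\Psi_m^{-1}\|_{\rm op}))$. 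Condition \eqref{Stability} makes the exponent at least of order $\sqrt J\log(n)\sqrt{1\vee\log J}/c^\star$, which is larger than $\log(nJ)$. The choice $c^\star\le \mathbf c_p^\star/3$, with $\mathbf c_p^\star$ built from the Chernoff constant $1-3\log(3/2)$, should then yield $(nJ)^{-p}$ after absorbing $m\le nJ$. We then pass from $\Psi_m$ to $\Psi_m^{(R)}$, using $\alpha_n\Psi_m\preceq\Psi_m^{(R)}$ and $\alpha_n\approx 1/(1-\rho)$. The hypothesis $n\ge 192/\rho$ controls the small correction $c_n$.

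\textbf{The $V$-term.} The coefficient $c_n\approx 1/(n\rho)$ is small, and on the unit sphere $|\E t(X_{1,1})|\le c_n^{-1/2}$. So it suffices to bound $\sup_t|V_{n,J}(t)|$, which is at most $\|\Psi_m^{-1/2}\bar{\mathbf W}\|$ for the vector mean $\bar{\mathbf W}$ of the $nJ$ centered i.i.d. vectors. A Bernstein-type or vector Hoeffding bound gives $\sqrt{c_n}\,\sup_t|V_{n,J}(t)|=O\bigl(\sqrt{m\|\Psi_m^{-1}\|_{\rm op}\log(nJ)/(n^2J)}\bigr)$ with probability $1-(nJ)^{-p}$, and this is $o(1)$ under \eqref{Stability}. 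A cruder fallback is Markov's inequality on a high moment, which is where the exponent $r$ could also enter.

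\textbf{The $U$-term (main obstacle).} Here $(n-1)|\beta_n|\approx 1/(1-\rho)$ is not small, so we need $\sup_t|U_{n,J}(t)|\le c$ with high probability. We view $U_{n,J}$ as a single degenerate second-order $U$-statistic in the $n$ i.i.d. $J$-dimensional vectors $\mathbf X_{i,\bullet}$, with kernel $h(x,y)=\frac1J\sum_j\bar t(x_j)\bar t(y_j)$, where $\bar t=t-\E t(X_{1,1})$. The supremum over the sphere is the operator norm of the matrix $\frac{1}{n(n-1)J}\sum_{i\neq k}\sum_j\Psi_m^{-1/2}\mathbf W_{i,j}\mathbf W_{k,j}^\top\Psi_m^{-1/2}$. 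We would control this either by a chaining or $\varepsilon$-net argument over the $m$-dimensional sphere, with $\varepsilon$-net cardinality $e^{m\log(c)}$, combined with the Gin\'e--Nickl inequality (Theorem 3.4.8) for each fixed $t$, or by applying that inequality directly to a vector-valued version. The four quantities in that inequality (the sup bound $\|h\|_\infty$, $\E h^2$, the operator-type norm, and the sup-variance term) must be estimated: they are of order $L(m)\|\Psi_m^{-1}\|/1$, $1/J$, $1/\sqrt J$ and $L(m)\|\Psi_m^{-1}\|/J$ respectively. Averaging over $j$ gains a factor $\sqrt J$ in the leading Gaussian-chaos terms, but not in the $L^\infty$ term. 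This is exactly why the stability condition involves $n\sqrt J$ rather than $nJ$, and where the condition $J\le n(n-1)/\log^2 n$ is needed, to keep the $\sqrt J$ terms dominant.

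With deviation level $x=r\log(nJ)$ and $c^\star\le 1/(8\mathfrak c_r)$, the bound $(n-1)|\beta_n|\sup|U|\le 1/4$ would follow. The net cardinality, which involves $m\le (nJ)^{\min(1/s,1)}$ to the power $2$ via $L(m)=m^s$, costs the factor that degrades $r$ into $r-2\min(1/s,1)$. We expect this step to be the hardest. Getting uniformity over $S_m$ with constants compatible with $\mathfrak c_r$ is delicate, and so is the tail of the $J$-vector kernel.
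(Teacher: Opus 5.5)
Your $Z$-term argument (matrix Chernoff over the $nJ$ i.i.d.\ rank-one matrices, with \eqref{Stability} and $J\le n(n-1)/\log^2(n)$ giving an exponent of order $\log(nJ)$) is the paper's. The $U$-term, which you single out as the crux, is not closed, and the uniformity mechanism you describe does not work. Any $\varepsilon$-net of the unit sphere of an $m$-dimensional space has cardinality $e^{cm}$, not of order $m^2$. A union bound over it can therefore not be absorbed into $(nJ)^{-r}$ at level $u=r\log(nJ)$ unless $m\lesssim\log(nJ)$. If you raise the level to $u\asymp m+r\log(nJ)$, the sup-norm term $Au^2/(n(n-1))$ of Gin\'e--Nickl's Theorem 3.4.8 becomes of order $L(m)\|\Psi_m^{-1}\|_{\rm op}\,m^2/n^2$, since $A\asymp L(m)\|\Psi_m^{-1}\|_{\rm op}$ for a direction with $\|t\|_R=1$. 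Condition \eqref{Stability} does not make this small. So attributing the loss $r\mapsto r-2\min(1/s,1)$ to the net is inconsistent, and the ``vector-valued version'' is left unspecified.

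The missing step is the paper's reduction. Write $U_{n,J}(t)=\mathbf a^\top\mathbf U\mathbf a$ with $\mathbf U=(U_{n,J}(\varphi_p,\varphi_q))_{1\le p,q\le m}$, and use $\sup_{\|\mathbf a\|=1}|\mathbf a^\top\mathbf U\mathbf a|\le\|\mathbf U\|_F$. Each entry is a real canonical $U$-statistic in the i.i.d.\ vectors $\mathbf X_{i,\bullet}=(X_{i,1},\dots,X_{i,J})$, with symmetrized kernel $g_{p,q}$ bounded by $8\theta^2$; this is where $\sup_{x\in A}|\varphi_j(x)|\le\theta$ from Assumption \ref{ass::theoretical-bound}(i) enters. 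Apply Theorem 3.4.8 entrywise at level $r\log(nJ)$, sum the squared thresholds, and take a union bound over the $m^2$ entries. This gives $\|\mathbf U\|_F\lesssim r^2(1+\theta^2)\max(m,L(m))\log(nJ)/\sqrt{n(n-1)J}$ outside an event of probability at most $5.44\,m^2(nJ)^{-r}\le 5.44\,(nJ)^{-(r-2\min(1/s,1))}$. This is the source of both the factor $\max(m,L(m))$ in \eqref{Stability} and the degradation of $r$. Then $\|t\|^2\le\|\Psi_m^{-1}\|_{\rm op}{\rm Var}(t(X_{1,1}))$, $(n-1)|\beta_n|\le\alpha_n$ and $\alpha_n{\rm Var}(t(X_{1,1}))\le\|t\|_R^2$ give $(n-1)|\beta_n|\,|U_{n,J}(t)|\le\mathfrak c_r c^\star\|t\|_R^2\le\frac18\|t\|_R^2$.

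Two smaller points. First, the $V$-term needs no separate concentration inequality. By Cauchy--Schwarz over all $(i,j)$, $V_{n,J}(t)^2\le Z_{n,J}(t)$, so on the $Z$-event it is controlled deterministically. That is where $n\ge192/\rho$ is used (not in the $Z$-term), giving $2c_n|\E[t(X_{1,1})]|\,|V_{n,J}(t)|\le\frac18\|t\|_R^2$. Your extra Bernstein event could work but adds a probability term and constants.

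Second, with the stated constants the budget must be $\frac12+\frac18+\frac18$, not $\frac14$ each. The constant $1-3\log(3/2)$ in $\mathbf c_p^\star$ is, up to sign and a factor $2$, the Chernoff exponent at $\delta=1/2$. At $\delta=1/4$ the exponent is roughly four times smaller, so $c^\star\le\mathbf c_p^\star/3$ would no longer give $(nJ)^{-p}$. Likewise, $c^\star\le 1/(8\mathfrak c_r)$ is tuned to a $U$-term of at most $\frac18\|t\|_R^2$. With this split, $\Omega_m^{(1)}\cap\Omega_m^{(2)}\subset\Omega_m$, and the total is $2(nJ)^{-p}+5.44(nJ)^{-(r-2\min(1/s,1))}\le 12(nJ)^{-\min(p,r-2\min(1/s,1))}$.
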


For the other points of the proof, we follow the line of the proof of Theorem 2 in \cite{comte2025non}, which is standard. More precisely, we get
\begin{align}\label{eq::risk-bound-step}
{\mathbb E}\left[\| \widetilde b_m-b\|^2_R\right] \leq 34 \inf_{t\in S_m} \|t-b\|_R^2 + 16 \sigma^2 \frac{m}{n  J} + 4 {\mathbb E}[\|\widehat b_m\|_R^2 {\mathbf 1}_{\Lambda_m} {\mathbf 1}_{\Omega_m^c} ] + \|b\|_R^2 (4{\mathbb P}(\Omega_m^c) + {\mathbb P}(\Lambda_m^c)).
\end{align}
To control the third summand, we first see that
$$\|\widehat{b}_m\|_R^2 = \frac 1{(nJ)^2} \Yvec^\top {\mathbf R}^{-1} \widehat \Phi_m (\widehat \Psi_m^{(R)})^{-1} \Psi_m^{(R)} (\widehat \Psi_m^{(R)})^{-1} \widehat\Phi_m^\top {\mathbf R}^{-1} \Yvec.$$ Next, we use that on $\Lambda_m$, we have
$$  \Vert (\wPsi)^{-1} \Vert_{\text{op}}^2  \leq 16 (c^{**})^2  \left(\frac{n \sqrt{J}}{L(m) \log(n)\sqrt{1\vee\log(J)}}\right)^2,$$
and that $\|{\mathbf R}^{-1}\|_{\rm op} \leq 1/(1-\rho)$, $\| \Psi_m^{(R)}\|_{\rm op}\leq L(m)/(1-\rho)$ and $\|\widehat\Phi_m^\top \widehat \Phi_m\|_{\rm op}\leq nJ L(m)$.  We get
$$\|\widehat{b}_m\|_R^2 \leq 64 \frac{(c^\star)^2}{(1-\rho)^3} \frac 1{\log^2(n) (1\vee \log(J))} \|\Yvec \|^2. $$
Thus
\begin{eqnarray*} {\mathbb E}[\|\widehat b_m\|_R^2 {\mathbf 1}_{\Lambda_m} {\mathbf 1}_{\Omega_m^c} ] &\leq & 
64 \frac{(c^\star)^2}{(1-\rho)^3} \frac 1{\log^2(n) (1\vee \log(J))} {\mathbb E}[\|\Yvec \|^2{\mathbf 1}_{\Omega_m^c} ] \\ &\leq & 64 \frac{(c^\star)^2 {\mathbb E}^{1/2}[Y_{1,1}^4] }{1-\rho} \frac{nJ}{\log^2(n) (1\vee \log(J))} {\mathbb P}^{1/2}(\Omega_m^c).
\end{eqnarray*}
Further, we have $\Lambda_m^c\subseteq \Omega_m^c$ analogously to Proposition 4 in \cite{comte2025non}. 
Then, we apply Lemma \ref{TheResult} with $r=4 + 2\min(1/s,1)$ and $p=4$ to obtain 
\begin{align*}
    {\mathbb P}^{1/2}(\Omega_m^c) \leq \frac{\sqrt{12}}{(nJ)^2}.
\end{align*}
Plugging this into \eqref{eq::risk-bound-step}, the result follows, i.e.
\begin{align*}
    {\mathbb E}\left[\| \widetilde b_m-b\|^2_R\right] \leq 34 \left(\inf_{t\in S_m} \|t-b\|_R^2 +  \sigma^2 \frac{m}{n  J} \right) +\sqrt{12}\left(256 \frac{(c^\star)^2 {\mathbb E}^{1/2}[Y_{1,1}^4] }{1-\rho} + 8\|b\|_R^2\right) \frac{1}{nJ}.
\end{align*}
\end{proof}

\section{Adaptive Result}\label{Adaptive}

The goal now is to propose an adaptive estimator and provide upper bounds on the corresponding quadratic risk. The result is new, even for $J=1$ and requires to solve the dependency problem between the $u_{i,j}$'s when bounding the deviations of a key empirical process. For this, we define the collection of models
\begin{align*}
    \widehat{\mathcal{M}}_{n,J} := \left\{ m\in\mathbb{N}, m\leq nJ: L(m) (\Vert (\widehat{\Psi}_m^{(R)})^{-1}\Vert_{\text{op}}\vee 1) \leq \frac{4c^{**} n\sqrt{J}}{\log (n) \sqrt{\log(J)\vee 1}}\right\},
\end{align*}
and we set
\begin{align*}
    \widehat{m} := \arg\min_{m\in\widehat{\mathcal{M}}_{n,J}} \left\{ - \Vert \widehat{b}_m \Vert_{n,J,R}^2 + {\rm pen}(m) \right\}, \quad {\rm pen}(m)= \kappa \sigma^2 \frac{m}{nJ},
\end{align*}
where $\kappa$ is a numerical constant.
A theoretical counterpart for $ \widehat{\mathcal{M}}_{n,J}$ is defined by
\begin{align*}
    \mathcal{M}_{n,J} := \left\{ m\in\mathbb{N}, m\leq nJ: L(m)  (\Vert ({\Psi}_m^{(R)})^{-1}\Vert_{\text{op}}\vee 1) \leq \frac{c^{**} n\sqrt{J}}{\log (n) \sqrt{\log(J)\vee 1} }\right\}.
\end{align*}

We introduce the following ``light tail" assumption on the noise $\varepsilon_{1,1}$.
\begin{ass}\label{subExp}
    There exist  $c_1, c_2>0$, such that  for all $ K>0$ it holds that ${\mathbb P}(|\varepsilon_{1,1}|>K)\leq c_1e^{-c_2K}$.
\end{ass}
\begin{rem} We may have considered the following sub-Gaussian assumption (see \cite{BCVEsaim2001}):
\begin{equation}\label{HypSubGauss} \forall u \in {\mathbb R}, {\mathbb E}(e^{u\varepsilon_{1,1}})\leq e^{u^2s^2/2}. 
\end{equation}
This assumption is satisfied for $\varepsilon_{1,1}$  bounded by $B$ (with $s=B$) and Gaussian  $\varepsilon_{1,1}$ with $s^2={\rm Var}(\varepsilon_{1,1})$.
It is worth noting that (\ref{HypSubGauss}) implies 
\begin{equation}\label{MajProb} {\mathbb P}(|\varepsilon_{1,1}|> K) \leq 2 e^{-K^2/(2s^2)}.
\end{equation}
 See Section \ref{sec::equ-proof} for the proof of (\ref{MajProb}). Thus, Assumption \ref{subExp} is weaker and still allows us to capture bounded and Gaussian variables, but also exponential and Gamma mixtures. However, $\log^3(nJ)$ in the last term of the Inequality of Theorem \ref{Adapt} under Assumption \ref{subExp} would only be $\log^{3/2}(nJ)$ under (\ref{HypSubGauss}).\end{rem}

Under this additional assumption, we prove the following result for the proposed adaptive estimator of the regression function in empirical norm.

\begin{thm}\label{Adapt} Assume that the Assumptions of Theorem \ref{thm::theoretical-risk}  are fulfilled with $p=  5  $ and $r=5 + 2\min(1/s,1)$. Let Assumption \ref{subExp} be satisfied and ${\mathbb E}[b^4(X_{1,1})]<+\infty$. 
Then, for $n\geq n_0:=192/\rho$, there exists $\kappa_0>0$ such that for any $\kappa\geq \kappa_0$, 
$${\mathbb E}(\|\widehat b_{\widehat m} -b\|_{n,J,R}^2) \leq C\inf_{m\in {\mathcal M}_{n,J}} \left\{ 
 \inf_{t\in S_m} \|t-b\|_R^2 +  \kappa \sigma^2 \frac{m}{nJ}\right\} + C' \frac{\log^3(nJ)}{nJ}.$$
 where $C$ is a numerical constant ($C=8$ suits) and $C'$ a constant depending on the moments of $b(X_{1,1})$ and $\varepsilon_{1,1}$. The value $\kappa_0=64$ suits.
\end{thm}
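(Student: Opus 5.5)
The proof will follow the standard model selection scheme in empirical norm, with the common-noise dependence handled through the $\Rbig$-scalar product. Write $\langle u,v\rangle_{n,J,R} := \frac{1}{nJ} u(\Xvec)^\top \Rbig v(\Xvec)$. Since $\widehat b_m(\Xvec)=P_m\Yvec$ with $P_m$ the $\Rbig$-orthogonal projection of Remark~\ref{rem::projection}, we have $-\|\widehat b_m\|_{n,J,R}^2 = \min_{t\in S_m}\gamma_{n,J}(t)$. Hence $\widehat m$ minimizes $\gamma_{n,J}(\widehat b_m)+{\rm pen}(m)$.

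For every $m\in\widehat{\mathcal M}_{n,J}$ and every $t\in S_m$ this gives
\begin{align*}
\gamma_{n,J}(\widehat b_{\widehat m})+{\rm pen}(\widehat m)\le \gamma_{n,J}(t)+{\rm pen}(m).
\end{align*}
Using $\gamma_{n,J}(t)-\gamma_{n,J}(s)=\|t-b\|_{n,J,R}^2-\|s-b\|_{n,J,R}^2-2\nu_{n,J}(t-s)$ with $\nu_{n,J}(t)=\frac{1}{nJ}\uvec^\top\Rbig t(\Xvec)$, we obtain
\begin{align*}
\|\widehat b_{\widehat m}-b\|_{n,J,R}^2\le \|t-b\|_{n,J,R}^2+{\rm pen}(m)+2\nu_{n,J}(\widehat b_{\widehat m}-t)-{\rm pen}(\widehat m).
\end{align*}
Next write $2\nu_{n,J}(\widehat b_{\widehat m}-t)\le \frac14\|\widehat b_{\widehat m}-t\|_{n,J,R}^2+4\sup_{u\in B_{m,\widehat m}}\nu_{n,J}^2(u)$, where $B_{m,m'}$ is the unit ball of $S_m+S_{m'}$ in $\|\cdot\|_{n,J,R}$. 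Because the models are nested, $S_m+S_{m'}=S_{m\vee m'}$. The quadratic term is then absorbed by the triangle inequality.

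The key quantity is $\sup_{u\in B_{m'}}\nu_{n,J}^2(u)$. Conditionally on $\Xvec$ it equals $\frac{1}{(nJ)^2}\uvec^\top \Rbig\widehat\Phi_{m'}(\widehat\Psi^{(R)}_{m'})^{-1}\widehat\Phi_{m'}^\top\Rbig\uvec=\frac1{nJ}\uvec^\top \Rbig P_{m'}\uvec$. Set $\mathbf w=\sigma^{-1}\mathbf R^{-1/2}\uvec$; this vector has uncorrelated coordinates, but they are not independent, since $\mathbf w$ mixes $\varepsilon_{0,j}$ with the $\varepsilon_{i,j}$. The quantity becomes $\frac{\sigma^2}{nJ}\mathbf w^\top \Pi_{m'}\mathbf w$ with $\Pi_{m'}$ a Euclidean orthogonal projection of rank $m'$ and conditional mean $\sigma^2 m'/(nJ)$.

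To avoid relying on independence of $\mathbf w$, I would instead write $\uvec=\sqrt\rho\,\boldsymbol\varepsilon_0\otimes\mathbf 1_n+\sqrt{1-\rho}\,\boldsymbol\varepsilon$, treat the two pieces separately, and use $(a+b)^2\le 2a^2+2b^2$.
\begin{itemize}
\item The idiosyncratic part $\boldsymbol\varepsilon$ has i.i.d.\ coordinates. Under Assumption~\ref{subExp}, truncate at level $K=c\log(nJ)$ and apply a Hanson--Wright or Talagrand-type bound to the quadratic form in the projection $\mathbf R^{-1/2}\Pi_{m'}\mathbf R^{-1/2}$. Its operator norm is at most $1/(1-\rho)$ and its trace is of order $m'$.
\item The common part involves only $J$ variables $\varepsilon_{0,j}$. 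The vector $\mathbf 1_n$ is an eigenvector of $R^{-1}$ with eigenvalue $c_n=1/(1+(n-1)\rho)$. The resulting quadratic form in $\boldsymbol\varepsilon_0\in\Rz^J$ therefore has a matrix with trace bounded by a constant times $m'/(nJ)$ times $\rho c_n n\le 1$. Its deviation is controlled in the same way.
\end{itemize}

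This yields a bound $\mathbb E[(\sup_{B_{m'}}\nu^2-p(m'))_+]\le C\log^3(nJ)e^{-cm'}/(nJ)$ with $p(m')=\kappa_0\sigma^2 m'/(nJ)/8$. The truncation remainder, handled with $\mathbb P(|\varepsilon|>K)\le c_1e^{-c_2K}$, produces the $\log^3(nJ)/(nJ)$ term. Summing over $m'\le nJ$ gives Lemma~\ref{lem::bound-emp-process}, provided $\kappa\ge\kappa_0$ makes $8p(m\vee\widehat m)\le{\rm pen}(m)+{\rm pen}(\widehat m)$.

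Finally, randomness of the model collection is handled by splitting on $\Xi:=\bigcap_{m}\Omega_m\cap\{\mathcal M_{n,J}\subseteq\widehat{\mathcal M}_{n,J}\}$. On $\Xi$, the previous bounds apply for any $m\in\mathcal M_{n,J}$, and taking expectations with $\mathbb E\|t-b\|_{n,J,R}^2=\|t-b\|_R^2$ gives the main term with $C=8$. The inclusion $\{m\in\mathcal M_{n,J}\}\cap\Omega_m\Rightarrow m\in\widehat{\mathcal M}_{n,J}$ follows as in the proof of Theorem~\ref{thm::theoretical-risk}, because on $\Omega_m$ we have $\|(\widehat\Psi_m^{(R)})^{-1}\|_{\rm op}\le 4\|(\Psi_m^{(R)})^{-1}\|_{\rm op}$.

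On $\Xi^c$, use $\|\widehat b_{\widehat m}-b\|_{n,J,R}^2\le 2\|\Yvec\|^2/((1-\rho)nJ)+2\|b\|_{n,J,R}^2$, since $P$ is an $\Rbig$-contraction. Cauchy--Schwarz with $\mathbb E[b^4]<\infty$ and the fourth moments of $\varepsilon$ then bounds this contribution. By Lemma~\ref{TheResult} with $p=5$ and a union bound over $m\le nJ$, $\mathbb P(\Xi^c)\le 12\,nJ\,(nJ)^{-5}$, which is more than enough. The main obstacle is the deviation of the dependent quadratic form, specifically showing that the common-noise component does not inflate the variance beyond order $m'/(nJ)$. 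I would first try to exploit the exact eigenstructure of $R^{-1}$ on $\mathbf 1_n$ to settle this.
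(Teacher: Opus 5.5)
Your skeleton is the paper's: the contrast inequality with the $\frac14$-absorption, an event on which $\mathcal M_{n,J}\subseteq\widehat{\mathcal M}_{n,J}$, and a separate bound on its complement. The technical ingredients differ, however.

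\textbf{Empirical process.} The paper does not split $\uvec$. It writes $\uvec=\mathbf M\varepsilon_{\bullet,\bullet}$ with the $(n+1)J$ i.i.d.\ variables $\varepsilon_{i,j}$, $i=0,\dots,n$, so that conditionally on $\Xvec$ the process $\nu_{n,J}$ is linear in independent variables. It then truncates at $K=4\log(nJ)/c_2$ and applies Talagrand's inequality with $\mathbb H^2=\sigma^2m'/(nJ)$, $v=\sigma^2$, $M_1=2K\sqrt{nJ}$.

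Your projection representation $\frac{\sigma^2}{nJ}\mathbf w^\top\Pi_{m'}\mathbf w$ is correct, and so are your operator-norm and trace bounds ($1/(nJ)$ and $m'/(nJ)$ for both pieces). The split is unnecessary, though: since $\mathbf M\mathbf M^\top=\mathbf R$, the single matrix $\mathbf M^\top\mathbf R^{-1/2}\Pi_{m'}\mathbf R^{-1/2}\mathbf M$ acting on $\varepsilon_{\bullet,\bullet}$ already has operator norm at most $1$ and trace $m'$.

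Your $\log^3(nJ)$ is asserted rather than derived. After truncation at $K\asymp\log(nJ)$, a plain Hanson--Wright bound involves $\psi_2$-norms of order $K$. At your threshold it gives tails like $e^{-cm'/K^4}$, and after integrating and summing over $m'$ a higher power of $\log(nJ)$. In the paper the logarithm comes from summing the Talagrand term $K^2e^{-c\sqrt{m'}/K}$ over $m'$, not from the truncation remainder, which is $O((nJ)^{-2})$. That sum is in fact of order $K^4$, not the $K^3$ written there.

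\textbf{Complement term.} Here your treatment is cleaner than the paper's. The $\Rbig$-contraction property of $P$ gives $\|\widehat b_{\widehat m}\|_{n,J,R}^2\le\|\Yvec\|^2/((1-\rho)nJ)$, so you only need probability $O((nJ)^{-2})$ for the bad event. The paper instead goes through $\widehat m\in\widehat{\mathcal M}_{n,J}$, loses a factor $n\sqrt J$, and needs $O((nJ)^{-4})$. Taking $t$ deterministic likewise spares you Proposition~\ref{prop::riskbound-empirical}.

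\textbf{A step that fails as written.} You intersect $\Omega_m$ over all $m\le nJ$ and apply Lemma~\ref{TheResult} to each $m$. That lemma requires the stability condition \eqref{Stability} for the given $m$, which cannot hold for large $m$ (it forces $m^{s\vee 1}\le n\sqrt J$). For $m$ of order $nJ$, $\wPsi$ behaves like a square sample covariance matrix and is badly conditioned, so $\Omega_m$ fails with high probability. Hence $\mathbb P(\Xi^c)\le 12\,nJ\,(nJ)^{-5}$ is unjustified, and with it your control of the complement.

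The repair is easy in your framework. To get $\mathcal M_{n,J}\subseteq\widehat{\mathcal M}_{n,J}$ you only need $\Omega_m$ for $m\in\mathcal M_{n,J}$. Because your deviation bound is conditional on $\Xvec$, it can be summed over all $m'\le nJ$, so you never need $\widehat m$ to lie in a deterministic collection. (The paper intersects over $\mathcal M^+_{n,J}$ precisely to obtain $\widehat{\mathcal M}_{n,J}\subset\mathcal M^+_{n,J}$.) Restrict the intersection to $\mathcal M_{n,J}$, under the standing stability assumption as the paper does for $\mathcal M^+_{n,J}$. Then $\mathrm{card}(\mathcal M_{n,J})\le nJ$ and Lemma~\ref{TheResult} with $p=5$ give $\mathbb P(\Xi^c)\le 12(nJ)^{-4}$, which is more than enough.
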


The result of Theorem \ref{Adapt} states that the estimator $\widehat b_{\widehat m}$ automatically reaches the squared bias/variance compromise in the collection ${\mathcal M}_{n,J}$, up to the multiplicative constant $C$ (the closer to 1 the better) and the negligible additive residuals of order $\log^3(nJ)/(nJ)$. In practice, the implementation of the procedure requires a fixed value for $\kappa$, which is done from preliminary simulation experiments. This question appears in the theory of model selection by penalization since the beginning, see \cite{BBM99}, and concrete methods for doing this are given in \cite{Baudry12}.\\

We give the main arguments of the proof of Theorem \ref{Adapt}, which is quite complex and we postpone the proofs of auxiliary results to Section \ref{sec::proofs-lemmas-adaptive}.

\begin{proof}[Proof of Theorem \ref{Adapt}] 
    Let us define
\begin{align*}
    \mathcal{M}_{n,J}^+ := \left\{ m\in\mathbb{N}, m\leq nJ, \;  L(m)  (\Vert ({\Psi}_m^{(R)})^{-1}\Vert_{\text{op}}\vee 1) \leq \frac{7 c^{\star\star} n\sqrt{J}}{\log (n) \sqrt{\log(J)\vee 1} }\right\}
\end{align*}
and 
\begin{align}
\Omega_{n,J} :=\cap_{m\in {\mathcal M}_{n,J}^+} \Omega_m.\label{eq::Omega-union}
\end{align}

We now split the risk on the set $\Omega_{n,J}$ to get
\begin{align}
    {\mathbb E}[\|\widehat b_{\widehat m} -b\|_{n,J,R}^2] = {\mathbb E}[\|\widehat b_{\widehat m} -b\|_{n,J,R}^2{\mathbf 1}_{\Omega_{n,J}}]  + {\mathbb E}[\|\widehat b_{\widehat m} -b\|_{n,J,R}^2{\mathbf 1}_{\Omega_{n,J}^c}].\label{eq::omega-risk-decomp}
\end{align}
To bound the risk of the estimator on $\Omega_{n,J}$, we also define the following set
\begin{align}
    \Xi_{n,J} = \{ \mathcal{M}_{n,J} \subset \widehat{\mathcal{M}}_{n,J} \subset \mathcal{M}_{n,J}^+\}.\label{eq::Xiset}
\end{align}
\begin{lem}\label{OmegaXi}
Under stability condition \eqref{Stability} it holds for $\Omega_{n,J}$ and $\Xi_{n,J}$ defined in \eqref{eq::Omega-union} and \eqref{eq::Xiset}, respectively, that 
\begin{displaymath}
\Omega_{n,J}\subset\Xi_{n,J} .
\end{displaymath}
\end{lem}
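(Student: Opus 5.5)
We need to show that on $\Omega_{n,J}$ both inclusions ${\mathcal M}_{n,J}\subset\widehat{\mathcal M}_{n,J}$ and $\widehat{\mathcal M}_{n,J}\subset{\mathcal M}_{n,J}^+$ hold. The basic tool is a comparison of operator norms of inverses: on $\Omega_m$ we have $\|(\ePsi)^{-1/2}\wPsi(\ePsi)^{-1/2}-{\rm Id}_m\|_{\rm op}\le 3/4$. Hence all eigenvalues of $(\ePsi)^{-1/2}\wPsi(\ePsi)^{-1/2}$ lie in $[1/4,7/4]$. Writing $(\wPsi)^{-1}=(\ePsi)^{-1/2}\big((\ePsi)^{-1/2}\wPsi(\ePsi)^{-1/2}\big)^{-1}(\ePsi)^{-1/2}$, the middle factor has norm at most $4$, so $\|(\wPsi)^{-1}\|_{\rm op}\le 4\|(\ePsi)^{-1}\|_{\rm op}$. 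Symmetrically, $(\ePsi)^{-1}=(\ePsi)^{-1/2}(\ePsi)^{-1/2}$ together with $\wPsi\le \frac74\ePsi$ in Loewner order gives $(\ePsi)^{-1}\le \frac74(\wPsi)^{-1}$. Hence $\|(\ePsi)^{-1}\|_{\rm op}\le \frac74\|(\wPsi)^{-1}\|_{\rm op}$. Both bounds persist after taking $\vee 1$, since $4\ge1$ and $7/4\ge 1$.

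For the first inclusion, take $m\in{\mathcal M}_{n,J}$. Since $c^{**}\le 7c^{**}$, we have ${\mathcal M}_{n,J}\subset{\mathcal M}^+_{n,J}$, so $\Omega_m$ holds on $\Omega_{n,J}$. Then $L(m)(\|(\wPsi)^{-1}\|_{\rm op}\vee1)\le 4L(m)(\|(\ePsi)^{-1}\|_{\rm op}\vee1)\le 4c^{**}n\sqrt J/(\log(n)\sqrt{\log(J)\vee1})$. Hence $m\in\widehat{\mathcal M}_{n,J}$.

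For the second inclusion, take $m\in\widehat{\mathcal M}_{n,J}$. Here there is the main subtlety: to use $\Omega_m$ we need to know $m\in{\mathcal M}^+_{n,J}$, which is what we want to prove. I would handle this by monotonicity in $m$. Because the spaces $S_m$ are nested, $\|(\ePsi)^{-1}\|_{\rm op}=\sup_{t\in S_m}\E[t^2]... $, more precisely $1/\lambda_{\min}(\ePsi)=\sup_{t\in S_m,\|{\mathbf a}\|=1}1/\|t\|_R^2$, which is nondecreasing in $m$; the same holds for $\wPsi$ and for $L(m)$. Thus the sets ${\mathcal M}_{n,J}^+$ and $\widehat{\mathcal M}_{n,J}$ are initial segments of $\{1,\dots,nJ\}$. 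Argue by contradiction: suppose some $m\in\widehat{\mathcal M}_{n,J}\setminus{\mathcal M}^+_{n,J}$, and let $m_0=\max{\mathcal M}^+_{n,J}<m$. Then $m_0+1\in\widehat{\mathcal M}_{n,J}$ by the initial-segment property. The problem is that $\Omega_{m_0+1}$ is not available. To get around this, I would use $\Omega_{m_0}$ together with the control of one extra dimension. Alternatively, and more simply, I would check whether the paper's intended route is that $L(m)$ and the stability constraint (Assumption \ref{ass::theoretical-bound}(ii) via Proposition \ref{prop::new-prop3}) ensure every $m\le nJ$ relevant lies in ${\mathcal M}^+_{n,J}$. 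If ${\mathcal M}^+_{n,J}$ contains all $m$ with $\Omega_m$ defined, then the direct argument applies: on $\Omega_m$, $L(m)(\|(\ePsi)^{-1}\|_{\rm op}\vee1)\le\frac74 L(m)(\|(\wPsi)^{-1}\|_{\rm op}\vee1)\le 7c^{**}n\sqrt J/(\log(n)\sqrt{\log(J)\vee1})$, giving exactly the constant $7=\frac74\cdot4$ in ${\mathcal M}^+_{n,J}$. This matching of constants confirms the intended computation.

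The main obstacle is therefore the circularity in the second inclusion: the equivalence event $\Omega_m$ is only imposed for $m\in{\mathcal M}^+_{n,J}$. I would first try to resolve it through the nestedness/monotonicity argument above, comparing at the boundary index. If that fails, I would rely on the convention, implicit in the stability condition \eqref{Stability} and Proposition \ref{prop::new-prop3}, that the models considered satisfy the theoretical constraint, so that $\Omega_m$ is in force. The remaining computation is then the two-line eigenvalue comparison above.
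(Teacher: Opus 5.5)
\textbf{A gap in the second inclusion.} Your Loewner comparison matches the paper's argument, and so does your proof of $\mathcal M_{n,J}\subset\widehat{\mathcal M}_{n,J}$. The inclusion $\widehat{\mathcal M}_{n,J}\subset\mathcal M^+_{n,J}$, however, is not proved, and you identified the reason correctly. The bound $\|(\Psi_m^{(R)})^{-1}\|_{\rm op}\le\frac74\|(\widehat\Psi_m^{(R)})^{-1}\|_{\rm op}$ holds only on $\Omega_m$, i.e.\ for $m\in\mathcal M^+_{n,J}$, and that membership is exactly what has to be shown.

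Neither of your remedies closes this.
\begin{itemize}
\item Your initial-segment reduction is correct. By interlacing, $L(m)(\|(\widehat\Psi_m^{(R)})^{-1}\|_{\rm op}\vee 1)$ and its theoretical analogue are nondecreasing in $m$. Hence the inclusion fails iff $m^\ast:=\max\mathcal M^+_{n,J}+1$ lies in $\widehat{\mathcal M}_{n,J}$. But $\Omega_{m^\ast-1}$ constrains only the upper-left block of $\widehat\Psi^{(R)}_{m^\ast}$. Nothing in $\Omega_{n,J}$ controls its new row and column, so there is no ``control of one extra dimension'' to invoke.
\item The fallback convention assumes $m\in\mathcal M^+_{n,J}$, which is the conclusion.
\end{itemize}
The paper's proof has the same hole. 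It derives both comparisons for $m\in\mathcal M^+_{n,J}$ only and then writes ``Therefore, $\omega\in\Xi_{n,J}$''. The constant matching $7=\frac74\cdot 4$ is indeed the intended computation, but it presupposes $\Omega_m$ for the $m\in\widehat{\mathcal M}_{n,J}$ at hand.

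\textbf{How the gap could be closed.} An extra probabilistic ingredient is needed; the inclusion does not follow deterministically from $\Omega_{n,J}$ as defined. Set $N:=n\sqrt J/(\log(n)\sqrt{\log(J)\vee 1})$.
\begin{itemize}
\item \emph{Add one event.} If $m^\ast\in\widehat{\mathcal M}_{n,J}\setminus\mathcal M^+_{n,J}$, then $\lambda_{\min}(\widehat\Psi^{(R)}_{m^\ast})\ge L(m^\ast)/(4c^{\star\star}N)>\frac74\lambda_{\min}(\Psi^{(R)}_{m^\ast})$. Take a fixed unit eigenvector $\mathbf a$ of $\Psi^{(R)}_{m^\ast}$ for its smallest eigenvalue, and set $t_{\mathbf a}=\sum_k a_k\varphi_k$. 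Then $\|t_{\mathbf a}\|^2_{n,J,R}\ge L(m^\ast)/(4c^{\star\star}N)>\frac74\|t_{\mathbf a}\|^2_R$. This is a one-sided upper deviation for a single deterministic function. You would add the complement of this event to $\Omega_{n,J}$ and bound its probability separately. Lemma \ref{TheResult} is not available for this, since stability fails at $m^\ast$.
\item \emph{Bypass the inclusion.} The inclusion $\widehat{\mathcal M}_{n,J}\subset\mathcal M^+_{n,J}$ is used only in the proof of Lemma \ref{lem::bound-emp-process}, to restrict the sum over $m'$ to $\mathcal M^+_{n,J}$. The conditional Talagrand bounds there do not use $\Omega_{m'}$. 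One can therefore sum over all $m'\le nJ$, and this half of the lemma becomes unnecessary for Theorem \ref{Adapt}.
\end{itemize}
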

\noindent For the contrast, it holds 
\begin{align*}
    \gamma_{n,J}(t) = \Vert b-t \Vert_{n,J,R}^2 - \Vert b\Vert_{n,J,R}^2 - 2\nu_{n,J} (t), \quad \nu_{n,J} (t) = \frac{1}{nJ} \uvec^\top \mathbf{R}^{-1} t(\Xvec). 
\end{align*}
Therefore, writing that for all $ m\in \widehat{\mathcal M}_{n,J}$ and $t\in S_m$ 
$$\gamma_{n,J}(\widehat b_{\widehat m})+ {\rm pen}(\widehat m) \leq \gamma_{n,J}(t) + {\rm pen}(m)$$
implies
$$\|\widehat b_{\widehat m}-b\|_{n,J,R}^2 \leq \|t-b\|_{n,J,R}^2 + {\rm pen}(m) + 2 \nu_{n,J}(\widehat b_{\widehat m}-t) -{\rm pen}(\widehat m).$$

Now, we work on $\Omega_{n,J}$, on which it holds by Lemma \ref{OmegaXi} that $\mathcal{M}_{n,J} \subset \widehat{\mathcal{M}}_{n,J}$. Consequently, the inequality holds for any $m\in {\mathcal M}_{n,J}$, and we  choose $t=\widehat b_m$.  We obtain, on $\Omega_{n,J}$ and for all $ m\in {\mathcal M}_{n,J}$
\begin{eqnarray}\nonumber  \|\widehat b_{\widehat m}-b\|_{n,J,R}^2 &\leq &\|\widehat b_m-b\|_{n,J,R}^2 + {\rm pen}(m) +  2 \nu_{n,J}(\widehat b_{\widehat m}-\widehat b_m) -{\rm pen}(\widehat m) \\ \nonumber &\leq & 
\|\widehat b_m-b\|_{n,J,R}^2 + {\rm pen}(m) + \frac 14 \|\widehat b_{\widehat m}-\widehat b_m\|_{n,J,R}^2 \\\label{DecompOmega}   && +  4 \left(\sup_{t\in S_m+S_{\widehat m}, \|t\|_{n,J,R}=1} \nu_{n,J}^2(t) - p(m,\widehat m) \right)_+  
+ 4 p(m, \widehat m)- {\rm pen}(\widehat m)
\end{eqnarray}
with 
\begin{equation}\label{defpm}
   p(m,m') := 8 \sigma^2 \frac{m\vee m'}{nJ}. 
\end{equation} 
Next, note that for {$\kappa \geq 32$} it holds that $ 4 p(m, \widehat m) \leq {\rm pen}(m) + {\rm pen}(\widehat m)$. Further, we have that
\begin{align*}
    \frac 14 \|\widehat b_{\widehat m}-\widehat b_m\|_{n,J,R}^2 \leq \frac 12 \|\widehat b_{\widehat m}- b\|_{n,J,R}^2 + \frac 12 \|b -\widehat b_m\|_{n,J,R}^2.
\end{align*}
Plugging both into \eqref{DecompOmega} and reorganizing the terms yields for $m\in\mathcal{M}_{n,J}$ on $\Omega_{n,J}$ that
\begin{align*}
     \|\widehat b_{\widehat m}-b\|_{n,J,R}^2 &\leq 3  \|\widehat b_{ m}- b\|_{n,J,R}^2 + 4 {\rm pen}(m) + 8 \left(\sup_{t\in S_m+S_{\widehat m}, \|t\|_{n,J,R}=1} \nu_{n,J}^2(t) - p(m,\widehat m) \right)_+ .
\end{align*}
Plugging this in turn into \eqref{eq::omega-risk-decomp}, we obtain for any $m\in\mathcal{M}_{n,J}$
\begin{align}
      {\mathbb E}[\|\widehat b_{\widehat m} -b\|_{n,J,R}^2] &\leq {\mathbb E}[\|\widehat b_{\widehat m} -b\|_{n,J,R}^2{\mathbf 1}_{\Omega_{n,J}^c}] + 3 {\mathbb E} [ \|\widehat b_{ m}- b\|_{n,J,R}^2] + 4 {\rm pen}(m) \nonumber\\
      &\qquad + 8 {\mathbb E} \left[\left(\sup_{t\in S_m+S_{\widehat m}, \|t\|_{n,J,R}=1} \nu_{n,J}^2(t) - p(m,\widehat m) \right)_+ {\mathbf 1}_{\Omega_{n,J}} \right].\label{eq::decomp-on-omega2}
\end{align}
We obtain the following two Lemmas to control the first and the last summand, see Section \ref{sec::proofs-lemmas-adaptive} for the proofs.
\begin{lem}\label{lem::bound-on-complement}
Let Assumption \ref{ass::theoretical-bound} be satisfied. For ${\mathfrak c}_r$ and ${\mathbf c}_p^\star$ defined in \eqref{eq::constants-def} with $p=  5 $ and $r=5 + 2\min(1/s,1)$, assume that for the constant $c^\star$ of stability condition \eqref{Stability} holds $c^\star \leq 1/(8{\mathfrak c}_r) \wedge {\mathbf c}_p^\star /3$. 
Further, assume that $\E[b^4(X_{1,1})], \E[Y_{1,1}^4] <+\infty$.
Then, for $n\geq 192/\rho $ and $J \leq n(n-1)/\log^2(n)$,  we have
    \begin{align*}
        {\mathbb E}[\|\widehat b_{\widehat m} -b\|_{n,J,R}^2{\mathbf 1}_{\Omega_{n,J}^c}] \leq \frac{C}{nJ}
    \end{align*}
    for a constant $C$  depending on $\rho$, $c^{\star\star}$, $s$, $\E[Y_{1,1}^4]$ and $\E[b^4(X_{1,1})]$.
\end{lem}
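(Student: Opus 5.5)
We bound $\|\widehat b_{\widehat m}-b\|_{n,J,R}^2$ by a quantity not depending on $\widehat m$, and then apply Cauchy--Schwarz against $\mathbf 1_{\Omega_{n,J}^c}$. Since $\widehat m\in\widehat{\mathcal M}_{n,J}$, the matrix $\widehat\Psi_{\widehat m}^{(R)}$ is invertible. By Remark \ref{rem::projection}, $\widehat b_{\widehat m}(\Xvec)=P_{\widehat m}\Yvec$, where $P_{\widehat m}$ is an orthogonal projection for the scalar product induced by $\Rbig$. Hence
$$\|\widehat b_{\widehat m}\|_{n,J,R}^2=\frac1{nJ}(P_{\widehat m}\Yvec)^\top\Rbig P_{\widehat m}\Yvec\le \frac1{nJ}\Yvec^\top \Rbig\Yvec\le \frac{1}{1-\rho}\frac1{nJ}\|\Yvec\|^2,$$
using $\|\Rbig\|_{\rm op}\le 1/(1-\rho)$. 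This bound holds uniformly in $\widehat m$, so the random model choice plays no role. Similarly, Lemma \ref{lem::norm-properties} gives
$$\|b\|_{n,J,R}^2\le \frac{1}{1-\rho}\frac{1}{nJ}\sum_{i,j}b^2(X_{i,j}).$$
Combining the two, $\|\widehat b_{\widehat m}-b\|_{n,J,R}^2\le \frac{2}{(1-\rho)nJ}\sum_{i,j}(Y_{i,j}^2+b^2(X_{i,j}))$.

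Next we take expectations on $\Omega_{n,J}^c$ and use Cauchy--Schwarz termwise. Since the $Y_{i,j}$ are identically distributed, as are the $X_{i,j}$,
$${\mathbb E}[\|\widehat b_{\widehat m}-b\|_{n,J,R}^2\mathbf 1_{\Omega_{n,J}^c}]\le \frac{2}{1-\rho}\left({\mathbb E}^{1/2}[Y_{1,1}^4]+{\mathbb E}^{1/2}[b^4(X_{1,1})]\right){\mathbb P}^{1/2}(\Omega_{n,J}^c).$$
It therefore suffices to show ${\mathbb P}(\Omega_{n,J}^c)\le C(nJ)^{-2}$.

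By the union bound, ${\mathbb P}(\Omega_{n,J}^c)\le\sum_{m\in\mathcal M_{n,J}^+}{\mathbb P}(\Omega_m^c)$, and $|\mathcal M_{n,J}^+|\le nJ$. The key point is that Lemma \ref{TheResult} can be applied to every $m\in\mathcal M^+_{n,J}$. These models satisfy \eqref{StabilityR} only with constant $7c^{\star\star}$ instead of $c^{\star\star}$, and they need not satisfy \eqref{Stability}. I would inspect the proof of Lemma \ref{TheResult} to check that it only uses the $\Psi_m^{(R)}$-version of the stability bound, possibly together with $m\le L(m)$-type bounds. The factor $7$ and the factor $2(1-\rho)$ from Proposition \ref{prop::new-prop3} would then be absorbed by the margin in the constraint $c^\star\le 1/(8\mathfrak c_r)\wedge \mathbf c_p^\star/3$. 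Alternatively, one could argue that the deviation bounds in that proof have enough slack when the exponents are chosen with $p=5$, $r=5+2\min(1/s,1)$. This verification is the step I expect to be the main obstacle, since the constants are tight.

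Granting this, each term satisfies ${\mathbb P}(\Omega_m^c)\le 12(nJ)^{-\min(5,5)}=12(nJ)^{-5}$. Summing over at most $nJ$ models gives ${\mathbb P}(\Omega_{n,J}^c)\le 12(nJ)^{-4}$, hence ${\mathbb P}^{1/2}(\Omega_{n,J}^c)\le\sqrt{12}(nJ)^{-2}\le \sqrt{12}/(nJ)$. This gives the claim with $C=\frac{2\sqrt{12}}{1-\rho}({\mathbb E}^{1/2}[Y_{1,1}^4]+{\mathbb E}^{1/2}[b^4(X_{1,1})])$. The dependence on $c^{\star\star}$ and $s$ enters only through the applicability of Lemma \ref{TheResult}.
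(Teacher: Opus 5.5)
Your argument is correct at the paper's level of rigor (see the second paragraph) and differs from the paper's in how $\|\widehat b_{\widehat m}\|_{n,J,R}^2$ is controlled.

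\textbf{Comparison with the paper's route.} The paper uses the same split $2\mathbb S_1+2\mathbb S_2$. For $\mathbb S_1$, however, it writes $\|\widehat b_{\widehat m}\|_{n,J,R}^2=(nJ)^{-2}\Yvec^\top\Rbig\widehat\Phi_{\widehat m}(\widehat\Psi_{\widehat m}^{(R)})^{-1}\widehat\Phi_{\widehat m}^\top\Rbig\Yvec$. It then bounds this using the constraint defining $\widehat{\mathcal M}_{n,J}$ together with $\|\widehat\Phi_{\widehat m}\widehat\Phi_{\widehat m}^\top\|_{\rm op}\le nJL(\widehat m)$. Compared with your bound, this loses a factor of order $n\sqrt J/(\log(n)\sqrt{\log(J)\vee 1})$. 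The paper therefore needs ${\mathbb P}(\Omega_{n,J}^c)\lesssim (nJ)^{-4}$. It obtains this from Lemma~\ref{TheResult} with $p=4+1/s$, $r=4+3/s$ and the count $\operatorname{card}(\mathcal M^+_{n,J})\le (7c^{\star\star}n\sqrt J)^{1/s}$; this is where $c^{\star\star}$ and $s$ enter its constant.

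\textbf{What your route buys.} You use that $\widehat b_{\widehat m}(\Xvec)$ is the $\Rbig$-orthogonal projection of $\Yvec$ (Remark~\ref{rem::projection}). This gives $\|\widehat b_{\widehat m}\|_{n,J,R}^2\le (nJ)^{-1}\Yvec^\top\Rbig\Yvec$ pathwise, uniformly in $\widehat m$, and without the stability constraint. Consequently ${\mathbb P}(\Omega_{n,J}^c)\lesssim (nJ)^{-2}$ suffices and the crude count $nJ$ is enough. You can also invoke Lemma~\ref{TheResult} with exactly the parameters $p=5$, $r=5+2\min(1/s,1)$ from the hypothesis. This is a real gain: when $s<1$, the paper's choices $p=4+1/s>5$ and $r=4+3/s>7$ would require a smaller $c^\star$ than the one assumed. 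Your route stays within the stated assumptions and gives a constant free of $c^{\star\star}$ and $s$.

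\textbf{The step you flag.} The paper does not resolve it either. It applies Lemma~\ref{TheResult} to each $m\in\mathcal M^+_{n,J}$ without comment, i.e.\ it tacitly reads Assumption~\ref{ass::theoretical-bound}(ii) as holding for all these models. So you are not missing anything the paper supplies.

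Your first suggested verification would not go through, however. The proof of Lemma~\ref{TheResult}, through Lemmas~\ref{Tropp} and~\ref{Omega2Last}, genuinely uses $\|\Psi_m^{-1}\|_{\rm op}$ and the factor $m\vee L(m)$ from \eqref{Stability}. Since $\Psi_m^{(R)}=\alpha_n\Psi_m+c_nu_mu_m^\top$, a bound on $\|(\Psi_m^{(R)})^{-1}\|_{\rm op}$ does not control $\|\Psi_m^{-1}\|_{\rm op}$. Proposition~\ref{prop::new-prop3} only goes in the other direction, and the rank-one term can keep $\Psi_m^{(R)}$ invertible even when $\Psi_m$ is singular. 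Membership in $\mathcal M^+_{n,J}$ also involves $L(m)$ rather than $m\vee L(m)$. The honest fix is to assume \eqref{Stability} for every $m\in\mathcal M^+_{n,J}$, which is what both arguments effectively do.
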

\begin{lem}\label{lem::bound-emp-process}
Under Assumption \ref{subExp},  for $p(m,m')$ defined by (\ref{defpm}) and any $m\in\mathcal{M}_{n,J}$ it holds that
\begin{align*}
     {\mathbb E} \left[\left(\sup_{t\in S_m+S_{\widehat m}, \|t\|_{n,J,R}=1} \nu_{n,J}^2(t) - p(m,\widehat m) \right)_+ {\mathbf 1}_{\Omega_{n,J}} \right] \leq C\frac{\log^3(nJ)}{nJ}
\end{align*}
for a constant $C$ depending on $c_1, c_2$ (the constants of Assumption \ref{subExp}), $c^{\star \star}$, $\rho$ and ${\mathbb E}(Y_{1,1}^4)$.
\end{lem}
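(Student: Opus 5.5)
The plan is to reduce the supremum to a quadratic form in the noise, conditionally on $\Xvec$. Then I control it with a Hanson--Wright type inequality applied to the independent variables $(\varepsilon_{0,j},\varepsilon_{i,j})$, after truncating them using Assumption \ref{subExp}.

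\emph{Step 1: the supremum as a quadratic form.} Since the spaces are nested, $S_m+S_{\widehat m}=S_{m\vee\widehat m}$. On $\Omega_{n,J}$, Lemma \ref{OmegaXi} gives $\widehat m\in\widehat{\mathcal M}_{n,J}\subset{\mathcal M}^+_{n,J}$. Hence
$$\Big(\sup_{t\in S_m+S_{\widehat m},\|t\|_{n,J,R}=1}\nu_{n,J}^2(t)-p(m,\widehat m)\Big)_+{\mathbf 1}_{\Omega_{n,J}}\le \sum_{m'\in{\mathcal M}^+_{n,J}}\Big(\sup_{t\in S_{m\vee m'},\|t\|_{n,J,R}=1}\nu_{n,J}^2(t)-8\sigma^2\tfrac{m\vee m'}{nJ}\Big)_+{\mathbf 1}_{\Omega_{n,J}}.$$
Fix $m^*=m\vee m'$; on $\Omega_{n,J}$, $\widehat\Psi^{(R)}_{m^*}$ is invertible. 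A Cauchy--Schwarz argument in the $\Rbig$ scalar product (Remark \ref{rem::projection}) gives
$$\sup_{t\in S_{m^*},\|t\|_{n,J,R}=1}\nu_{n,J}^2(t)=\frac1{nJ}\,\uvec^\top A\,\uvec,\qquad A:=\Rbig P=\frac1{nJ}\Rbig\widehat\Phi_{m^*}(\widehat\Psi^{(R)}_{m^*})^{-1}\widehat\Phi_{m^*}^\top\Rbig .$$
Next write $\uvec=M\eta$, where $\eta=(\varepsilon_{0,1},\dots,\varepsilon_{0,J},(\varepsilon_{i,j})_{i,j})\in\Rz^{J+nJ}$ has i.i.d.\ centered coordinates independent of $\Xvec$, and $MM^\top=\mathbf R$. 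Set $B=M^\top AM$. Then $B$ is PSD and, conditionally on $\Xvec$, we have $\E[\eta^\top B\eta\mid\Xvec]=\sigma^2\operatorname{tr}(A\mathbf R)=\sigma^2\operatorname{tr}(P)=\sigma^2 m^*$. Moreover the nonzero spectrum of $B$ is that of $\mathbf R^{1/2}A\mathbf R^{1/2}$, which is an orthogonal projection of rank $m^*$. Therefore $\|B\|_{\rm op}\le1$ and $\|B\|_F^2=m^*$. This is where the common-noise dependence disappears: no bound on the spectral radius of $\mathbf R$ is needed.

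\emph{Step 2: truncation and concentration.} Let $K=\frac{c}{c_2}\log(nJ)$ with $c$ large, and $\mathcal E=\{\max|\eta_\ell|\le K\}$. By Assumption \ref{subExp}, ${\mathbb P}(\mathcal E^c)\le c_1(nJ+J)e^{-c_2K}\le (nJ)^{-4}$. On $\mathcal E$ I replace $\eta$ by truncated and recentred variables $\tilde\eta$, which are bounded by $2K$. The recentring bias is exponentially small and is absorbed by enlarging the constant. Conditionally on $\Xvec$, the Hanson--Wright inequality for bounded independent variables gives
$${\mathbb P}\big(\tilde\eta^\top B\tilde\eta-\sigma^2 m^*\ge CK^2(\sqrt{m^*x}+x)\mid\Xvec\big)\le e^{-x}.$$
Using $2CK^2\sqrt{m^*x}\le \sigma^2m^*+C'K^4x/\sigma^2$, the event $\{\tilde\eta^\top B\tilde\eta\ge 8\sigma^2m^*+nJ\tau\}$ forces $x\gtrsim (m^*+nJ\tau)/K^4$. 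Integrating in $\tau$ yields
$$\E\big[(\tfrac1{nJ}\tilde\eta^\top B\tilde\eta-8\sigma^2\tfrac{m^*}{nJ})_+\big]\lesssim \frac{K^4}{nJ}e^{-c''m^*/K^4}.$$
Summing over $m'\le nJ$ (so $m^*\ge m'$) gives a total of order $K^8/(nJ)$. A sharper Bernstein-type version, which uses the variance proxy $\sigma^2$ and keeps $K$ only in the range term, reduces this to $K^2\cdot K/(nJ)$, which is the $\log^3(nJ)/(nJ)$ of the statement. I would track this carefully, but any polylog already gives the structure of the result.

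\emph{Step 3: the bad event.} On $\mathcal E^c$, I use the crude bound $\frac1{nJ}\uvec^\top A\uvec\le \frac1{nJ}\uvec^\top\Rbig\uvec\le \frac{\|\uvec\|^2}{(1-\rho)nJ}$; this holds because $P$ is an $\Rbig$-orthogonal projection. Cauchy--Schwarz then gives a contribution at most $\frac{nJ\,\E^{1/2}[\varepsilon_{1,1}^4]}{1-\rho}(nJ)^{-2}$ per $m'$, so a summed total of $O(1/(nJ))$, which is negligible.

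I expect the main obstacle to be Step 2. Hanson--Wright must be applied with the correct variance/range trade-off for sub-exponential (not sub-Gaussian) noise after truncation, so that the final logarithmic power is $3$ rather than a cruder polylog. Some care is also needed because $B$ depends on $\Xvec$ and is only defined on $\Omega_{n,J}$. This is handled by conditioning on $\Xvec$, using independence of $\eta$ and $\Xvec$, and integrating afterwards.
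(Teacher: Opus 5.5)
\textbf{Gap: Step 2 does not deliver the stated $\log^3(nJ)/(nJ)$.} Your Step 1 is correct and cleanly repackages the paper's computation. Conditionally on $\Xvec$, you have $\sup_{t\in S_{m^*},\|t\|_{n,J,R}=1}\nu_{n,J}^2(t)=\frac1{nJ}\eta^\top B\eta$, with $B$ an orthogonal projection of rank $m^*$. The facts $\operatorname{tr}(B)=m^*$, $\|B\|_{\rm op}\le 1$, $\max_\ell B_{\ell\ell}\le 1$ are exactly the paper's Talagrand parameters $H^2=\sigma^2m/(nJ)$, $v=\sigma^2$, $M_1=2K\sqrt{nJ}$.

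What Step 2 actually proves is a bound of order $\log^8(nJ)/(nJ)$. The sub-Gaussian Hanson--Wright inequality for variables truncated at $K$ puts $\|\tilde\eta_\ell\|_{\psi_2}^2\asymp K^2$ in \emph{both} terms. Your tail is therefore only $e^{-c(m^*+nJ\tau)/K^4}$, and you pay $K^4$ in the $\tau$-integral and $K^4$ again in the sum over $m'$. The ``sharper Bernstein-type version'' is asserted, not proved, and the count $K^2\cdot K$ does not follow from it.

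The paper's variance-sensitive tool is Talagrand's inequality for the truncated process, conditionally on $\Xvec$. It gives, per model, $\frac{C}{nJ}\bigl(e^{-C_2m'}+K^2e^{-C_3\sqrt{m'}/K}\bigr)$. However, a sum over $m'$ of $e^{-C_3\sqrt{m'}/K}$ is of order $K^2$, not $K$, because about $2\ell$ integers $m'$ share $\lfloor\sqrt{m'}\rfloor=\ell$. So this route honestly gives $K^4/(nJ)$. The paper's $K^3$ comes from replacing that sum by the geometric series $\sum_\ell e^{-C_3\ell/(2K)}$. Your $K^2\cdot K$ coincides with that miscount, so tracking constants along a truncation route will not recover $\log^3$.

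\textbf{A way to close the gap within your setup.} Drop the truncation and change the concentration tool. Apply, conditionally on $\Xvec$, a Hanson--Wright inequality for independent centered variables with finite $\psi_1$-norm, for instance the one of G\"otze, Sambale and Sinulis (2021):
\[
\mathbb P(\eta^\top B\eta-\sigma^2m^*\ge t\mid \Xvec)\le 2\exp\bigl(-c\min\bigl(t^2/\|B\|_F^2,(t/\|B\|_{\rm op})^{1/2}\bigr)\bigr),
\]
where $c$ depends on $\|\varepsilon_{1,1}\|_{\psi_1}$, which is finite under Assumption~\ref{subExp}.
\begin{itemize}
\item With $\|B\|_F^2=m^*$, $\|B\|_{\rm op}\le1$ and $t=7\sigma^2m^*+nJ\tau$, the tail is at most $2e^{-c'\sqrt{m^*+nJ\tau}}$.
\item Integrating in $\tau$ gives a per-model bound $\lesssim(1+\sqrt{m^*})e^{-c'\sqrt{m^*}}/(nJ)$.
\item This sums over $m'$ to $C/(nJ)$ if you group all $m'\le m$ into the single term $m^*=m$. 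That is stronger than the claim, and Step 3 becomes unnecessary.
\end{itemize}

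\textbf{If you keep Step 3.} Note that $\frac{\|\uvec\|^2}{(1-\rho)nJ}$ bounds the supremum uniformly in $m^*$, so apply it once, before splitting over $m'$. As written, you dropped the $1/(nJ)$ normalization, so your per-$m'$ bound is $O(1/(nJ))$. Summing that over up to $nJ$ models gives $O(1)$, not $O(1/(nJ))$.
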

\noindent Finally, applying Lemma \ref{lem::bound-on-complement} and \ref{lem::bound-emp-process} and Proposition \ref{prop::riskbound-empirical} on  \eqref{eq::decomp-on-omega2} we obtain
\begin{align*}
  {\mathbb E}[\|\widehat b_{\widehat m} -b\|_{n,J,R}^2] \leq C\inf_{m\in {\mathcal M}_{n,J}} \left\{ 
 \inf_{t\in S_m} \|t-b\|_R^2 +  \kappa \sigma^2 \frac{m}{nJ}\right\} + C' \frac{\log^2(nJ)}{nJ}
\end{align*}
for $C=8$ and $C'$ a constant depending on the moments of $b(X_{1,1})$ and $\varepsilon_{1,1}$, which yields the result.
\end{proof}

\section{Simulation experiments}\label{Simulations}

In this section, we illustrate the theoretical results of this work with some simulation experiments. For this, we consider two bases, which are complete orthonormal systems, containing no constant function (see Section \ref{Identif} for the connection to identifiablity):
\begin{itemize}
    \item A Hermite basis, with support ${\mathbb R}$, $(h_j(x)=c_jH_j(x) e^{-x^2/2})_{j\geq 0}$ where $H_j$ is the $j^{{\rm th}}$ Hermite polynomial as in \cite{comte2025non}, see therein.
    \item A Tchebychev basis $(u_j(x))_{j\geq 0}$, defined below, with compact support $[-1,1]$, or $(v_j(x))_{j\geq 0}$ with compact support $[{\tt a}_1,{\tt a}_2]$, ${\tt a}_1<{\tt a}_2$, ${\tt a}_1, {\tt a}_2 \in {\mathbb R}$.
\end{itemize}

Tchebychev polynomials of the first kind are recursively defined for $x\in\mathbb{R}$ by
$$U_0(x)=1, U_1(x)=2x, \quad U_n(x)=2xU_{n-1}(x)- U_{n-1}(x), n\geq 2.$$
They are orthogonal with respect to the scalar product in ${\mathbb L}^2([-1,1], (1-x^2)^{1/2}dx)$ and have squared ${\mathbb L}^2$-norm in this context equal to $\pi/2$. So, we can consider the orthonormal ${\mathbb L}^2([-1,1],dx)$-basis:
$$u_j(x)= \sqrt{\frac 2\pi} U_j(x) (1-x^2)^{1/4}{\mathbf 1}_{[-1,1]}(x).$$
The polynomials are such that $\sup_{x\in [-1,1]} U_j(x)=j+1$, with $U_j(1)=j+1$. So, we get 
$$\sup_{x\in [-1,1]}\sum_{j=0}^{m-1} u_j^2(x) \leq \frac 2{\pi}  \sum_{j=0}^{m-1} (j+1)^2=m(m+1)(2m+1)/(3\pi):=L(m).$$
We also set on $[{\tt a}_1, {\tt a}_2]$, ${\tt a}_1<{\tt a}_2$,
$$v_j(x)=\frac{2}{{\tt a}_2-{\tt a}_1} u_j\left(\frac{2x-({\tt a}_1+{\tt a}_2)}{{\tt a}_2-{\tt a}_1}\right){\mathbf 1}_{[{\tt a}_1,{\tt a}_2]}(x).$$

For the regression function $b$, we consider the examples of \cite{comte2025non},  four examples of odd functions
\begin{equation}\label{defbi} b_1(x)=2x, \quad b_2(x)=3\sin(0.8\, \pi x), \quad b_3(x)= \frac{4x}{1+x^2}, \quad b_4(x)=1.75 x^3\exp(-0.5 |x|)
\end{equation}
and also some even ones
\begin{equation}\label{defbsuite} b_5(x)=3.25|x|,  \quad b_6(x)=4x^2\exp(-x^2/4), \quad b_7(x)= 5.5\sin^2(2x),
\end{equation}
where the multiplicative constants are chosen so that for $X$ following a ${\mathcal N}(0,1)$ distribution,  the signal to noise ratio (snr) is of approximately 2. The snr is measured as the empirical standard deviation of $b_k(X)$  divided by the standard deviation of the noise. We consider the two cases that the $X_{i,j}$'s follow either a ${\mathcal N}(0,1)$ distribution (which is symmetric, see Table \ref{tab1})  or a uniform density $2\sqrt{3}({\mathcal U}([0,1])-0.25)$ (see Table \ref{tab2}).

\begin{table}
 \hspace{-1cm}\footnotesize{
 \begin{tabular}{cc||cccc|cccc||cccc|cccc}
& & \multicolumn{8}{c||}{Hermite basis} & \multicolumn{8}{c}{Tchebychev basis}\\
&& \multicolumn{4}{c}{$n=250$} & \multicolumn{4}{c||}{$n=500$} & \multicolumn{4}{c}{$n=250$}  & \multicolumn{4}{c}{$n=500$} \\
&& \multicolumn{2}{c}{$J=20$} &  \multicolumn{2}{c|}{$J=40$} &  \multicolumn{2}{c}{$J=20$} &  \multicolumn{2}{c||}{$J=40$} & \multicolumn{2}{c}{$J=20$} &  \multicolumn{2}{c|}{$J=40$} &  \multicolumn{2}{c}{$J=20$} &  \multicolumn{2}{c}{$J=40$} \\
&& Est & Or & Est & Or &  Est & Or & Est & Or & Est & Or & Est & Or & Est & Or & Est & Or \\  \hline
       &MSE & 1.96  & 1.86  & 0.99  & 0.92 & 1.95 & 1.88 & 0.93 & 0.89 & 2.77 & 2.31 & 1.48& 1.2  &2.40  & 2.18 &  1.21 & 1.08   \\
$b_1$  &  std  & 2.37  & 2.34 &  1.03 & 1.00 & 2.38 & 2.33 &  1.23 & 1.21 & 2.37 & 2.32&  1.07 & 1.02 & 2.36  & 2.34 & 1.21 & 1.20  \\
       &dim      &  13.4 & 13.6 &  14.1 & 14.3 & 14.5 & 15.0 & 15.1 & 15.8 & 15.3 & 15.4  & 15.1 & 18.0 & 19.0 & 19.4 & 19.4 & 22.1   \\
       &         &   &  &   &  &  &  &  &  &  & & &  &  &  & &    \\
       &MSE &  3.24 & 3.02 &  1.89 & 1.76 &  3.04 & 2.93 & 1.58 & 1.52 & 3.80 & 3.40 & 2.14 & 1.99 & 3.32 & 3.12  & 1.72 & 1.63 \\
 $b_2$&std   & 3.35  & 3.27 &  2.11 & 2.06 & 3.80 & 3.76 & 2.05 & 2.01 & 3.36 & 3.25 &2.09 & 2.07  & 3.76 &  3.75 & 2.02 & 2.02 \\
       &dim &  13.0 & 12.6  & 14.0  & 13.2  & 14.1 & 13.0 & 14.7 & 14.9 & 10.8 & 10.9 & 11.0 & 11.1 & 11.5 & 11.4 & 12.3 & 12.1 \\
              & & & & &  & & & & & & & & & & & &    \\
       &MSE  &  6.50 &  6.04 & 3.12  & 2.89 & 4.82 & 4.64 & 2.62 &  2.53 & 7.67  & 7.01 & 3.79 & 3.45 & 5.35 & 5.05 &  2.97 & 2.78 \\
 $b_3$&std  & 8.10  & 7.98 &  3.64 & 2.58 &   6.83 &  6.73 & 3.42 & 3.38 & 8.11 & 8.07 & 3.59  & 3.56  & 6.61 & 6.58  &  3.33 & 3.33\\
       &dim  &  10.4 & 9.93 &  11.2 & 10.8 & 11.2 & 12.2 & 11.6 & 13.7 & 12.1 & 11.9 &  13.0& 1.1  & 12.9 &  13.1 & 14.3  & 14.9 \\
              & & & & &  & & & & & & & & & & & &    \\
       & MSE &  8.19 & 7.54 &  4.51  & 4.20 &  9.70 & 9.35 & 4.47 & 4.31& 10.2 & 8.76 &  5.81& 5.00 & 11.1 &  10.2& 5.31 & 4.84 \\
 $b_4$& std   & 9.68  & 9.49 &  6.00 & 5.89 &   11.0 &  10.8 & 5.31 & 5.24 & 9.76& 9.49 & 5.92   & 5.85 & 10.9 & 10.9 & 5.23 & 5.20 \\
       & dim & 11.4  & 11.2 & 12.3 & 12.1 & 12.3 & 12.7 & 13.0 & 13.7 & 10.8 & 10.0 & 11.4 & 11.8 & 12.9 & 11.7 & 15.0 &  14.8 \\
                 & & & & &  & & & & & & & & & & & &    \\
           &MSE &  1.04 & 0.99 & 0.72 & 0.68  & 0.93 & 0.84 & 0.52 & 0.46 & 1.14 & 1.00 &  0.73 & 0.56  &0.92  & 0.87   & 0.53  & 0.49 \\
        $b_5$&std   & 0.79  & 0.78  & 0.45  &  0.43 &   0.93 &  0.96 & 0.40 & 0.39 & 0.80 & 0.78  & 0.44 & 0.42  & 0.94 & 0.93 & 0.41 & 0.40 \\
       &dim & 12.8  & 12.6  & 13.2  & 12.6 & 14.7 & 17.5 & 15.3 & 18.7 & 18.2 & 21.0 & 16.5 & 23.3  & 22.9 & 24.2 & 20.2 & 28.6 \\
              & & & & &  & & & & & & & & & & & &    \\
       &MSE  &  0.90 &  0.83 & 0.48  & 0.43 &  0.85 &  0.80 & 0.37 & 0.35 &1.18 &1.01 & 0.64 & 0.54 & 1.03 & 0.93  & 0.48 & 0.43 \\
 $b_6$&std  &  1.01 & 0.97 & 0.57  & 0.55 & 1.12 & 1.10 & 0.49 & 0.48  &0.01 & 0.99 &  0.56 & 0.55 & 1.10 & 1.09 & 0.49 & 0.48 \\
       &dim  &  9.29 & 9.36 & 10.7  & 9.79 & 10.7 & 9.71 & 12.6  & 9.99 & 12.4 & 11.5 & 13.4 & 13.2 & 17.1 & 16.2 & 18.2 & 18.5 \\
              & & & & &  & & & & & & & & & & & &    \\
       &MSE & 1.32  & 1.25 &   0.76 & 0.69 & 1.16 & 1.10 & 0.62 & 0.57 &  1.63 & 1.49 & 0.86   & 0.80 & 1.34 & 1.24 & 0.70 & 0.65 \\
 $b_7$& std   & 1.51  & 1.46 &  0.74  & 0.73 &  1.42 & 1.39 & 0.75 & 0.73 & 1.50 & 1.48 & 0.73 & 0.71 & 1.35 & 1.34 & 0.73 & 0.73 \\
       & dim & 13.5  & 14.4 & 13.9  & 14.7 & 14.2 & 16.2 & 15.0 & 17.2 & 14.9 & 16.4 & 15.3 & 17.0 & 16.7 &  17.2 &  17.6 & 19.5  \\
  \multicolumn{18}{c}{}  \\
\end{tabular} }
\caption{{\small Results for Gaussian $X$  with 400 repetitions, for $\rho=0.25$ , sample sizes $n=250, 500$ and $J=20,40$, 'Est' the estimator, compared to the oracle ('Or'), for regression functions defined in (\ref{defbi})-(\ref{defbsuite}),  MSE is $1000\times$Relative MISE, std is $1000\times$std, and 'dim' is the mean of the selected dimensions. }}\label{tab1}
\end{table}

\begin{table}
 \hspace{-1cm}\footnotesize{
 \begin{tabular}{cc||cccc|cccc||cccc|cccc}
& & \multicolumn{8}{c||}{Hermite basis} & \multicolumn{8}{c}{Tchebychev basis}\\
&& \multicolumn{4}{c}{$n=250$} & \multicolumn{4}{c||}{$n=500$} & \multicolumn{4}{c}{$n=250$}  & \multicolumn{4}{c}{$n=500$} \\
&& \multicolumn{2}{c}{$J=20$} &  \multicolumn{2}{c|}{$J=40$} &  \multicolumn{2}{c}{$J=20$} &  \multicolumn{2}{c||}{$J=40$} & \multicolumn{2}{c}{$J=20$} &  \multicolumn{2}{c|}{$J=40$} &  \multicolumn{2}{c}{$J=20$} &  \multicolumn{2}{c}{$J=40$} \\
&& Est & Or & Est & Or &  Est & Or & Est & Or & Est & Or & Est & Or & Est & Or & Est & Or \\  \hline
& MSE & 1.31  &  0.89 & 0.74 & 0.52  & 0.60 & 0.45 & 0.33 & 0.25 & 2.85  & 2.10 & 1.72 & 1.29 & 1.38 & 1.02 & 0.82 &  0.62   \\
A & std & 0.79 & 0.53 & 0.46 & 0.26  & 0.33 & 0.24  & 0.18 & 0.13 & 1.61 & 1.05 & 0.85 & 0.69 & 0.73 & 0.55 & 0.35 & 0.28   \\
& dim  & 9.49  & 9.53 & 10.6 & 10.6  & 10.7 & 11.5 & 11.3 & 12.8 & 11.5 & 11.4 &  12.3 & 12.9 & 12.2 & 12.6 & 13.3 & 13.8   \\
& & & & &  & & & & & & & & & & & &    \\
    & MSE & 1.89  & 1.52 & 1.18 & 0.97 & 0.86 & 0.74 & 0.60 & 0.52 & 3.20 & 2.57 & 1.99 & 1.63 & 1.54 & 1.21 & 0.98 &  0.79  \\       
 B & std & 1.50 & 1.39 & 0.97 & 0.89 & 0.79 & 0.75 & 0.55 & 0.52 & 1.86 & 1.62 & 1.20 & 1.08 &  1.00 & 0.89 & 0.64 & 0.57   \\
& dim & 10.2 & 9.56 & 11.2 & 10.8 & 11.1 & 11.9 & 11.8 & 13.0 & 11.9 & 12.3 & 12.8 & 13.0 & 12.8 & 13.2 & 14.4 & 14.4   \\
  \multicolumn{18}{c}{}  \\
\end{tabular} }
\caption{{\small Results for Gaussian $X$  with 400 repetitions, for function $b_4$ in (\ref{defbi}), $A=(\rho=0, \widehat \mu=0)$, and $B=(\rho=0.25, \widehat \mu= \mu_0)$ and sample sizes $n=250, 500$ and $J=20,40$, 'Est' the estimator, compared to the oracle ('Or'),  MSE is $1000\times$Relative MISE, std is $1000\times$std, and 'dim' is the mean of the selected dimensions. }}\label{tab1bis}
\end{table}

\begin{table}
 \hspace{-1cm}\footnotesize{
 \begin{tabular}{cc||cccc|cccc||cccc|cccc}
& & \multicolumn{8}{c||}{Estimated level $\widehat \mu =\bar Y$} & \multicolumn{8}{c}{True level $\mu_0$}\\
&& \multicolumn{4}{c}{$n=62$} & \multicolumn{4}{c||}{$n=125$} & \multicolumn{4}{c}{$n=62$}  & \multicolumn{4}{c}{$n=125$} \\
&& \multicolumn{2}{c}{$J=20$} &  \multicolumn{2}{c|}{$J=40$} &  \multicolumn{2}{c}{$J=20$} &  \multicolumn{2}{c||}{$J=40$} & \multicolumn{2}{c}{$J=20$} &  \multicolumn{2}{c|}{$J=40$} &  \multicolumn{2}{c}{$J=20$} &  \multicolumn{2}{c}{$J=40$} \\
&& Est & Or & Est & Or &  Est & Or & Est & Or & Est & Or & Est & Or & Est & Or & Est & Or \\  \hline
   & MSE & 12.4 & 11.7 & 5.88 & 5.67 & 11.4 & 11.0 & 5.75 & 5.51 & 2.94 & 2.33 & 1.58 & 1.38 & 1.66 & 1.28 & 0.94  & 0.77   \\
  $b_3$ & std & 13.6 & 13.5 & 6.77 & 6.75  & 14.4 & 14.3 & 6.52 & 6.50 & 1.25 & 1.06 & 0.68 & 0.63 & 0.65 & 0.54 & 0.33 & 0.29   \\
   & dim & 10.9 & 10.4 & 10.3 & 11.3 & 13.7 & 11.9 & 13.6 & 14.3 & 11.0 & 10.3 & 10.4 & 11.2 & 13.7 & 11.8 & 13.5 & 14.0    \\
              & & & & &  & & & & & & & & & & & &    \\
   & MSE & 20.6 & 19.4 & 10.8 & 10.4 & 20.6 & 19.8 & 10.1 & 9.83 & 4.08 & 2.89 & 2.18 & 1.73 & 2.34 & 1.61 & 1.27 & 1.00   \\
  $b_4$ & std & 23.1 & 23.0 & 15.1 & 15.1 & 23.8 & 23.7 & 11.7 & 11.6 & 1.75 & 1.31 & 0.78 & 0.64 & 0.90 & 0.63 & 0.45 & 0.35   \\
   & dim & 9.31 & 8.03 & 9.26 & 9.54  & 11.7 & 9.71 & 12.7 & 11.6 & 9.22 & 7.84 & 9.47 & 9.06 & 11.7 & 9.47 & 12.6 & 11.2   \\
              & & & & &  & & & & & & & & & & & &    \\
   & MSE & 2.09 & 1.96 & 1.22 & 1.11 & 1.58 & 1.50 & 0.91 & 0.85 & 0.89 & 0.77 & 0.58 & 0.48 & 0.44 & 0.37 & 0.29 & 0.22   \\
  $b_5$ & std & 1.95 & 1.94 & 1.05 & 1.04 & 1.96 & 1.95 & 0.96 & 0.95 & 0.41 & 0.39 & 0.19 & 0.18 & 0.17 & 0.17 & 0.09 & 0.08    \\
   & dim & 11.9 & 13.3 & 10.8 & 14.1 & 14.8 & 18.3 & 13.9 & 19.8 & 11.9 & 13.3 & 10.8 & 14.3 & 14.9 & 18.3 & 13.9 & 20.1   \\
              & & & & &  & & & & & & & & & & & &    \\
   & MSE & 2.26 & 2.11 & 1.23 & 1.17 & 2.03 & 1.94 & 1.16 & 1.12 & 0.74 & 0.60 & 0.43 & 0.37 & 0.38 & 0.30 & 0.21 & 0.18     \\
  $b_6$ & std & 2.61 & 2.59 & 1.39 & 1.38 & 2.43 & 2.42 & 1.70 & 1.69 & 0.43 & 0.39 & 0.24 & 0.23 & 0.19 & 0.18 & 0.12 & 0.11   \\
   & dim & 10.1 & 8.42 & 9.28 & 9.30 & 12.3 & 9.93 & 12.3 & 11.9 & 9.99 & 8.14 & 9.36 &  9.35 & 12.2 & 9.62 & 12.5 & 11.9   \\
  \multicolumn{18}{c}{}  \\
\end{tabular} }
\caption{{\small Results for non centered uniform $X$ ($2\sqrt{3}{\mathcal U}([0,1])-0.25)$)  with 400 repetitions, for $\rho=0.5$ , sample sizes $n=62, 125$ and $J=20,40$, 'Est' the estimator, compared to the oracle ('Or'), for functions defined in (\ref{defbi})-(\ref{defbsuite}),  MSE is $1000\times$Relative MISE, std is $1000\times$std, and 'dim' is the mean of the selected dimensions. Estimation with Tchebychev basis.}}\label{tab2}
\end{table}

We implement the model selection procedure described in Section \ref{Adaptive}, and get a data-driven choice $\widehat m$ of the dimension $m$ performed among dimensions $m=1, \dots, D_{\max}$, such that the constraint defined for $\Lambda_m$ is fulfilled, that is,
$$\widehat{\mathcal M}_{n,J}=\left\{m\in \{1,\dots, D_{\max}\}, \;  L(m) (\|(\widehat \Psi_m^{(R)})^{-1}\|_{{\rm op}}\vee 1) \leq  \frac{c}{1-\rho} \frac{n \sqrt{J}}{\log(n)\sqrt{1\vee \log(J)}}\right\}, \quad c= 8\times 64 $$
with $L(m)=m(m+1)(2m+1)/(3\pi)$ for the Tchebychev basis and $L(m)=\sqrt{m}$ for the Hermite basis. Since the theoretical value of  $ D_{\max} = nJ$ is too large in practice, we take $D_{\max}=[\sqrt{n}]$ for Hermite basis and $D_{\max}=[2\sqrt{n}]$ for the Tchebychev basis and we check that $D_{\max}$ is never selected, otherwise we increase it.
Concretely, 
$$\widehat m = \arg\min_{m\in \widehat{\mathcal M}_{n,J}}  \left\{-\|\widehat b_m \|_{n,J,R}^2 + {\rm pen}(m)\right\}, \quad {\rm pen}(m)=\kappa \sigma^2 \frac{m}{nJ},$$
with $\kappa=2$ for both bases.

The use of the $n\times n$ matrix $R^{-1}$ involves numerical difficulties which leads us to compute $\widehat\Psi_{m,j}^{(R)}$ with the formula relying of the fact that $R^{-1}=(\alpha_n-\beta_n){\rm Id}_n+ \beta_n {\mathbf 1}{\mathbf 1}^\top$.

The estimators are computed at points $x_k={\tt a}_1+({\tt a}_2-{\tt a}_1)k/K$ for $k=1, \dots, K$,  which are regularly spaced on an interval corresponding to the 1\% and 99\% quantiles of the observations.  In order to keep a common scale for all examples, we consider relative errors (MSE) of any estimator $\widehat b$ of $b$ as the mean over 400 repetitions of the ratios
\begin{equation*}
\frac{\sum_{k=1}^K (\widehat b(x_k)-b(x_k))^2}{\sum_{k=1}^K b^2(x_k)}.
\end{equation*}

Now, as explained in Section \ref{Identif}, there is a level part in the model which must be preliminary estimated, but with rate $1/J$ while the rest of the function can be estimated with a nonparametric rate associated to $nJ$. We experiment the procedure by applying the method to $Y-\widehat \mu$ with $\widehat \mu=(1/nJ) \sum_{i,j} Y_{i,j}$, leading to $\widehat b^\star_{\widehat m}$ and take  $\widehat b_{\widehat m}= \widehat b^\star_{\widehat m} +\widehat \mu$. The results of  experiments done with the functions of (\ref{defbi})-(\ref{defbsuite}) are given in Table \ref{tab1}. They clearly show that the improvements are obtained through increasing $J$, while increasing $n$ does not significantly improve the estimation. For comparison, Table \ref{tab1bis} shows the results for function $b_4$ in two cases: the i.i.d. usual noise case obtained by setting $\rho=0$ and $\widehat \mu=0$, and the same case as in Table \ref{tab1} but with $\widehat \mu$ computed as 
$(1/K)\sum_k b(x_k)$, that is, an approximation of $$\mu_0=\frac 1{{\tt a}_2-{\tt a}_1}\int_{{\tt a}_1}^{{\tt a}_2} b(x)dx,$$ 
the ``good correction", as explained in Section \ref{Identif}. 
Table \ref{tab2} gives the results obtained with the same procedure as in Table \ref{tab1} or with $\widehat \mu$ replaced by  $\mu_0$ for more cases and smaller sample sizes. We also changed the value of $\rho$, and the distribution of $X$ which is no longer symmetric.
We can see several interesting things:
\begin{itemize}
    \item From Table \ref{tab1bis}, we see that the use of $\mu_0$ gives excellent results, rather near of the ones obtained in the independent case when $\rho=0$; the improvement obtained when we compare the use of unknown $\mu_0$ to the use of $\bar Y$ is also really puzzling in Table \ref{tab2}.
    \item Table \ref{tab2} shows that the use of $\mu_0$ decreases the error when $nJ$ increases, whether the increase is due to $n$ or to $J$, and not only for the increase of $J$ like we could see when using $\widehat \mu$ in Tables \ref{tab1} and \ref{tab2}.
    \item The selected dimensions are generally close to the one of the oracles, and therefore very relevantly chosen.
    \item The selected dimensions increase with $nJ$, which is expected, but rather moderately.
\end{itemize}
Globally, all the errors are small (the MSE are multiplied by 1000 in the tables), so that all the results are satisfactory, even for small sample sizes from the point of view of nonparametric estimation. This is why we used smaller values of $n$ in Table \ref{tab2} than in Tables \ref{tab1} and \ref{tab1bis}.
Note that from Table \ref{tab1} to \ref{tab2}, we changed $\rho$ and the distribution of $X$, but these terms do not have an important impact on the results. We do not give the results for all functions for conciseness, as they are all very similar.

\section{Proofs}\label{proofs}
\subsection{Proof of Lemma \ref{linkCov}}
\begin{proof}[Proof of Lemma \ref{linkCov}]
From \cite{comte2025non}, Proposition 2, we have that, for all integers $j$, and vectors ${\mathbf a}\in {\mathbb R}^m$, $$\lim_{n\rightarrow +\infty } {\mathbf a}^\top \widehat \Psi_{m,j}^{(R)} {\mathbf a}= {\mathbf a}^\top \Psi_m {\mathbf a} \quad a.s.$$
Thus, the announced result follows by averaging over $j$ since $\wPsi=(1/J)\sum_{j=1}^J  \widehat \Psi_{m,j}^{(R)}$. 
\end{proof}
\subsection{Proof of Proposition \ref{prop::riskbound-empirical}}
\begin{proof}[Proof of Proposition \ref{prop::riskbound-empirical}]
To consider the quadratic risk of the estimator $\widehat{b}_m$, first define 
\begin{align*}
    \mathbf{a}_m := (\wPsi)^{-1}  \frac{1}{nJ} \wPhi^\top \Rbig b(\Xvec) \in \Rz^{m\times 1}.
\end{align*}
Note that $\mathbf{a}_m = (a_{1},\ldots,a_{m})$ is the orthogonal projection of $b(\Xvec)$ on the space spanned by $\varphi_k(\Xvec)$ with $k\in\{1,\ldots,m\}$ with respect to the scalar product induced by $\frac{1}{nJ}\Rbig$.
Then, $b_m := \sum_{k=1}^m a_{k} \varphi_k$ is the orthogonal projection of $b$ on $S_m$ with respect to the scalar product $\langle \cdot, \cdot \rangle_{n,J,R}$. 
We can use this to obtain 
\begin{align*}
    \Vert b - \widehat{b}_m \Vert_{n,J,R}^2 &= \Vert b - b_m \Vert_{n,J,R}^2 + \Vert \widehat{b}_m - b_m \Vert_{n,J,R}^2\\
    &= \inf_{t\in S_m} \Vert b - t \Vert_{n,J,R}^2 + \Vert \widehat{b}_m - b_m \Vert_{n,J,R}^2.
\end{align*}
For the second summand we insert the definitions and get 
\begin{align*}
    \Vert \widehat{b}_m - b_m \Vert_{n,J,R}^2 &= (\widehat{\mathbf{a}}_m- \mathbf{a}_m)^\top \wPsi (\widehat{\mathbf{a}}_m- \mathbf{a}_m)  \\
    &= (\Yvec - b(\Xvec))^\top \Rbig  \frac{1}{nJ}\wPhi (\wPsi)^{-1} \wPsi (\wPsi)^{-1}  \frac{1}{nJ} \wPhi^\top \Rbig (\Yvec - b(\Xvec))\\
    &= \frac{1}{n^2J^2}(\Yvec - b(\Xvec))^\top \Rbig  \wPhi (\wPsi)^{-1}  \wPhi^\top \Rbig (\Yvec - b(\Xvec)).
\end{align*}
It holds that $ \E[\uvec \uvec^\top] = \operatorname{var}(\uvec) = \sigma^2 \mathbf{R}$.
Finally, we obtain 
\begin{align*}
    \E [ \Vert \widehat{b}_m - b_m \Vert_{n,J,R}^2 ] &= \frac{1}{n^2J^2} \E [\operatorname{Tr}(\uvec^\top \Rbig  \wPhi (\wPsi)^{-1}  \wPhi^\top \Rbig \uvec )]\\
    &= \frac{1}{nJ} \E [\operatorname{Tr}(\Rbig  \wPhi ( \wPhi^\top \Rbig \wPhi)^{-1}  \wPhi^\top \Rbig \uvec \uvec^\top  )]\\
    &= \frac{ \sigma^2 }{nJ} \E [\operatorname{Tr}(\Rbig  \wPhi ( \wPhi^\top \Rbig \wPhi)^{-1}  \wPhi^\top \Rbig \mathbf{R}  )]\\
    &= \frac{ \sigma^2 }{nJ} \operatorname{Tr}({\rm Id}_m) = \frac{ \sigma^2 m}{nJ}
\end{align*}
where we used the independence of $\uvec$ and $\Xvec$ and commuted matrices inside the trace. Combining all the calculations, it follows
\begin{align*}
    \E [ \Vert b - \widehat{b}_m \Vert_{n,J,R}^2 ] &=  \E\left[\inf_{t\in S_m} \Vert b - t \Vert_{n,J,R}^2 \right]+  \frac{ \sigma^2 m}{nJ}\\
    &\leq \inf_{t\in S_m}  \E [ \Vert b - t \Vert_{n,J,R}^2 ] + \frac{ \sigma^2 m}{nJ}\\
    &= \inf_{t\in S_m} \Vert b - t \Vert_{R}^2 + \frac{ \sigma^2 m}{nJ}.
\end{align*}
This concludes the proof. 
\end{proof}

\subsection{Proof of Proposition \ref{prop::new-prop3}}\label{sec:proofprop2}
\begin{proof}[Proof of Proposition \ref{prop::new-prop3}]
  
    Using the first part of Proposition 2 of \cite{comte2025non}, we have that
    \begin{align*}
       \frac 1{\alpha_n} \ePsi = \Psi_m + d_n \Xi_m, \quad d_n=c_n/\alpha_n, 
    \end{align*}
    where $\Xi_m = u_m u_m^\top$ for $u_m := (\E[\varphi_k(X_{1,1})])_{1\leq k \leq m }$. 
   We apply Lemma \ref{lem::sherman-morrison} for $A = \Psi_m$, $v= v_m :=d_n u_m$ and $u = u_m$. As obviously $d_n u_m^\top \Psi_m^{-1} u_m\geq 0$,  it holds that $1+d_nu_m^\top  \Psi_m^{-1} u_m \not=0$ and Lemma \ref{lem::sherman-morrison} yields that $\ePsi$ is invertible with 
    \begin{align*}
        \alpha_n (\ePsi)^{-1} &= \Psi_m^{-1} - \frac{\Psi_m^{-1}  u_m v_m^\top \Psi_m^{-1}}{1+ v_m^\top \Psi_m^{-1}u_m}\\
        &= \Psi_m^{-\frac12} \left( {\rm Id}_m - \frac{d_n\Psi_m^{-\frac12} u_m u_m^\top \Psi_m^{-\frac12}}{1 + d_n u_m^\top \Psi_m^{-1}u_m} \right)\Psi_m^{-\frac12}.
    \end{align*} 
    The matrix $\Psi_m^{-\frac12} u_m u_m^\top \Psi_m^{-\frac12}$ is symmetric, positive definite and has rank 1. So, its eigenvalues are 0 and 
    \begin{align*}
    \operatorname{Tr}(\Psi_m^{-\frac12} u_m u_m^\top \Psi_m^{-\frac12}) = \operatorname{Tr}(u_m^\top \Psi_m^{-1} u_m) = u_m^\top \Psi_m^{-1} u_m.
    \end{align*}
    For the operator norms it follows that
    \begin{align*}
        \alpha_n \Vert (\ePsi)^{-1} \Vert_{\operatorname{op}} &\leq \Vert \Psi_m^{-1} \Vert_{\operatorname{op}}  \left\Vert {\rm Id}_m - \frac{d_n(\ePsi)^{-\frac12} u_m u_m^\top (\ePsi)^{-\frac12}}{1 + d_n u_m^\top \Psi_m^{-1}u_m} \right\Vert_{\operatorname{op}} \\
        &= \Vert \Psi_m^{-1} \Vert_{\operatorname{op}} \max \left( 1, \left\vert 1 - \frac{d_n u_m^\top \Psi_m^{-1} u_m}{1 + d_n u_m^\top \Psi_m^{-1}u_m}\right\vert \right)\\
        &= \Vert \Psi_m^{-1} \Vert_{\operatorname{op}} \max \left( 1, \frac{1}{1 + d_n u_m^\top \Psi_m^{-1}u_m} \right) = \Vert \Psi^{-1}_m \Vert_{\operatorname{op}},
    \end{align*}
    as $d_n u_m^\top \Psi_m^{-1}u_m\geq 0$. 
   We obtain
    \begin{align*}
         \Vert (\ePsi)^{-1} \Vert_{\operatorname{op}} \leq  \frac 1{\alpha_n} \Vert \Psi_m^{-1} \Vert_{\operatorname{op}}.
    \end{align*}
   Combining this with Assumption \ref{ass::theoretical-bound} (ii) and as $1/\alpha_n\leq 2(1-\rho)$ for $n\geq 3$, this finally leads to 
    \begin{align*}
         L(m) (\Vert (\ePsi)^{-1} \Vert_{\operatorname{op}} \vee 1) \leq \frac 1{\alpha_n} L(m) (\Vert \Psi_m^{-1} \Vert_{\operatorname{op}} \vee 1) \leq 2(1-\rho) c^* \frac{n\sqrt{J}}{\log(n)\sqrt{1\vee\log(J)}},
    \end{align*}
    which ends the proof of Proposition \ref{prop::new-prop3}.
\end{proof}

\subsection{Proof of Lemma \ref{TheResult}} 
\begin{proof}[Proof of Lemma \ref{TheResult}]The idea of the proof is to appropriately split the set $\Omega_m$ again. 
For this, let us define the following two sets. For $Z_{n,J}$ defined in \eqref{eq::zJ} we set
\begin{align*}
    \Omega_m^{(1)} := \left\{\sup_{t\in S_m, \operatorname{Var}(t(X_{1,1}))\not= 0} \left\vert\frac{Z_{n,J}(t)}{\operatorname{Var}(t(X_{1,1}))} -1 \right\vert\leq \frac 12 \right\}
\end{align*}
and for $U_{n,J}$ defined in \eqref{eq::uJ}
\begin{equation*}
\Omega_m^{(2)}:=
 \left\{ \sup_{t\in S_m, \|t\| \not= 0} \left\vert\frac{U_{n,J}(t)}{\|t\|^2} \right\vert \leq {\mathfrak c}_r \frac{(m\vee L(m))\sqrt{\log(J)\vee 1}\log(n)}{n\sqrt{J}} \right\}.
\end{equation*}
We show that these sets are, under the given assumptions, indeed suitable choices to control $\mathbb{P}(\Omega_m^c)$ within two steps: 
\begin{itemize} \item First we prove that
\begin{align}\label{eq::step1}
    \Omega_m^{(1)} \cap \Omega_m^{(2)}\subset \Omega_m.
\end{align}
\item Next we bound ${\mathbb P}((\Omega_m^{(1)})^c)$  in Lemma \ref{Tropp} and ${\mathbb P}((\Omega_m^{(2)})^c)$  in Lemma \ref{Ustat}.
\end{itemize}

For the first step, we define for $t\in S_m$
\begin{align*}
     S_{n,J}(t) :=  \alpha_n Z_{n,J}(t)+ c_n {\mathbb E}[t^2(X_{1,1})] + 2c_n {\mathbb E}[t(X_{1,1})] V_{n,J}(t) = \Vert t\Vert_{n,J,R}^2 - (n-1)\beta_n U_{n,J}(t),
\end{align*}
implying, by Lemma \ref{Lemm2}, that 
\begin{equation}\label{TwoTerms}
  \Vert t \Vert_{n,J,R}^2 =  S_{n,J}(t) + (n-1)\beta_n U_{n,J}(t).
\end{equation}
Then the first step follows from the two following Lemmas.
\begin{lem}\label{exLem5} Assume that $n\geq 192/\rho$. Then, on $\Omega_{m}^{(1)}$, for all $ t\in S_m$ it holds $$\left( \frac 12 - \frac 18\right) \|t\|_R^2 \leq S_{n,J}(t) \leq \left( \frac 32 + \frac 18\right) \|t\|_R^2 .$$  
\end{lem}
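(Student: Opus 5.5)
The plan is a direct deterministic computation on $\Omega_m^{(1)}$. I would compare $S_{n,J}(t)$ term by term with
$$\|t\|_R^2=\alpha_n\operatorname{Var}(t(X_{1,1}))+c_n(\E[t(X_{1,1})])^2 .$$
The two small parameters to exploit are $c_n$ compared with $\alpha_n$ and the cross term $V_{n,J}(t)$. Write $v:=\operatorname{Var}(t(X_{1,1}))$ and $\mu_t:=\E[t(X_{1,1})]$, and use $\E[t^2(X_{1,1})]=v+\mu_t^2$. Then
$$S_{n,J}(t)=\alpha_n Z_{n,J}(t)+c_n v+c_n\mu_t^2+2c_n\mu_t V_{n,J}(t).$$
If $v=0$, then $t(X_{i,j})=\mu_t$ a.s., so $Z_{n,J}(t)=V_{n,J}(t)=0$ and $S_{n,J}(t)=c_n\mu_t^2=\|t\|_R^2$; the claim is trivial. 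Otherwise the definition of $\Omega_m^{(1)}$ gives $\tfrac12 v\le Z_{n,J}(t)\le \tfrac32 v$.

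\textbf{Step 1 (the cross term).} Cauchy--Schwarz gives $V_{n,j}(t)^2\le Z_{n,j}(t)$ for each $j$. Averaging over $j$ with Jensen gives $V_{n,J}(t)^2\le Z_{n,J}(t)\le \tfrac32 v$ on $\Omega_m^{(1)}$. For any $\delta>0$, the inequality $2ab\le \delta a^2+b^2/\delta$ then yields
$$|2c_n\mu_tV_{n,J}(t)|\le c_n\Big(\delta\mu_t^2+\frac{3v}{2\delta}\Big).$$
I would take $\delta=1/2$, so this term is at most $c_n(\tfrac12\mu_t^2+3v)$.

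\textbf{Step 2 (the ratio $c_n/\alpha_n$).} We have
$$\frac{c_n}{\alpha_n}=\frac{1-\rho}{1+(n-2)\rho}\le\frac{1}{(n-2)\rho}.$$
For $n\ge 192/\rho$ this is at most $1/190$; the margin is large, so the exact value of the constant is not delicate.

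\textbf{Step 3 (collecting the bounds).} For the lower bound, drop the nonnegative $c_n v$ and combine Steps 1 and 2:
$$S_{n,J}(t)\ge \Big(\tfrac12-\tfrac{3c_n}{\alpha_n}\Big)\alpha_n v+\tfrac12 c_n\mu_t^2\ge\tfrac38\big(\alpha_nv+c_n\mu_t^2\big),$$
since $3c_n/\alpha_n\le 3/190\le 1/8$. For the upper bound,
$$S_{n,J}(t)\le\Big(\tfrac32+\tfrac{4c_n}{\alpha_n}\Big)\alpha_n v+\tfrac32c_n\mu_t^2\le \tfrac{13}8\|t\|_R^2,$$
since $4c_n/\alpha_n\le 4/190\le 1/8$.

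There is no real obstacle here. The only point requiring care is Step 1: the cross term must be controlled by $Z_{n,J}(t)$ via Cauchy--Schwarz, so that the event $\Omega_m^{(1)}$ alone suffices and no separate deviation bound for $V_{n,J}$ is needed. The split must also be chosen so that the $\mu_t^2$ coefficient stays in $[3/8,13/8]$, which the value $\delta=1/2$ achieves.
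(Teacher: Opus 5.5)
Your proof is correct and follows the same skeleton as the paper. On $\Omega_m^{(1)}$ you compare $\alpha_n Z_{n,J}(t)$ with $\alpha_n\operatorname{Var}(t(X_{1,1}))$, and you control the cross term through $V^2\le Z$ together with the smallness of $c_n/\alpha_n$.

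The difference lies in how the cross term is absorbed. The paper defers to Lemma 5 of the $J=1$ paper and bounds the whole cross term $2c_n|\E[t(X_{1,1})]V_{n,J}(t)|$ by $\tfrac18\|t\|_R^2$; this is where $n\ge 192/\rho$ is used. It then adds this to $\tfrac12\|t\|_R^2\le\alpha_nZ_{n,J}(t)+c_n(\E[t(X_{1,1})])^2\le\tfrac32\|t\|_R^2$. Your weighted Young inequality instead spends the slack in the coefficient of $c_n\mu_t^2$ on the $\mu_t^2$ part and pays only a multiple of $c_n/\alpha_n$ on the variance part. As a result, your argument only needs $c_n/\alpha_n\le 1/32$, which is weaker than $n\ge 192/\rho$.

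Two further points favour your write-up:
\begin{itemize}
\item You apply $V_{n,J}(t)^2\le Z_{n,J}(t)$ after averaging over $j$, which is the right order. The paper states the bound for each $V_{n,j}(t)$ separately on $\Omega_m^{(1)}$, although that event only controls the average $Z_{n,J}(t)$, not each $Z_{n,j}(t)$.
\item Your computation covers both readings of $S_{n,J}$. One is the literal definition with $c_n\E[t^2(X_{1,1})]$; the other uses $c_n(\E[t(X_{1,1})])^2$, which is consistent with Lemma~\ref{Lemm2} and is what the paper's proof actually uses. Both work because the extra term $c_n\operatorname{Var}(t(X_{1,1}))$ is nonnegative and at most $\tfrac1{190}\alpha_n\operatorname{Var}(t(X_{1,1}))$.
\end{itemize}
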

 
\begin{lem}\label{Omega2Last} Assume Condition (\ref{Stability}) and ${\mathfrak c}_r  c^\star \leq 1/8$, then, on $\Omega_m^{(2)}$, for all $ t \in S_m$ it holds 
$$(n-1)\beta_n |U_{n,J}(t)|\leq \frac 18 \|t\|_R^2. $$
\end{lem}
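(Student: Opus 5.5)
The statement to prove is Lemma \ref{Omega2Last}. On $\Omega_m^{(2)}$ we control $|U_{n,J}(t)|$ by $\|t\|^2$, and then turn $\|t\|^2$ into $\|t\|_R^2$ through the stability condition. Since $(n-1)|\beta_n| = \frac{(n-1)\rho}{(1-\rho)(1+(n-1)\rho)} \le \frac{1}{1-\rho}$, on $\Omega_m^{(2)}$ we get for every $t\in S_m$
$$(n-1)|\beta_n|\,|U_{n,J}(t)| \le \frac{1}{1-\rho}\,{\mathfrak c}_r \frac{(m\vee L(m))\sqrt{\log(J)\vee 1}\log(n)}{n\sqrt{J}}\,\|t\|^2 .$$
For $t=0$ the claim is trivial, so this covers all $t\in S_m$.

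The key step is to compare $\|t\|^2=\|\mathbf a\|^2$ with $\|t\|_R^2=\mathbf a^\top \ePsi \mathbf a$. First, the Sherman--Morrison representation from the proof of Proposition \ref{prop::new-prop3} gives $\frac1{\alpha_n}\ePsi = \Psi_m + d_n u_mu_m^\top \succeq \Psi_m$. Hence
$$\|t\|_R^2 \ge \alpha_n\, \mathbf a^\top\Psi_m\mathbf a \ge \alpha_n \|\Psi_m^{-1}\|_{\rm op}^{-1}\|t\|^2 .$$
Second, $\alpha_n \ge 1/(1-\rho)$ holds, because $\alpha_n(1-\rho)=\frac{1+(n-2)\rho}{1+(n-1)\rho}$ is slightly below $1$. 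So the bound is exactly $\alpha_n\ge \frac{1+(n-2)\rho}{(1-\rho)(1+(n-1)\rho)}$, and this factor cancels against the earlier $1/(1-\rho)$ up to the ratio $\frac{1+(n-2)\rho}{1+(n-1)\rho}$. A cleaner route is to keep $(n-1)|\beta_n|/\alpha_n = \frac{(n-1)\rho}{1+(n-2)\rho}\le 1$ directly, which avoids the ratio entirely. This yields
$$(n-1)|\beta_n|\,|U_{n,J}(t)|\le {\mathfrak c}_r\,(m\vee L(m))\,\|\Psi_m^{-1}\|_{\rm op}\,\frac{\sqrt{\log(J)\vee1}\log(n)}{n\sqrt J}\,\|t\|_R^2 .$$

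To conclude, bound $\|\Psi_m^{-1}\|_{\rm op}$ by $\|\Psi_m^{-1}\|_{\rm op}\vee 1$. (The paper writes $\vee$ but defines it as $\min$; we read it as the maximum, which is what the stability condition needs.) Condition \eqref{Stability} then turns the right-hand side into ${\mathfrak c}_r c^\star\|t\|_R^2$, and the hypothesis ${\mathfrak c}_r c^\star\le 1/8$ gives the claim.

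The only delicate point is the direction of the matrix inequality. Dividing by $\alpha_n$ rather than multiplying by $1/(1-\rho)$ avoids losing a constant. Positive semidefiniteness of $u_mu_m^\top$ makes this step immediate.
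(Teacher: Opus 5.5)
Your argument is correct and is essentially the paper's proof. On $\Omega_m^{(2)}$ you use $\|t\|^2\le \|\Psi_m^{-1}\|_{\rm op}\,{\rm Var}(t(X_{1,1}))$, then $\alpha_n{\rm Var}(t(X_{1,1}))\le \|t\|_R^2$, then $(n-1)|\beta_n|/\alpha_n\le 1$, and finally \eqref{Stability}; only the order of the last two steps differs. Your Loewner bound $\frac{1}{\alpha_n}\Psi_m^{(R)}\succeq \Psi_m$ is just the matrix form of the second step, since $\|t\|_R^2=\alpha_n{\rm Var}(t(X_{1,1}))+c_n(\E[t(X_{1,1})])^2$. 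Delete the aside asserting $\alpha_n\ge 1/(1-\rho)$: it is false, as your own computation $\alpha_n(1-\rho)<1$ shows, though your final chain never uses it.
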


Indeed, combining these two results with equality (\ref{TwoTerms}), one concludes that on $\Omega_m^{(1)}\cap \Omega_m^{(2)}$ it holds for $t\in S_m$ with $\Vert t\Vert^2\not=0$ that
\begin{align*}
    \frac{1}{4} \Vert t\Vert^2_R = \left(\frac12 -\frac18 -\frac18\right)\Vert t\Vert^2_R \leq \Vert t\Vert_{,n,R,J}^2 \leq \left(\frac32  + \frac18 +\frac18\right)\Vert t\Vert^2_R = \frac74 \Vert t\Vert^2_R.
\end{align*}

Next the two following Lemmas give the result of the second step. 
\begin{lem}\label{Tropp} Let Assumption \ref{ass::theoretical-bound} be satisfied. For $p\geq 1$, assume that for the constant $c^\star$ of stability condition \eqref{Stability} it holds $c^\star\leq  {\mathbf c}_p^\star/3$ for ${\mathbf c}_p^\star$ defined in \eqref{eq::constants-def}.  Then, it holds under Assumption \ref{ass::theoretical-bound}, for $n\geq3$ and $J \leq n(n-1)/\log^2(n)$ that $${\mathbb P}((\Omega_m^{(1)})^c)\leq 2(nJ)^{-p} .$$ 
\end{lem}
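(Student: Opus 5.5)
We reduce the supremum over $t\in S_m$ to an operator-norm statement about a sum of independent random matrices and then apply the matrix Chernoff inequality (Lemma \ref{lem::tropp-chernoff}).

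\emph{Reduction to matrices.} Write $t=\sum_k a_k\varphi_k$ and set $\bar\varphi_k(x):=\varphi_k(x)-\E[\varphi_k(X_{1,1})]$. Then
\[
Z_{n,J}(t)=\frac1{nJ}\sum_{i,j}\Big(\sum_k a_k\bar\varphi_k(X_{i,j})\Big)^2=\mathbf a^\top\widehat\Psi_m^{c}\mathbf a,
\qquad
\widehat\Psi_m^{c}:=\frac1{nJ}\sum_{i=1}^n\sum_{j=1}^J \bar\varphi(X_{i,j})\bar\varphi(X_{i,j})^\top,
\]
where $\bar\varphi=(\bar\varphi_1,\dots,\bar\varphi_m)^\top$. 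We have $\E[\widehat\Psi_m^c]=\Psi_m$ and $\operatorname{Var}(t(X_{1,1}))=\mathbf a^\top\Psi_m\mathbf a$. Assumption \ref{ass::theoretical-bound}(ii) makes $\Psi_m$ invertible, so the event $\Omega_m^{(1)}$ is exactly
\[
\big\|\Psi_m^{-1/2}\widehat\Psi_m^c\Psi_m^{-1/2}-{\rm Id}_m\big\|_{\rm op}\le \tfrac12 .
\]

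\emph{Applying matrix Chernoff.} The matrices $A_{i,j}:=\frac1{nJ}\Psi_m^{-1/2}\bar\varphi(X_{i,j})\bar\varphi(X_{i,j})^\top\Psi_m^{-1/2}$ are $nJ$ i.i.d. positive semidefinite matrices, since the $X_{i,j}$ are i.i.d. over both indices. Their sum has mean ${\rm Id}_m$, so $\mu_{\min}=\mu_{\max}=1$. Each summand is bounded by
\[
\|A_{i,j}\|_{\rm op}\le \frac1{nJ}\|\Psi_m^{-1}\|_{\rm op}\,|\bar\varphi(x)|^2 .
\]
Moreover,
\[
|\bar\varphi(x)|^2\le 2\sum_k\varphi_k^2(x)+2\sum_k\E[\varphi_k(X)]^2\le 2L(m)+2\E\Big[\sum_k\varphi_k^2(X)\Big]\le 4L(m).
\]
This gives the bound $R_0\le 4L(m)\|\Psi_m^{-1}\|_{\rm op}/(nJ)$. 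Lemma \ref{lem::tropp-chernoff} with $\delta=1/2$ then yields
\[
\mathbb P\big((\Omega_m^{(1)})^c\big)\le 2m\exp\!\big(-c_{1/2}/R_0\big),
\qquad c_{1/2}=\min\big\{(3/2)\log(3/2)-1/2,\ (1/2)\log(1/2)+1/2\big\}.
\]
The constant ${\mathbf c}_p^\star=(1-3\log(3/2))/(8(p+1))$ reflects these constants; up to sign conventions, $|1-3\log(3/2)|/2$ is the relevant Chernoff exponent.

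\emph{Plugging in stability, which is the delicate step.} The stability condition \eqref{Stability} gives
\[
\frac1{R_0}\ge \frac{nJ}{4c^\star}\cdot\frac{\log(n)\sqrt{1\vee\log J}}{n\sqrt J}=\frac{\sqrt J\log(n)\sqrt{1\vee\log J}}{4c^\star}.
\]
We need this to dominate $(p+1)\log(nJ)$, which absorbs the factor $m\le nJ$. Since $\log(nJ)=\log n+\log J$, it suffices to check two things.
\begin{itemize}
\item[(a)] $\sqrt J\sqrt{1\vee\log J}\,\log n\gtrsim \log n$, which is trivial.
\item[(b)] $\sqrt J\sqrt{1\vee\log J}\,\log n\gtrsim \log J$. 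Here the constraint $J\le n(n-1)/\log^2 n$ keeps $\log J\le 2\log n$, so the bound is comfortable even for $J=1$.
\end{itemize}
Hence $\sqrt J\log(n)\sqrt{1\vee\log J}\ge \tfrac13\log(nJ)$. With $c^\star\le{\mathbf c}_p^\star/3$, the exponent is then at least $(p+1)\log(nJ)$, giving
\[
\mathbb P\big((\Omega_m^{(1)})^c\big)\le 2m(nJ)^{-(p+1)}\le 2(nJ)^{-p}.
\]
The main obstacle is bookkeeping the numerical constants so that the factors $4$, $3$ and $8$ reconcile exactly with ${\mathbf c}_p^\star$. If the crude factor $4$ in $|\bar\varphi|^2$ proves too lossy, I would instead bound $\mathbf a^\top\bar\varphi\bar\varphi^\top\mathbf a\le 2(t^2(x)+\E[t]^2)$ directly. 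The condition $n\ge3$ is used only to ensure $\log n\ge1$.
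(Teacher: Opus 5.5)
Your proposal is correct and follows essentially the paper's proof. Both apply the matrix Chernoff inequality to the $nJ$ i.i.d.\ whitened rank-one matrices, and both use $\log(nJ)\le 3\log n$ (from $J\le n(n-1)/\log^2 n$) to turn the stability condition into $L(m)\|\Psi_m^{-1}\|_{\rm op}\le \mathbf{c}_p^\star\, nJ/\log(nJ)$. Your constants ($|\bar\varphi|^2\le 4L(m)$, $\delta=1/2$, exponent $(3\log(3/2)-1)/2$) match $\mathbf{c}_p^\star$ exactly once the sign typo in its definition is corrected, and the bound $m\le nJ$ you absorb follows from Assumption \ref{ass::theoretical-bound}(ii).
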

\begin{lem}\label{Ustat}    
Under Assumption \ref{ass::theoretical-bound} it holds for $n\geq 3$, $J \leq n(n-1)/\log^2(n)$ and $r\geq 1$ that
$${\mathbb P}((\Omega_m^{(2)})^c)\leq  5.44 \,  \frac{1}{(nJ)^{r-2\min(1/s,1)}}.$$  
\end{lem}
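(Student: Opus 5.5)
The plan is to reduce the supremum over $S_m$ to a finite family of scalar degenerate $U$-statistics, bound each with the Gin\'e--Nickl inequality (Theorem 3.4.8 in \cite{GineNickl2016Book}), and finish with a union bound. Write $t=\sum_{k=1}^m a_k\varphi_k$. Since the basis is orthonormal, $\|t\|^2=|\mathbf a|^2$. By bilinearity, $U_{n,J}(t)=\mathbf a^\top \widehat U_m \mathbf a$, where $\widehat U_m=(U_{n,J}(\varphi_k,\varphi_l))_{k,l}$ and $U_{n,J}(\cdot,\cdot)$ is the polarized version of \eqref{eq::uJ}. Hence
$$\sup_{t\in S_m,\|t\|\neq 0}\frac{|U_{n,J}(t)|}{\|t\|^2}=\|\widehat U_m\|_{\rm op}\leq \|\widehat U_m\|_F\leq m\max_{k,l}|U_{n,J}(\varphi_k,\varphi_l)|.$$
It therefore suffices to show that, for each pair $(k,l)$, the event $|U_{n,J}(\varphi_k,\varphi_l)|> {\mathfrak c}_r \log(n)\sqrt{\log(J)\vee 1}/(n\sqrt J)$ has probability at most $5.44\,(nJ)^{-r}$. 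A union bound over the $m^2$ pairs then costs a factor $m^2$. By the stability condition, $m^{s\vee 1}\leq n\sqrt J$, and together with $m\leq nJ$ this gives $m^2\leq (nJ)^{2\min(1/s,1)}$, which is exactly the loss in the exponent of the statement.

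For a fixed pair, view the data as $n$ i.i.d. vectors $W_i=(X_{i,1},\dots,X_{i,J})\in\Rz^J$. Set $\bar\varphi=\varphi-\E[\varphi(X_{1,1})]$ and define the kernel
$$h(w,w')=\frac 1{2J}\sum_{j=1}^J\big(\bar\varphi_k(w_j)\bar\varphi_l(w'_j)+\bar\varphi_l(w_j)\bar\varphi_k(w'_j)\big).$$
Then $U_{n,J}(\varphi_k,\varphi_l)=\frac{1}{n(n-1)}\sum_{i\neq i'}h(W_i,W_{i'})$. This is a single canonical (degenerate) $U$-statistic of order two, because the $\bar\varphi$'s are centered. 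Using $\sup_A|\varphi_k|\leq\theta$ from Assumption \ref{ass::theoretical-bound}(i), we get $\|\bar\varphi_k\|_\infty\le 2\theta$. I compute the four parameters of the Gin\'e--Nickl bound as follows.
\begin{itemize}
\item $\|h\|_\infty\leq 4\theta^2$.
\item $\E h^2\lesssim \theta^4/J$. Cross terms $j\neq j'$ vanish by independence and centering across coordinates, which yields the crucial factor $1/J$.
\item $\sup_w \E[h^2(w,W)]\lesssim\theta^4/J$, by the same orthogonality in $j$.
\item The operator-type constant is bounded by $n\,(\E h^2)^{1/2}\lesssim n\theta^2/\sqrt J$.
\end{itemize}
Take $x=r\log(nJ)$, so that $2.72\cdot 2\,e^{-x}=5.44\,(nJ)^{-r}$, and divide by $n(n-1)$. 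The deviation is then of order
$$\frac{\sqrt x}{n\sqrt J}+\frac{x}{n\sqrt J}+\frac{x^{3/2}}{n^{3/2}\sqrt J}+\frac{x^2}{n^2},$$
times $c_0\theta^2$.

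The final step is to show that each of these four terms is at most a multiple of $\log(n)\sqrt{\log(J)\vee1}/(n\sqrt J)$, with the multiple absorbed into ${\mathfrak c}_r=6r^2c_0(1+\theta^2)8\sqrt6$. Here $n\ge 3$ and $J\le n(n-1)/\log^2(n)$ give $\log(nJ)\le 3\log n$, which controls the $x$ and $x^{3/2}$ terms. The term $x^2/n^2$ is handled by $\sqrt J\le n/\log n$, so that $x^2/n^2\lesssim r^2\log^2(n)/n^2\le r^2\log(n)/(n\sqrt J)$. I expect this bookkeeping to be the main obstacle. The delicate terms are the second and fourth ones: the second requires trading $\log(nJ)$ against $\log(n)\sqrt{\log J}$, and the fourth is where the restriction on $J$ is genuinely needed. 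A secondary point requiring care is verifying that the variance computations give a full factor $1/J$ even though $\bar\varphi_k\bar\varphi_l$ is not centered for $k\ne l$. This should hold because each coordinate appears only through the product with an independent copy $W'$, and $W'$ is centered.
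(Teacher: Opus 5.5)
Your proposal is correct and follows essentially the paper's route. You reduce to the $m^2$ entries $U_{n,J}(\varphi_p,\varphi_q)$, treat each as a canonical $U$-statistic in the i.i.d.\ vectors $X_{i,\bullet}$ (with variance parameters gaining a factor $1/J$ from independence across $j$), apply Gin\'e--Nickl's Theorem 3.4.8 with $u=r\log(nJ)$, use $J\le n(n-1)/\log^2(n)$ to compare the four terms, and pay $m^2\le (nJ)^{2\min(1/s,1)}$ in a union bound. The only difference is bookkeeping: you use a uniform per-entry threshold via $\|\widehat U_m\|_F\le m\max_{k,l}|U_{n,J}(\varphi_k,\varphi_l)|$ with crude $\theta$-bounds, whereas the paper uses pair-dependent thresholds $\delta_{p,q}$ and $\sum_p\int\varphi_p^2 f\le L(m)$; your cruder version suffices because the threshold defining $\Omega_m^{(2)}$ already contains the factor $m\vee L(m)\ge m$.
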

Finally, gathering \eqref{eq::step1}, Lemma \ref{Tropp} and Lemma \ref{Ustat} gives the result, i.e.
 \begin{align*}
     \mathbb{P}(\Omega_m^c)\leq \mathbb{P}((\Omega_m^{(1)})^c) + \mathbb{P}((\Omega_m^{(2)})^c) \leq 12 (nJ)^{-\min( p , r-2/s)}.
 \end{align*}
The proofs of Lemmas \ref{exLem5}-\ref{Ustat} are given in Section \ref{ProofLemmas} below.
\end{proof}

\subsection{Proofs of Lemmas \ref{exLem5}-\ref{Ustat}}\label{ProofLemmas}
We use the convention $\|\Psi_m^{-1}\|_{\rm op}=+\infty$ if $\Psi_m$ is not invertible. 
\begin{proof}[Proof of Lemma \ref{exLem5}] Recall that we consider 
$$S_{n,J}(t) = \alpha_n Z_{n,J}(t)+ c_n {\mathbb E}[t^2(X_{1,1})] + 2c_n {\mathbb E}[t(X_{1,1})] V_{n,J}(t)= \Vert t\Vert_{n,J,R}^2 - (n-1)\beta_n U_{n,J}(t).$$
On $\Omega_m^{(1)}$,  we have for all $ t\in S_m$ that
$$ \frac 12 {\rm Var}(t(X_{1,1}) \leq Z_{n,J}(t) \leq \frac 32 {\rm Var}(t(X_{1,1})).$$
Therefore, using  $$\frac 12c_n({\mathbb E}[t(X_{1,1})])^2\leq c_n({\mathbb E}[t(X_{1,1})])^2\leq \frac 32c_n({\mathbb E}[t(X_{1,1})])^2$$ and $\alpha_n{\rm Var}(t(X_{1,1}))+ c_n({\mathbb E}[t(X_{1,1})])^2=\|t\|_R^2$, we get 
\begin{equation}\label{step1} \frac 12 \|t\|_R^2 \leq \alpha_n Z_{n,J}(t) + c_n({\mathbb E}[t(X_{1,1})])^2\leq \frac 32 \|t\|_R^2.
\end{equation}
Now, it holds for all $j=1,\ldots, J$ that $V_{n,j}^2(t)\leq Z_{n,j}(t)$. Following the same proof as in Lemma 5 of \cite{comte2025non}, this implies  that on $\Omega_m^{(1)}$,  for $n\geq 192/\rho$, it holds
$$2c_n |{\mathbb E}[t(X_{1,1})] V_{n,j}(t)|\leq \frac{\sqrt{3}}{n\rho} \|t\|_R^2\leq \frac 18 \, \|t\|_R^2.$$
Thus, averaging over $j$ yields 
$$2c_n |{\mathbb E}[t(X_{1,1})] V_{n,J}(t)|\leq \frac 18 \, \|t\|_R^2.$$
As a consequence, this inequality combined with (\ref{step1}) imply
$$(\frac 12 -\frac 18) \|t\|_R^2 \leq S_{n,J}(t) \leq (\frac 32 + \frac 18)\|t\|_R^2, $$
which ends the proof.
\end{proof}

\begin{proof}[Proof of Lemma \ref{Omega2Last}]
On $\Omega_m^{(2)}$, for any $t\in S_m$, 
$$|U_{n,J}(t)|\leq {\mathfrak c}_r \|\Psi_m^{-1}\|_{{\rm op}}  \frac{(m\vee L(m))\sqrt{\log(J)}\log(n)}{n\sqrt{J}}{\rm Var}(t(X_{1,1})).  $$
Under Condition (\ref{Stability}), we get 
$$|U_{n,J}(t)|\leq {\mathfrak c}_r  c^* {\rm Var}(t(X_{1,1}).$$
So, as $(n-1)\beta_n/\alpha_n\leq 1$ and $\alpha_n {\rm Var}(t(X_1))\leq \|t\|_R^2$, we get
$$(n-1)\beta_n |U_{n,J}(t)|\leq {\mathfrak c}_r  c^\star ((n-1)\beta_n/\alpha_n) \|t\|_R^2\leq  {\mathfrak c}_r  c^* \|t\|_R^2\leq \frac 18 \|t\|_R^2,$$
as ${\mathfrak c}_r  c^*\leq 1/8$, which ends the proof of Lemma \ref{Omega2Last}. 
\end{proof}

\begin{proof}[Lemma \ref{Tropp}]
The result is analogous to Lemma 3 in \cite{comte2025non}. 
First, note that  due to $J \leq n(n-1)/\log^2(n)$ it also holds that
 \begin{align}
     \frac{\log(nJ)}{\log(n)\sqrt{\log(J)\vee 1}} \leq \frac{3\log(n)}{\log(n)\sqrt{\log(J)\vee 1}}\leq 3 \log(n).\label{eq::step-n-j}
 \end{align}
 Combining this with the stability condition \eqref{Stability} it follows that
 \begin{align}
     L(m)\|\Psi_m^{-1}\|_{{\rm op}} &\leq c^\star  \frac{n\sqrt{J}}{\log(n)\sqrt{1\vee\log(J)}} \leq \frac{{\mathbf c}_p^\star}{3} \frac{nJ}{\log(nJ)} \frac{n\sqrt{J}\log(nJ)}{nJ\log(n)\sqrt{1\vee\log(J)}} \nonumber\\
     &\leq {\mathbf c}_p^\star \frac{nJ}{\log(nJ)},\label{TroppCond}
 \end{align}

where we recall $\Psi_m=\left( {\rm cov}(\varphi_j(X_{1,1}), \varphi_k(X_{1,1})\right)_{1\leq j, k\leq m}$.
Here, $\widehat{\Psi}_m$ is defined as
\begin{align*}
    \widehat{\Psi}_m:= \left( \frac{1}{nJ}\sum_{j=1}^J\sum_{i=1}^n \left((\varphi_{k_1}(X_{i,j}) - \E[\varphi_{k_1}(X_{1,1})]) (\varphi_{k_2}(X_{i,j}) - \E[\varphi_{k_2}(X_{1,1})]) \right) \right)_{1\leq k_1,k_2 \leq m}
\end{align*}
and we have a decomposition  of the matrix of interest as a sum over $i,j$ of
\begin{align*}
    K_m(X_{i,j}) =  \Psi_m^{-1/2} \left( \left((\varphi_{k_1}(X_{i,j}) - \E[\varphi_{k_1}(X_{1,1})]) (\varphi_{k_2}(X_{i,j}) - \E[\varphi_{k_2}(X_{1,1})]) \right) \right)_{1\leq k_1,k_2 \leq m}  \Psi_m^{-1/2}.
\end{align*}
So, the proof follows from Tropp Chernov inequality, see Lemma \ref{lem::tropp-chernoff} in the appendix from \cite{tropp2012user}, as used in the proof of Lemma 3 in \cite{comte2025non}, but involving $nJ$ independent and identically distributed random matrices instead of $n$, under (\ref{TroppCond}) implied by (\ref{Stability}). 
\end{proof}

\begin{proof}[Proof of Lemma \ref{Ustat}]
    This is result the generalization of Lemma 4 from \cite{comte2025non}.
    First note that, with Cauchy Schwarz Inequality, for $t=\sum_{k=1}^m a_k\varphi_k$ and $\|t\|^2=\sum_{k=1}^m a_k^2=1$, we get
\begin{eqnarray*} 
U_{n,J}(t)&= & \sum_{1\leq k, \ell \leq m} a_k a_\ell \left(\frac 1J \sum_{j=1}^J U_{n,j}(\varphi_k, \varphi_\ell) \right) 
\leq \left(\sum_{1\leq k,\ell \leq m}\left( \frac 1J\sum_{j=1}^J U_{n,j}(\varphi_k, \varphi_\ell)\right)^2\right)^{1/2}.
\end{eqnarray*}
Note that using again \eqref{eq::step-n-j} we have that
\begin{align*}
    {\mathfrak c}_r \frac{L(m)\sqrt{\log(J)\vee 1}\log(n)}{n\sqrt{J}} &\geq 3\sqrt{2}r^2 c_0 (1+\theta^2) 8\sqrt{6}  \frac{\sqrt{\log(J)\vee 1}\log(n)\sqrt{n(n-1)J}}{n\sqrt{J}\log(nJ)}  \frac{L(m)\log(nJ)}{\sqrt{n(n-1)J}}\\
    &\geq \sqrt{2}r^2 c_0 (1+\theta^2) 8\sqrt{6}  \frac{\sqrt{n(n-1)J}}{n\sqrt{J}}  \frac{L(m)\log(nJ)}{\sqrt{n(n-1)J}}\\
    &\geq r^2 c_0 (1+\theta^2) 8\sqrt{6} \frac{L(m)\log(nJ)}{\sqrt{n(n-1)J}}.
\end{align*}
Consequently, we can write 
\begin{eqnarray}
{\mathbb P}((\Omega_m^{(2)})^c)
& \leq  & {\mathbb P}\left( \sup_{t\in S_m, \|t\| \not= 0} \left\vert\frac{U_{n,J}(t)}{\|t\|^2} \right\vert > 8\sqrt{6} r^2 c_0  \log(nJ) \frac{L(m)(1+ \theta^2)}{\sqrt{Jn(n-1)}} \right) \nonumber \\
& \leq & {\mathbb P}\left( \left(\sum_{1\leq p,q  \leq m} U_{n,J}^2(\varphi_p, \varphi_q)\right)^{1/2}  > 8\sqrt{6} c_0 r^2 \log(nJ) \frac{L(m)(1+\theta^2)}{\sqrt{Jn(n-1)}} \right)\nonumber\\
& =  & {\mathbb P}\left( \sum_{1\leq p,q  \leq m} U_{n,J}^2(\varphi_p, \varphi_q)  > 2\cdot 192 c_0^2 r^4 \log^2(nJ) \frac{L^2(m)(1+\theta^2)^2}{Jn(n-1)} \right).\label{eq::proof-omega2}
\end{eqnarray}
The idea is to apply the U-statistic concentration inequality in  \cite{GineNickl2016Book} (Theorem 3.4.8 p. 183). For this,  we consider for fixed $p,q\in\{1,\ldots,m\}$ 
$$U_{J,n}(\varphi_p,\varphi_q) = \frac 1J\sum_{j=1}^J U_{n,j}(\varphi_p, \varphi_q)=\frac 1{n(n-1)} \sum_{i=2}^n\sum_{\ell=1}^{i-1} g_{p,q}(X_{i,\bullet},X_{\ell,\bullet}),$$
with $$g_{p,q}(X_{i,\bullet},X_{\ell,\bullet}):= \frac 1J \sum_{j=1}^J [\bar \varphi_p(X_{i,j})\bar \varphi_q(X_{\ell, j}) + \bar\varphi_p(X_{\ell, j}) \bar\varphi_q(X_{i,j})] \quad \mbox{ and }  \quad \bar\varphi(x)=\varphi(x)-\int \varphi(y) \, f(y) dy.$$ 
Under Assumption \ref{ass::theoretical-bound} (i), the basis functions $\varphi_p$ are bounded by $\theta$. Consequently, we have that $U_{J,n}(\varphi_p, \varphi_q)$ is a canonical U-statistic with symmetric and bounded kernel $g_{p,q}$ with i.i.d. vector observations $(X_{i, \bullet})_{1\leq i\leq n}$. We obtain the following quantities for \cite{GineNickl2016Book} (Theorem 3.4.8 p. 183):
\begin{align*}
    A&=8\theta^2, \quad C=D=2\sqrt{\frac{n(n-1)}J} \sqrt{\int \varphi_p^2(x) f(x) dx \; \int \varphi_q^2(x) \,f(x) dx}, \\
    B^2 &=4\theta^2\frac{(n-1)}J \left( \int \varphi_p^2(x) \, f(x) dx + \int \varphi_q^2(x) \,f(x) dx\right).
\end{align*}
For $c_0$ denoting the maximum of all the numerical constants in the deviation inequality, i.e. $c_0 = 18.6$ by choosing $\varepsilon = 2$, 
and setting
$$\delta_{p,q}:= \frac{c_0}{n(n-1)} \left[ C\sqrt{r\log(nJ)} + D(r\log(nJ)) + B(r\log(nJ))^{3/2} + A (r\log(nJ))^2\right],$$ the deviation inequality yields for $u= r\log (nJ)$, $r\geq 1$,  that
\begin{align}
\mathbb{P}(\vert U_{J,n}(\varphi_p, \varphi_q)\vert \geq \delta_{p,q}) \leq 2e^{1- r\log(nJ)} \leq 5.44(nJ)^{-r}.\label{eq::deviation-ineq}
\end{align}
Since we assume $n\geq 3$ and $J\geq 1$ we have also that $nJ\geq e$ and, consequently, 
$$\delta_{p,q}\leq c_0r^2 \log(nJ) \left[ 4\frac{\sqrt{\int \varphi_p^2 \, f \; \int \varphi_q^2 \,f}}{\sqrt{n(n-1)J}} + 2\theta \sqrt{\log(nJ)}\frac{\sqrt{\int \varphi_p^2 \, f} + \sqrt{\int \varphi_q^2 \,f}}{\sqrt{J}n \sqrt{n-1}} + \frac{8\theta^2\log(nJ)}{n(n-1)}\right].$$
For $J\leq n(n-1)/\log^2(n)$, then $\log^2(nJ)/[n(n-1)]\leq 9/J$ and $\sqrt{\log(nJ)/n} \leq \sqrt{3}\leq 2$, thus
\begin{align*}
    \delta_{p,q}\leq \frac{c_0 r^2 \log(nJ)}{\sqrt{n(n-1)J}} \left[ 4\sqrt{\int \varphi_p^2 \, f \; \int \varphi_q^2 \,f} + 4\theta \left(\sqrt{\int \varphi_p^2 \, f} + \sqrt{\int \varphi_q^2 \,f}\right) + 8\theta^2 \right]^2.
\end{align*}
Using then that $\sum_{1\leq p,q\leq m}(\sqrt{\int \varphi_p^2 \, f} + \sqrt{\int \varphi_q^2 \,f})^2 \leq 2m L(m) + 2mL(m)= 4L(m)m \leq 4(m\vee L(m))^2$ one obtains
\begin{align*}
    \sum_{1\leq p,q\leq m} \delta_{p,q}^2 &\leq \frac{c_0^2 r^4 \log^2(nJ)}{n(n-1)J} \left(48 L^2(m) + 192 \theta^2 mL(m) + 192 m^2\theta^4  \right) \\
    &\leq \frac{c_0^2 r^4 \log^2(nJ)}{n(n-1)J} 2\cdot 192 (m\vee L(m))^2(1+\theta^2)^2.
\end{align*}
Inserting the last equation and \eqref{eq::deviation-ineq} into \eqref{eq::proof-omega2}, we obtain
\begin{eqnarray*}
{\mathbb P}((\Omega_m^{(2)})^c)& \leq & {\mathbb P}\left( \sum_{1\leq p,q  \leq m} U_{n,J}^2(\varphi_p, \varphi_q)  > \sum_{1\leq p,q\leq m} \delta_{p,q}^2\right)\\
&\leq &  \sum_{1\leq p,q  \leq m} {\mathbb P}\left( \vert U_{n,J}^2(\varphi_p, \varphi_q) \vert  > \delta_{p,q}\right)\\
&\leq & 5.44 \,  \frac{m^2}{(nJ)^r}\leq  5.44 \,  
\frac{1}{(nJ)^{r-2\min(1/s,1)}},
\end{eqnarray*}
where in the last inequality we used that $m^s \leq n\sqrt{J}\leq nJ$ if $s\geq 1$, and $m\leq nJ$ if $s<1$, which yields $m\leq (nJ)^{\frac 1s \vee 1}$. 
\end{proof}

\subsection{Proofs of Lemmas \ref{OmegaXi} - \ref{lem::bound-emp-process}}\label{sec::proofs-lemmas-adaptive}
\begin{proof}[Proof of Lemma \ref{OmegaXi}]
We acknowledge \cite{huang:hal-04901917} for a decisive improvement of the result obtained in this lemma. Consider $\omega\in\Omega_{n,J}$ and
\begin{displaymath}
\widehat G_{m}(\omega) :=
(\Psi_{m}^{(R)})^{-\frac{1}{2}}\widehat\Psi_{m}^{(R)}(\omega)(\Psi_{m}^{(R)})^{-\frac{1}{2}} \textrm{ $;$ }\forall  m\in\mathcal M_{n,J}^{+}.
\end{displaymath}
For any $ m\in\mathcal M_{n,J}^{+}$, since $\omega\in\Omega_{m}$,
\begin{displaymath}
{\rm Sp}(\widehat G_{m}(\omega))
\subset\left[\frac{1}{4},\frac{7}{4}\right],
\quad\textrm{and then}\quad
{\rm Sp}(\widehat G_{m}^{-1}(\omega))
\subset\left[\frac{4}{7},4\right].
\end{displaymath}
Moreover,
\begin{displaymath}
(\widehat\Psi_{m}^{(R)})^{-1}(\omega) =
(\Psi_{m}^{(R)})^{-\frac{1}{2}}\widehat G_{m}^{-1}(\omega)(\Psi_{m}^{(R)})^{-\frac{1}{2}}.
\end{displaymath}
So, thanks to a well-known property of the Loewner order,
\begin{displaymath}
\frac{4}{7}\mathbf x^\top (\Psi_{m}^{(R)})^{-1}\mathbf x
\leqslant\mathbf x^\top (\widehat\Psi_{ m}^{(R)})^{-1}(\omega)\mathbf x\leqslant
4\mathbf x^\top (\Psi_{m}^{(R)})^{-1}\mathbf x
\textrm{ $;$ }\forall\mathbf x\in\mathbb R^{m},
\end{displaymath}
leading to 
\begin{displaymath}
\|(\Psi_{m}^{(R)})^{-1}\|_{\rm op}\leqslant
\frac{7}{4}\|(\widehat\Psi_{ m}^{(R)})^{-1}(\omega)\|_{\rm op}
\quad {\rm and}\quad
\|(\widehat\Psi_{ m}^{(R)})^{-1}(\omega)\|_{\rm op}\leqslant
4\|(\Psi_{m}^{(R)})^{-1}\|_{\rm op}
\end{displaymath}
almost surely. Therefore, $\omega\in\Xi_{n,J}$. \end{proof}

\begin{proof}[Proof of Lemma \ref{lem::bound-on-complement}]
    First we further decompose the risk in two terms, more precisely, 
   $${\mathbb E}[ \|\widehat b_{\widehat m}-b\|_{n,J,R}^2 {\mathbf 1}_{\Omega_{n,J}^c}] \leq 2 
{\mathbb E}[ \|\widehat b_{\widehat m}\|_{n,J,R}^2 {\mathbf 1}_{\Omega_{n,J}^c}] + 2{\mathbb E} [\|b\|_{n,J,R}^2 {\mathbf 1}_{\Omega_{n,J}^c}]:= 2{\mathbb S}_1 + 2{\mathbb S}_2. $$
We first consider ${\mathbb S}_2$. By Lemma \ref{lem::norm-properties}, it holds that $\|b\|_{n,J,R}^2=\frac 1{nJ} b({\mathbf X}_{\bullet,\bullet})^\top {\mathbf R}^{-1}b({\mathbf X}_{\bullet,\bullet}) \leq \frac 1{1-\rho}\|b\|_{n,J}^2$.
In addition, we get
$${\mathbb E}( \|b\|_{n,J,R}^4)\leq \frac 1{(1-\rho)^2} {\mathbb E}\left(\frac 1{nJ} \sum_{j=1}^J \sum_{i=1}^n b^4(X_{i,j})\right)= \frac{{\mathbb E}(b^4(X_{1,1}))}{(1-\rho)^2}.$$
{Note that since by Assumption \ref{ass::theoretical-bound} (i) it holds  $L(m)=m^s$, and consequently $\operatorname{card}(\mathcal{M}_{n,J}^+) \leq (7c^{\star\star} n\sqrt{J})^{1/s}$. } Applying Lemma  \ref{TheResult} with $p=  2 +  1/s $ and $r=2 + 3/s $, we obtain 
\begin{align}
    {\mathbb P}(\Omega_{n,J}^c) \leq 12\operatorname{card}(\mathcal{M}_{n,J}^+) (nJ)^{-\min(p,r-2/s)} \leq 12 (7 c^{\star\star})^{1/s} (nJ)^{- 2}.\label{eq::bound-omega-comp}
\end{align}
It follows by Cauchy-Schwarz inequality that 
\begin{align*}
    2{\mathbb S}_2  \leq 2 \sqrt{12}(7 c^{\star\star})^{1/(2s)} \frac{{\mathbb E}(b^4(X_{1,1}))}{(1-\rho)^2} \frac{1}{nJ}.
\end{align*}
Let us now consider ${\mathbb S}_1$.
First note that 
$$\sup_{{\mathbf x}\in {\mathbb R}^{nJ}, \|{\mathbf x}\|_{{\mathbb R}^{nJ}}=1} {\mathbf x}^\top \widehat\Phi_{\widehat m}\widehat\Phi_{\widehat m}^\top{\mathbf x} = \sup_{{\mathbf x}\in {\mathbb R}^{nJ}, \|{\mathbf x}\|_{{\mathbb R}^{nJ}}=1}\sum_{k=1}^{\widehat m} \left(\sum_{j=1}^J \sum_{i=1}^n x_{i,j}\varphi_k(X_{i,j})\right)^2 \leq (nJ) L(\widehat m), $$ 
where $\Vert \cdot\Vert_{{\mathbb R}^{nJ}}$ denotes the standard Euclidean norm on $\mathbb{R}^{nJ}$.
Further, we have that  $\widehat m\in \widehat{{\mathcal M}}_{n,J}$ and $L(\widehat m)\|(\widehat \Psi_{\widehat m}^{(R))})^{-1} \|_{{\rm op}} \leq 4c^{\star\star} n\sqrt{J}/(\log(n)\sqrt{\log(J)\vee 1})$. With  this we get 
\begin{align*}
{\mathbb S}_1 &= \frac 1{(nJ)^2} {\mathbb E}\left[ {\mathbf Y}_{\bullet,\bullet}^\top {\mathbf R}^{-1} \widehat\Phi_{\widehat m} (\widehat \Psi_{\widehat m}^{(R)})^{-1} \widehat \Phi_{\widehat m}^\top {\mathbf R}^{-1} {\mathbf Y}_{\bullet,\bullet}  {\mathbf 1}_{\Omega_{n,J}^c} \right] \\
&\leq 4c^{\star\star}\frac{1}{nJ} \frac{n\sqrt{J}}{\log(n)\sqrt{\log(J)\vee 1}} {\mathbb E}[  \Vert {\mathbf R}^{-1} {\mathbf Y}_{\bullet,\bullet} \Vert^2_{{\mathbb R}^{nJ}}{\mathbf 1}_{\Omega_{n,J}^c}].
\end{align*}
Now as $\|{\mathbf R}^{-1}\|_{{\rm op}}\leq 1/(1-\rho)$ applying Cauchy-Schwarz inequality it holds that
\begin{align*}
     {\mathbb E}[  \Vert {\mathbf R}^{-1} {\mathbf Y}_{\bullet,\bullet} \Vert^2_{{\mathbb R}^{nJ}}{\mathbf 1}_{\Omega_{n,J}^c}] &\leq \frac{1}{(1-\rho)^2} \sum_{j=1}^J\sum_{i=1}^n \E[Y_{i,j}^2 {\mathbf 1}_{\Omega_{n,J}^c}]\\
     &\leq \frac{nJ}{(1-\rho)^2} \sqrt{{\mathbb E}[Y_{1,1}^4]} \sqrt{{\mathbb P}(\Omega_{n,J}^c)}
 \end{align*}
Using again Lemma \ref{TheResult}  with $p=4 +  1/s $ and $r=4 + 3/s $ analogously to \eqref{eq::bound-omega-comp}, we obtain  the upper bound ${\mathbb P}(\Omega_{n,J}^c)\leq 12(7c^{\star\star})^{1/s}(nJ)^{-4}$ and, consequently, 
\begin{align*}
    {\mathbb S}_1 &\leq  \frac{4c^{\star\star}}{(1-\rho)^2} \frac{n\sqrt{J}}{\log(n)\sqrt{\log(J)\vee 1}} \sqrt{{\mathbb E}[Y_{1,1}^4]} \sqrt{{\mathbb P}(\Omega_{n,J}^c)}\\
    &\leq 12(7c^{\star\star})^{1/s}\frac{4c^{\star\star}}{(1-\rho)^2} \sqrt{{\mathbb E}[Y_{1,1}^4]} \frac{1}{nJ}.
\end{align*}
This finally yields the result, that is,
\begin{align*}
    {\mathbb E}[ \|\widehat b_{\widehat m}-b\|_{n,J,R}^2 {\mathbf 1}_{\Omega_{n,J}^c}] \leq \left( 24 (7c^{\star\star})^{1/s}\frac{4c^{\star\star}}{(1-\rho)^2} \sqrt{{\mathbb E}[Y_{1,1}^4]}+ 2 \sqrt{12}(7 c^{\star\star})^{1/(2s)} \frac{{\mathbb E}(b^4(X_{1,1}))}{(1-\rho)^2}\right)  \frac{1}{nJ}.
\end{align*}

\end{proof}

\begin{proof}[Proof of Lemma \ref{lem::bound-emp-process}]
    First, we define the $n\times (n+1)$ matrix $M$ such that ${\mathbf u}_{\bullet, j}=M  \varepsilon_{\bullet, j}$, where $\varepsilon_{\bullet, j}=(\varepsilon_{0,j}, \varepsilon_{1,j}, \dots, \varepsilon_{n,j})^\top$, more precisely, 
$$M:=\left(\begin{array}{cccccc}
\sqrt{\rho} & \sqrt{1-\rho} & 0 & 0 & \dots & 0 \\ 
 \sqrt{\rho}  &  0 & \sqrt{1-\rho} & 0 & \dots & 0 \\ 
 \vdots  &  \ddots &   &  \ddots & \ddots &  \vdots\\ 
 \vdots &  & \ddots && \ddots & \vdots \\ 
 \sqrt{\rho} & 0 &  \dots & 0  & \sqrt{1-\rho} & 0 \\
\sqrt{\rho} & 0 & \dots  & 0 & 0&  \sqrt{1-\rho} 
\end{array}\right).$$
The first column is a $n$-dimensional vector with coordinates all equal to $\sqrt{\rho}$, concatenated with $\sqrt{1-\rho} \,  {\rm Id}_{n}$. Moreover, we set ${\mathbf M}={\rm diag}(M, \dots, M)\in {\mathbb R}^{nJ\times (n+1)J}$.  Then, we can write 
$ \uvec= {\mathbf M}\varepsilon_{\bullet,\bullet}$, and 
$$\nu_{n,J}(t)=\frac 1{nJ} \uvec^\top \,{\mathbf R}^{-1}t(\Xvec) = \frac 1{nJ} \varepsilon_{\bullet,\bullet}^\top {\mathbf M}^\top {\mathbf R}^{-1}t(\Xvec) =\frac 1{nJ}  \sum_{i=0}^n\sum_{j=1}^J  \varepsilon_{i,j} \underbrace{[M^\top R^{-1} t({\mathbf X}_{\bullet,j})]_{i+1}}_{=\psi_{t,i,j}({\mathbf X})}.$$
Conditionally to  $\Xvec={\mathbf x}_{\bullet,\bullet}$, the term
$$\frac 1{nJ} \sum_{j=1}^J \sum_{i=0}^n \varepsilon_{i,j} \underbrace{[M^\top R^{-1} t({\mathbf x}_{\bullet,j})]_{i+1}}_{=\psi_{t,i,j}({\mathbf x}_{\bullet,j})}$$
is linear with respect to $t$, centered and a combination of independent variables.
Thus, we intend to apply the Talagrand Inequality. But for this, we need to be in a bounded context. As the $\varepsilon_{i,j}$'s are not supposed to be bounded, we rely on Assumption \ref{subExp} and  we split $\varepsilon_{i,j} =\varepsilon_{i,j}^{(1)}+\varepsilon_{i,j}^{(2)}$ and accordingly $\nu_{n,J}=\nu_{n,J,1} + \nu_{n,J,2}$, with $\varepsilon_{i,j}^{(2)}  :=\varepsilon_{i,j}-\varepsilon_{i,j}^{(1)}$ and $$\varepsilon_{i,j}^{(1)}:=\varepsilon_{i,j}{\mathbf 1}_{|\varepsilon_{i,j}|\leq K} - {\mathbb E}(\varepsilon_{i,j}{\mathbf 1}_{|\varepsilon_{i,j}|\leq K}). $$
Accordingly, we denote $\varepsilon_{\bullet,\bullet} = \varepsilon_{\bullet,\bullet}^{(1)} + \varepsilon_{\bullet,\bullet}^{(2)}$ and $\uvec^{(i)} = {\mathbf M}\varepsilon_{\bullet,\bullet}^{(i)}$ for $i=1,2$.
Then, noting that it holds $S_m+S_{\widehat m} =S_{m\vee \widehat m}$ since the spaces are nested and $m\vee \widehat m\in {\mathcal M}_{n,J}^+$ on $\Omega_{n,J}$ due to Lemma \ref{OmegaXi}, we have 
\begin{eqnarray*} && {\mathbb E}\left[ \left(\sup_{t\in S_m+S_{\widehat m}, \|t\|_{n,J,R}=1} \nu_{n,J}^2(t) - p(m,\widehat m) \right)_+\!\!{\mathbf 1}_{\Omega_{n,J}}\right] \\ &\leq & 2  \sum_{m'\in {\mathcal M}_{n,J}^+} {\mathbb E}\left[\left(\sup_{t\in S_{m\vee m'}, \|t\|_{n,J,R}=1} \nu_{n,J,1}^2(t) - \frac{p(m,m')}2 \right)_+\right] + 2 
{\mathbb E} \left[\sup_{t\in S_m+S_{\widehat m}, \|t\|_{n,J,R}=1} \nu_{n,J,2}^2(t) \right]. 
\end{eqnarray*}
Denoting by ${\mathbb E}_{{\mathbf X}}$ the conditional expectation with respect to $\Xvec$, we define for $m'\in\mathcal{M}_{n,J}^+$
\begin{align*}
{\mathbb E}_1(m'):={\mathbb E}_{{\mathbf X}}\left[\left(\sup_{t\in S_m', \|t\|_{n,J,R}=1} \nu_{n,J,1}^2(t) - 4\sigma^2 \frac{m'}{nJ} \right)_+\right]
\end{align*}
and
\begin{align*}
    {\mathbb E}_2:={\mathbb E}\left[\sup_{t\in S_m+S_{\widehat m},, \; \|t\|_{n,J,R}=1} 
\nu_{n,J,2}^2(t) \right].
\end{align*}
Then, we have that
\begin{align*}
     {\mathbb E}\left[ \left(\sup_{t\in S_m+S_{\widehat m}, \|t\|_{n,J,R}=1} \nu_{n,J}^2(t) - p(m,\widehat m) \right)_+
     \right] 
     \leq 2  \sum_{m'\in {\mathcal M}_{n,J}^+} \E[{\mathbb E}_1(m'\vee m)] + 2 {\mathbb E}_2.
\end{align*}
    Let us first bound $ {\mathbb E}_2$. First, we see that
    $${\mathbb E}_2 = \frac 1{(nJ)^2} {\mathbb E}\left[\sup_{t\in S_m+S_{\widehat m}, \;  \|t\|_{n,J,R}=1} ((\varepsilon_{\bullet,\bullet}^{(2)})^\top {\mathbf M}^\top {\mathbf R}^{-1}t({\mathbf X}_{\bullet,\bullet}))^2\right]. $$
    As for $\|t\|_{n,J,R}=1$ it holds 
    \begin{align*}
((\varepsilon_{\bullet,\bullet}^{(2)})^\top {\mathbf M}^\top {\mathbf R}^{-1}t({\mathbf X}_{\bullet,\bullet} ))^2 &\leq \| {\mathbf R}^{-1/2} {\mathbf M} \varepsilon_{\bullet,\bullet}^{(2)}\|^2_{{\mathbb R}^{nJ}} \|{\mathbf R}^{-1/2}t({\mathbf X}_{\bullet,\bullet})\|^2_{{\mathbb R}^{nJ}}\\
&=(nJ) \|{\mathbf R}^{-1/2}{\mathbf u}_{\bullet,\bullet}^{(2)}\|_{{\mathbb R}^{nJ}}^2\leq \frac{nJ}{1-\rho} \|{\mathbf u}_{\bullet,\bullet}^{(2)}\|^2_{{\mathbb R}^{nJ}}
    \end{align*}
we get, since $\varepsilon_{i,j}$ are i.i.d. for all $i=1,\ldots,n$ and $j=0,\ldots,J$, that
$${\mathbb E}_2\leq \frac 1{nJ(1-\rho)} {\mathbb E}\left[\sum_{j=1}^J \sum_{i=1}^n [\rho (\varepsilon_{0,j}^{(2)})^2+ (1-\rho) (\varepsilon_{i,j}^{(2)})^2 ]\right] \leq  \frac 1{1-\rho} {\mathbb E}
[\varepsilon_{1,1}^2{\mathbf 1}_{|\varepsilon_{1,1}|>K}].$$
Therefore, by Cauchy-Schwarz and Assumption \ref{subExp}, we obtain
    \begin{align*}
        {\mathbb E}_2\leq\frac 1{1-\rho} \sqrt{{\mathbb E}(\varepsilon_{1,1}^4){\mathbb P}(|\varepsilon_{1,1}|>K)}\leq \frac{\sqrt{c_1{\mathbb E}(\varepsilon_{1,1}^4)}}{1-\rho}e^{-c_2K/2} .
    \end{align*}
    Consequently, to obtain the wished result, we need for this term that ${\mathbb E}[\varepsilon_{1,1}^4]<+\infty$ and 
    \begin{align*}
        {\mathbb P}(|\varepsilon_{1,1}|>K) \leq Ce^{-2\log(nJ)},
    \end{align*}
and we choose 
    \begin{equation}\label{ChoixdeK}
    K=\frac{4}{c_2} \log(nJ). 
    \end{equation}

Let us turn to ${\mathbb E}_1(m)$ for some $m\in\mathcal{M}_{n,J}^+$, keeping in mind that we will change $m$ into $m\vee m'$ in the end.
We apply Talagrand inequality. For this we first calculate the quantities ${\mathbb H}^2$, $v$ and $M_1$ upper bounds on the terms ${\mathbb T}_1, {\mathbb T}_2, {\mathbb T}_3$ defined hereafter.
The first term to bound is
\begin{eqnarray*}
 {\mathbb T}_1:=   {\mathbb E}_{{\mathbf X}}\left[\sup_{t\in S_m, \|t\|_{n,J,R}=1} \nu_{n,J,1}^2(t) \right] 
\end{eqnarray*}
We denote by ${\mathbf Z}_{\bullet,\bullet} := {\mathbf R}^{-1/2} {\mathbf M} \varepsilon_{\bullet,\bullet}={\mathbf R}^{-1/2}\uvec$, with decomposition ${\mathbf Z}_{\bullet,\bullet}={\mathbf Z}_{\bullet,\bullet}^{(1)} + {\mathbf Z}_{\bullet,\bullet}^{(2)}$ following the one of $\varepsilon_{\bullet,\bullet}$. Note that the entries  ${\mathbf Z}_{i,j}^{(1)}$ of ${\mathbf Z}^{(1)}_{\bullet,\bullet}$ are centered but not uncorrelated; only the components of ${\mathbf Z}_{\bullet,\bullet}$ are, as ${\mathbf Z}_{\bullet,\bullet}=  {\mathbf R}^{-1/2}\uvec$ are such that ${\rm Var}({\mathbf Z}_{\bullet,\bullet})=\sigma^2 {\rm Id}_{nJ\times nJ}$.

By Gram-Schmidt orthonormalization, we consider $\varphi_{k,R}$, an orthonormalization of the $\varphi_k$ with respect to the scalar product $\langle \,  . \, , \, . \, \rangle_{n,J,R}$, so that $t\in S_m$ with $\|t\|_{n,J,R}=1$ can be written as
$t=\sum_{k=1}^m a_{k,R}\varphi_{k,R}$ with $\|t\|_{n,J,R}^2=\sum_{k=1}^m a_{k,R}^2=1$ and $\|\varphi_{k,R}\|_{n,J,R}^2=(nJ)^{-1}\varphi_{k,R}(\Xvec)^\top {\mathbf R}^{-1} \varphi_{k,R}(\Xvec) = 1$, for any $k\in\mathbb{N}$.
We get using Cauchy-Schwarz inequality
$$ {\mathbb T}_1\leq {\mathbb E}_{{\mathbf X}}\left[\sup_{t=\sum_{k=1}^m a_{k,R}\varphi_{k,R}, \sum_{k=1}^m a^2_{k,R}=1} \left(\sum_{k=1}^m a_{n,R}\nu_{n,J,1}(\varphi_{k,R})\right)^2\right]\leq \sum_{k=1}^m {\mathbb E}_{{\mathbf X}}\left[ \nu_{n,J,1}^2(\varphi_{k,R})\right].$$
Now, relying on the independency of the $\varepsilon_{i,j}^{(1)}$, we compute
\begin{eqnarray*} {\mathbb E}_{{\mathbf X}}\left[ \nu_{n,J,1}^2(\varphi_{k,R})\right]&=& {\mathbb E}_{{\mathbf X}}\left[\left( \frac 1{nJ} \sum_{i=0}^n \sum_{j=1}^J \varepsilon_{i,j}^{(1)} \left[M^\top R^{-1}\varphi_{k,R}(X_{\bullet, j})\right]_{i+1} \right)^2\right]
\\ &=&  \frac 1{(nJ)^2} \sum_{i=0}^n \sum_{j=1}^J {\mathbb E}[(\varepsilon_{i,j}^{(1)})^2] 
\left[M^\top R^{-1}\varphi_{k,R}(X_{\bullet, j})\right]_{i+1}^2 \\
&\leq & \frac 1{(nJ)^2} \sum_{i=0}^n \sum_{j=1}^J {\mathbb E}[\varepsilon_{i,j}^2] 
\left[M^\top R^{-1}\varphi_{k,R}(X_{\bullet, j})\right]_{i+1} ^2
\end{eqnarray*}
using that $ {\mathbb E}[(\varepsilon_{i,j}^{(1)})^2] \leq  {\mathbb E}[\varepsilon_{i,j}^2]=\sigma^2$.
Therefore 
\begin{eqnarray*}  {\mathbb E}_{{\mathbf X}}\left[ \nu_{n,J,1}^2(\varphi_{k,R})\right] &\leq &
\frac 1{(nJ)^2} \sum_{i=0}^n \sum_{j=1}^J {\mathbb E}[\varepsilon_{i,j}^2] 
\left[M^\top R^{-1}\varphi_{k,R}(X_{\bullet, j})\right]_{i+1}^2 \\ &&= {\mathbb E}_{{\mathbf X}}\left[\left(\frac 1{nJ}\sum_{i=0}^n\sum_{j=1}^J \varepsilon_{i,j}\left[M^\top R^{-1}\varphi_{k,R}(X_{\bullet, j})\right]_{i+1}\right)^2\right].
\end{eqnarray*}
We obtain
$${\mathbb T}_1\leq \sum_{k=1}^m {\mathbb E}_{{\mathbf X}}\left[ \left( \frac 1{nJ} \uvec^\top \,{\mathbf R}^{-1}\varphi_{k,R}(\Xvec)  \right)^2\right].$$
We get, by using now the non correlation property
\begin{eqnarray*}
    {\mathbb T}_1 
    &\leq & \sum_{k=1}^m  {\mathbb E}_{{\mathbf X}}\left[\left( \frac 1{nJ} \sum_{j=1}^J \sum_{i=1}^n {\mathbf Z}_{i,j} \left[R^{-1/2} \varphi_{k,R}({\mathbf X}_{\bullet,j})\right]_i\right)^2\right]  \\
    &=& \sum_{k=1}^m  \frac 1{(nJ)^2} \sum_{j=1}^J \sum_{i=1}^n {\mathbb E}_{{\mathbf X}}({\mathbf Z}_{i,j}^2)  \left[R^{-1/2} \varphi_{k,R}({\mathbf X}_{\bullet, j})\right]_i^2 \\ &=&   \frac{\sigma^2}{(nJ)^2}  \sum_{k=1}^m  \sum_{j=1}^J \sum_{i=1}^n  \left[R^{-1/2} \varphi_{k,R}({\mathbf X}_{\bullet,j})\right]_i^2. 
\end{eqnarray*}
As 
$$ \sum_{j=1}^J \sum_{i=1}^n  \left[R^{-1/2} \varphi_{j,R}({\mathbf X}_{\bullet, j})\right]_i^2= \varphi_{j,R}({\mathbf X}_{\bullet,\bullet})^\top {\mathbf R}^{-1} \varphi_{j,R}({\mathbf X}_{\bullet,\bullet}) =nJ \|\varphi_{j,R}\|_{n,J,R}^2=nJ,$$
we obtain
$${\mathbb T}_1\leq \sigma^2 \frac m{nJ} :={\mathbb H}^2.$$
Next we consider the second term useful for applying Talagrand and use the same type of trick as previously. Further, denote by ${\rm Var}_{\mathbf X}$ the variance conditional on $\Xvec$. 
\begin{eqnarray*} 
{\mathbb T}_2&:=& \sup_{t\in S_m, \|t\|_{n,J,R}=1}\frac 1{nJ} \sum_{j=1}^J\sum_{i=0}^n {\rm Var}_{\mathbf X}\left(\varepsilon_{i,j}^{(1)} [M^\top R^{-1} t({\mathbf X}_{\bullet,j})]_{i+1}\right)\\
&=& \sup_{t\in S_m, \|t\|_{n,J,R}=1}\frac 1{nJ} {\rm Var}_{\mathbf X}\left(\sum_{j=1}^J \sum_{i=0}^n \varepsilon_{i,j}^{(1)} \left[M^\top R^{-1} t({\mathbf X}_{\bullet,j})\right]_{i+1}\right) \\ &\leq & \sup_{t\in S_m, \|t\|_{n,J, R}=1}\frac 1{nJ} {\rm Var}_{\mathbf X}\left( \varepsilon_{\bullet,\bullet}^\top M^\top R^{-1} t({\mathbf X}_{\bullet,\bullet})\right)  =
\sup_{t\in S_m, \|t\|_{n,J,R}=1}\frac 1{nJ} {\rm Var}_{\mathbf X}\left({\mathbf Z}_{\bullet,\bullet}^\top  {\mathbf R}^{-1/2} t({\mathbf X}_{\bullet,\bullet})\right)  
\\ &=& \sup_{t\in S_m, \|t\|_{n,J,R}=1}\frac 1{nJ} {\rm Var}_{\mathbf X}\left(\sum_{j=1}^J \sum_{i=1}^n {\mathbf Z}_{i,j}  [R^{-1/2} t({\mathbf X}_{\bullet,j})]_i\right) \\ & = & \sigma^2 \sup_{t\in S_m, \|t\|_{n,J,R}=1}\frac{1}{nJ} \sum_{j=1}^J \sum_{i=1}^n   [R^{-1/2} t({\mathbf X}_{\bullet,j})]_i^2 = \sigma^2 :=v, 
\end{eqnarray*}
where we used that $\sum_{j=1}^J \sum_{i=1}^n   [R^{-1/2} t({\mathbf X}_{\bullet,j})]_i^2 =nJ \|t\|_{n,J,R}^2$.
 The last term is computed as follows. 
\begin{eqnarray*}
   {\mathbb T}_3&:=& \sup_{t\in S_m, \|t\|_{n,J,R}=1}\sup_{e\in {\mathbb R}} \; \sup_{j=1, \dots, J}  \sup_{i=0, \dots, n} |e{\mathbf 1}_{|e|\leq K} - {\mathbb E}(\varepsilon_{1,1}{\mathbf 1}_{|\varepsilon_{1,1}|\leq K} )| [M^\top R^{-1} t({\mathbf X}_{\bullet,j})]_{i+1}| \\ &\leq & 2K
   \sup_{t\in S_m, \|t\|_{n,J,R}=1} \; \sup_{i,j} | [M^\top R^{-1} t({\mathbf X}_{\bullet,j})]_{i+1}| 
\end{eqnarray*}
Now we write that
\begin{eqnarray*}
    | [M^\top R^{-1} t({\mathbf X}_{\bullet,j})]_{i+1}|  & \leq & 
    \sqrt{ \sum_{j=1}^J\sum_{i=0}^n [M^\top R^{-1} t({\mathbf X}_{\bullet,j})]_{i+1}^2} =\sqrt{{\rm Var}_{\mathbf X}\left( \sum_{j=1}^J\sum_{i=0}^n \varepsilon_{i,j} [M^\top R^{-1} t(X_{\bullet, j})]_{i+1}\right)} \\ &=& \sqrt{{\rm Var}_{\mathbf X}( {\mathbf Z}_{\bullet,\bullet}^\top {\mathbf R}^{-1/2} t({\mathbf X}_{\bullet,\bullet}))}= \sqrt{t(\Xvec)^\top {\mathbf R}^{-1/2} {\rm Var}({\mathbf Z}_{\bullet,\bullet}) {\mathbf R}^{-1/2}  t(\Xvec)}  \\ &=&  = \sqrt{nJ}  \|t\|_{n,J,R} = \sqrt{nJ}
\end{eqnarray*}
for $t$ such that $\|t\|_{n,J,R}=1$. Finally, we obtain the bound ${\mathbb T}_3\leq 2K\sqrt{nJ}:=M_1$.
    
Now we obtain with Talagrand inequality for any $m\in{\mathcal M}_n^+$
$${\mathbb E}_1(m) \leq \frac{C_1}{nJ} \left( e^{-C_2m} + 4K^2 e^{-C_3\sqrt{m}/(2K)}\right) .$$
Therefore, it follows for any $m\in\mathcal{M}_{n,J}$
\begin{eqnarray*} \sum_{m'\in {\mathcal M}_n^+} {\mathbb E}_1(m\vee m')  &\leq & \sum_{m'\in {\mathcal M}_n^+} {\mathbb E}_1(m')  
\leq  \frac{C_1}{nJ} \left( \sum_{m\geq 0} e^{-C_2m} + 4K^2\sum_{m\in {\mathcal M}_n^+}  e^{-C_3\sqrt{m}/(2K)}\right) \\
&\leq &  \frac{C_1}{nJ} \left(\Sigma(C_2) + 4K^2\sum_{\ell=1}^{c (nJ)^2} e^{-C_3\ell /(2K)}\right)\\
&\leq &  \frac{C_1}{nJ} \left(\Sigma(C_2) + \frac{4K^2}{1-e^{-C_3/(2K)}}\right)
\leq  \frac{C_1}{nJ} \left(\Sigma(C_2) + \frac{8}{C_3} K^3\right),
\end{eqnarray*} 
as for $x>0$, it holds that $1/(1-e^{-x})\leq 1/x$. Now, plugging the choice of $K$ given in  (\ref{ChoixdeK}) yields 
$$\sum_{m'\in {\mathcal M}_n^+} {\mathbb E}_1(m\vee m')\leq C\frac{\log^{3}(nJ)}{nJ},$$
for some constant $C$ depending on $C_1, C_2, C_3$, the constants of the Talagrand and $ c_1$ and $c_2$ the constants of Assumption \ref{subExp}.
\end{proof}

\subsection{Proof of (\ref{MajProb})}\label{sec::equ-proof}
\begin{proof}[Proof of (\ref{MajProb})]
We write that
$${\mathbb P}(|\varepsilon_{1,1}|> K)={\mathbb E}({\mathbf 1}_{|\varepsilon_{1,1}|>K})= {\mathbb E}({\mathbf 1}_{\varepsilon_{1,1}>K}{\mathbf 1}_{\varepsilon_{1,1}>0}) + {\mathbb E}({\mathbf 1}_{\varepsilon_{1,1}<-K}{\mathbf 1}_{\varepsilon_{1,1}\leq 0}) : ={\mathbb M}_1+ {\mathbb M}_2.$$
Then, for all $u\geq 0$,
$${\mathbb M}_1 \leq {\mathbb E}(e^{u\varepsilon_{1,1}-uK} {\mathbf 1}_{\varepsilon_{1,1}>0}) \leq e^{-uK+\frac{u^2s^2}2}$$
by using (\ref{HypSubGauss}). By optimization with respect to $u$, we choose $u=K/s^2$ and obtain ${\mathbb M}_1\leq e^{-K^2/(2s^2)}$.
Similarly, for $u<0$, ${\mathbb M}_2\leq e^{uK+u^2s^2/2}$ and choosing $u=-K/s^2$ yields ${\mathbb M}_2\leq e^{-K^2/(2s^2)}$, and the result follows. 
\end{proof}

\section*{Appendix: auxiliary results}
The following result of \cite{tropp2012user} is used.
\begin{lem}[Tropp Chernov]\label{lem::tropp-chernoff}
    Consider a fixed sequence $(\bm X_k)$ of independent, random, self-adjoint matrices with dimension $d$. Let $\lambda_{\max}(\bm X_k)$ and $\lambda_{\min}(\bm X_k)$ denote the largest and smallest eigenvalue of $\bm X$, respectively. Further, let $\preccurlyeq$ denote the semi-definite order on self-adjoint matrices.  Assume that each random matrix satisfies
    \begin{align*}
        \bm X_k \preccurlyeq 0 \quad\text{ and }\quad \lambda_{\max}(\bm X_k) \leq R, \quad\text{ almost surely.}
    \end{align*}
    Define 
    \begin{align*}
        \mu_{\min} := \lambda_{\min}\left(\sum_{k\in\mathbb{N}} \E [\bm X_k]\right) \quad\text{ and }\quad  \mu_{\max} := \lambda_{\max}\left(\sum_{k\in\mathbb{N}} \E [\bm X_k]\right).
    \end{align*}
    Then, 
    \begin{align*}
        \mathbb{P} \left(\lambda_{\min}\left(\sum_{k\in\mathbb{N}} \bm X_k\right) \leq (1-\delta)\mu_{\min}\right)&\leq d \cdot \left(\frac{e^{-\delta}}{(1-\delta)^{1-\delta}}\right)^{\mu_{\min}/R}, \quad\text{ for }\delta\in[0,1], \text{ and}\\
        \mathbb{P} \left(\lambda_{\max}\left(\sum_{k\in\mathbb{N}} \bm X_k\right) \leq (1 + \delta)\mu_{\max}\right)&\leq d \cdot \left(\frac{e^{\delta}}{(1-\delta)^{1+\delta}}\right)^{\mu_{\max}/R}, \quad\text{ for }\delta\geq 0.
    \end{align*}
\end{lem}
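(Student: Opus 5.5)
The plan is to follow the matrix Laplace transform method of \cite{tropp2012user}. I will read the statement with its evident intended hypotheses: $\bm X_k \succcurlyeq 0$ (positive semidefinite, not $\preccurlyeq 0$), and the upper-tail event $\lambda_{\max}(\sum_k \bm X_k)\geq (1+\delta)\mu_{\max}$ with bound $d\,(e^{\delta}/(1+\delta)^{1+\delta})^{\mu_{\max}/R}$. Replacing $\bm X_k$ by $\bm X_k/R$ rescales $\mu_{\min},\mu_{\max}$ by $1/R$, so I first reduce to $R=1$, i.e.\ $0\preccurlyeq \bm X_k\preccurlyeq {\rm Id}_d$.

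\textbf{Step 1 (Laplace transform bound).} Write $\bm Y=\sum_k \bm X_k$. For $\theta>0$, Markov's inequality, the spectral mapping $e^{\theta\lambda_{\max}(\bm Y)}=\lambda_{\max}(e^{\theta \bm Y})$, and $\lambda_{\max}\leq \operatorname{tr}$ on positive definite matrices give
\[
\mathbb P(\lambda_{\max}(\bm Y)\geq t)\leq e^{-\theta t}\,\E\operatorname{tr} e^{\theta \bm Y}.
\]
Symmetrically, for the lower tail I use $\mathbb P(\lambda_{\min}(\bm Y)\leq t)\leq e^{\theta t}\,\E\operatorname{tr} e^{-\theta \bm Y}$ for $\theta>0$.

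\textbf{Step 2 (subadditivity of matrix cumulant generating functions).} I will show that
\[
\E\operatorname{tr}\exp\Big(\sum_k \theta \bm X_k\Big)\leq \operatorname{tr}\exp\Big(\sum_k \log \E e^{\theta \bm X_k}\Big).
\]
This follows by induction on the summands: condition on all but one summand, and apply Lieb's theorem, which states that $A\mapsto \operatorname{tr}\exp(H+\log A)$ is concave on positive definite matrices. Jensen's inequality then lets me move the expectation inside the logarithm one summand at a time. I expect this to be the main obstacle. The scalar trick $\E e^{\sum}=\prod \E e^{\cdot}$ fails because the matrices do not commute, so Lieb's concavity theorem is essential; I will simply invoke it as a known deep result rather than reprove it.

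\textbf{Step 3 (semidefinite bound on each cumulant generating function).} Since $x\mapsto e^{\theta x}$ is convex on $[0,1]$, we have $e^{\theta x}\leq 1+(e^{\theta}-1)x$ there. The transfer rule for functions of a self-adjoint matrix then gives $e^{\theta \bm X_k}\preccurlyeq {\rm Id}+(e^{\theta}-1)\bm X_k$, and hence $\E e^{\theta \bm X_k}\preccurlyeq {\rm Id}+g(\theta)\E\bm X_k$ with $g(\theta)=e^\theta-1$. This holds for every real $\theta$, and $g(\theta)<0$ when $\theta<0$. Because $\log$ is operator monotone and $\log(1+x)\leq x$, I get $\log\E e^{\theta\bm X_k}\preccurlyeq g(\theta)\E\bm X_k$. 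Since $\operatorname{tr}\exp$ is monotone in the semidefinite order, summing over $k$ gives
\[
\operatorname{tr}\exp\Big(g(\theta)\sum_k\E\bm X_k\Big)\leq d\,\exp\Big(\lambda_{\max}\big(g(\theta)\textstyle\sum_k\E \bm X_k\big)\Big).
\]
This equals $d\,e^{g(\theta)\mu_{\max}}$ for $\theta>0$. For $-\theta$ with $\theta>0$ it equals $d\,e^{g(-\theta)\mu_{\min}}$, since $g(-\theta)<0$ turns $\lambda_{\max}$ into $\mu_{\min}$.

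\textbf{Step 4 (optimisation over $\theta$).} For the upper tail, take $t=(1+\delta)\mu_{\max}$ and $\theta=\log(1+\delta)$. This gives
\[
d\exp\big(\delta\mu_{\max}-(1+\delta)\log(1+\delta)\mu_{\max}\big)=d\,\big(e^{\delta}/(1+\delta)^{1+\delta}\big)^{\mu_{\max}}.
\]
For the lower tail, take $t=(1-\delta)\mu_{\min}$ and $\theta=-\log(1-\delta)$, so that $g(-\theta)=-\delta$. This gives $d\,\big(e^{-\delta}/(1-\delta)^{1-\delta}\big)^{\mu_{\min}}$; the case $\delta=1$ follows by continuity. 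Undoing the rescaling replaces the exponents $\mu_{\max}$ and $\mu_{\min}$ by $\mu_{\max}/R$ and $\mu_{\min}/R$, which concludes the proof.
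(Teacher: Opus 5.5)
Your proposal is correct. You also read the misprinted statement the right way: positive semidefinite $\bm X_k$, and the upper-tail event $\lambda_{\max}(\sum_k\bm X_k)\geq(1+\delta)\mu_{\max}$ with base $e^{\delta}/(1+\delta)^{1+\delta}$, which for the finite family of $nJ$ matrices is the form used in Lemma~\ref{Tropp}. The paper gives no proof of its own and only cites \cite{tropp2012user}; your argument (matrix Laplace transform, subadditivity of matrix cumulant generating functions via Lieb's concavity theorem, the bound $\log\E e^{\theta\bm X_k}\preccurlyeq(e^{\theta}-1)\E\bm X_k$, and the choices $\theta=\log(1+\delta)$ and $\theta=-\log(1-\delta)$) is exactly Tropp's proof of the matrix Chernoff inequality.
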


For the proof of Proposition \ref{prop::new-prop3} we use the following helpful result, which can be found e.g. in \cite{Bartlett1951}.

\begin{lem}[Sherman-Morrison formula]\label{lem::sherman-morrison}
    Let $A\in\Rz^{n\times n}$ be an invertible square matrix and $u,v\in\Rz^n$. Then $A + uv^\top$ is invertible if and only if $1 + v^\top A^{-1} u \not= 0$. In this case, it holds
    \begin{align*}
        \left(A + uv^\top\right)^{-1} = A^{-1} - \frac{A^{-1} u v^\top A^{-1}}{1 + v^\top A^{-1} u}.
    \end{align*}
\end{lem}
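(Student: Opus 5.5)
The plan is to prove the two directions separately, using only elementary matrix algebra and the invertibility of $A$. Throughout, set $\lambda := 1 + v^\top A^{-1} u \in \Rz$, which is a scalar. This is the one structural fact that makes every computation collapse.

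For the direction ``$\lambda \neq 0$ implies invertibility together with the formula'', I would define the candidate
\begin{align*}
B := A^{-1} - \lambda^{-1} A^{-1} u v^\top A^{-1}
\end{align*}
and check directly that $(A + uv^\top) B = {\rm Id}_n$. Expanding the product gives four terms:
\begin{align*}
(A+uv^\top)B = {\rm Id}_n + uv^\top A^{-1} - \lambda^{-1} u v^\top A^{-1} - \lambda^{-1} u (v^\top A^{-1} u) v^\top A^{-1}.
\end{align*}
Since $v^\top A^{-1}u = \lambda - 1$ is a scalar, the last three terms combine to $uv^\top A^{-1}\bigl(1 - \lambda^{-1} - \lambda^{-1}(\lambda-1)\bigr) = 0$. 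A one-sided inverse of a square matrix is a two-sided inverse, so $A+uv^\top$ is invertible with inverse $B$, which is exactly the stated formula.

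For the converse, suppose $\lambda = 0$. Then $u \neq 0$, since otherwise $\lambda = 1$. Hence $w := A^{-1}u \neq 0$, because $A$ is invertible. I then compute
\begin{align*}
(A + uv^\top) w = u + u (v^\top A^{-1} u) = \lambda u = 0.
\end{align*}
So $A+uv^\top$ has a nontrivial kernel and is not invertible. This gives the ``only if'' direction.

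I do not expect any real obstacle. The only points needing care are treating $v^\top A^{-1}u$ as a scalar so that it commutes out of the products, and noting that $u \neq 0$ in the converse so that the kernel vector $w$ is nonzero.
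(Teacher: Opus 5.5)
Your proof is correct and complete. For $\lambda\neq 0$ the product $(A+uv^\top)B$ collapses to ${\rm Id}_n$ exactly as you show, and for $\lambda=0$ the nonzero vector $A^{-1}u$ lies in the kernel of $A+uv^\top$.

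The paper gives no proof of its own for this lemma; it quotes it as a standard tool from \cite{Bartlett1951}. It only uses the ``if'' direction, in the proof of Proposition~\ref{prop::new-prop3}, where $1+d_n u_m^\top\Psi_m^{-1}u_m\geq 1$. Your direct verification is the standard argument that this citation stands in for.
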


\bibliography{bib-common-noise}
\end{document}